\theoremstyle{plain}
\newtheorem{theorem}{Theorem}[section]
\newtheorem{lemma}[theorem]{Lemma}
\newtheorem{proposition}[theorem]{Proposition}
\theoremstyle{definition}
\newtheorem{definition}[theorem]{Definition}
\newtheorem{example}[theorem]{Example}
\newtheorem{remark}[theorem]{Remark}
\begin{document}

\renewcommand{\datename}{}

\renewcommand{\abstractname}{Abstract}
\renewcommand{\refname}{References}
\renewcommand{\tablename}{Table}
\renewcommand{\figurename}{Figure}
\renewcommand{\proofname}{Proof}

\title[Symplectic invariants of parabolic orbits and flaps]{$C^\infty$ symplectic invariants of parabolic orbits and flaps in 
integrable Hamiltonian systems}

\author{E. Kudryavtseva$^{\diamond,\star}$}

\author{N. Martynchuk$^{\dagger, \star}$}

\thanks{
Affiliations:\\
$^\diamond$ Faculty of Mechanics and Mathematics, Moscow State University, Moscow 119991, Russia \\
$^\star$ Moscow Center of Fundamental and Applied Mathematics, Moscow, Russia \\
$^\dagger$ Bernoulli Institute for Mathematics, Computer Science and Artificial Intelligence, University of Groningen, P.O. Box 407, 9700 AK Groningen, The Netherlands. \\
\textit{E-mail: eakudr@mech.math.msu.su, n.martynchuk@rug.nl}}

  \begin{abstract}
In the present paper, we consider a smooth $C^\infty$ symplectic classification of Lagrangian fibrations near cusp 
singularities, parabolic orbits and cuspidal tori. We show that for these singularities as well as for an arrangement of
singularities known as a flap, which arises in the integrable subcritical Hamiltonian Hopf bifurcation, the action variables  
form a complete set of $C^\infty$ symplectic invariants. We also give a symplectic classification for parabolic orbits 
in the real-analytic case. Namely, we prove that a complete symplectic invariant in this case is given by a real-analytic function germ in two variables. Additionally, we construct several 
symplectic normal forms in the $C^\infty$ and/or real-analytic categories, including real-analytic right and 
right-left symplectic normal forms for parabolic orbits.

\hspace{-5.7mm} \textit{Keywords}: Liouville integrability; 
Moser's path method; Symplectic geometry; integrable Hamiltonian Hopf bifurcation.
 
\end{abstract}

\maketitle

\section{Introduction and summary of the results}

In this work, we study
the problem of symplectic classification of  integrable systems. Recall that an integrable system is specified by a triple $(M, \omega, F)$, where $(M,\omega)$ is a symplectic manifold and
$$
F = (F_1 = H, \ldots, F_n) \colon M \to \mathbb R^n
$$
is an \textit{integral} (also called an \textit{energy-momentum}) \textit{map}, 
consisting of $n$ almost everywhere independent functions $F_i$ 
that pairwise Poisson commute:
$$\omega(X_{F_i}, X_{F_j})= 0 \quad \mbox{for all} \quad i, j = 1, \ldots, n,$$ 
where $X_{F_i}$ is defined by the rule $\omega(X_{F_i}, \cdot) = - \textup{d}F_i.$ The function $F_1 = H$ is typically the Hamiltonian of the system,  generating the dynamics via the Hamiltonian vector field $X_H$, and the number $n$ is by definition the number of degrees of freedom. 

The integral map $F$ naturally gives rise to a (singular) Lagrangian fibration on $M$ whose fibers are connected
components of the common level sets 
$$F^{-1}(f) := \{F_1 = f_1, \ldots, F_n = f_n\}, \quad f = (f_1, \ldots, f_n) \in \mathbb R^n.$$
One can also write this fibration as the quotient map $\mathcal F \colon M \to B,$ where $B$ is 
the set of connected components of $F^{-1}(f), \ f \in \mathbb R^n,$ equipped with the quotient topology 
\cite{Fomenko1988}. The space $B$ is usually referred to as
the 
\textit{bifurcation complex}   (or the \textit{unfolded momentum domain}) of the system. Note that all of the fibers of
$\mathcal F \colon M \to B$ are invariant under the flows of the Hamiltonian vector fields $X_{F_1}, \ldots, X_{F_n}.$

The problem of
symplectic classification of integrable systems amounts to classifying the corresponding singular Lagrangian fibrations $\mathcal F \colon M \to B$ up to a `symplectic equivalence'. Usually, one considers 
`fiberwise' (= \textit{right-left}) \textit{symplectic equivalence}, where two such fibrations are called equivalent if they are related by a symplectomorphism
sending fibers to fibers. Following \cite{DeVerdiere1979, Francoise1988, Guglielmi2018, Bolsinov2018}, we shall
also consider the \textit{right symplectic equivalence} in this paper.

\begin{definition} \label{definition/equivalence}
Two integrable Hamiltonian systems $(M, \omega, F)$ and $(\tilde M, \tilde{\omega}, \tilde{F})$ are called
\textit{(right-left) symplectically equivalent} if there exists a 
 symplectomorphism 
$$\Phi \colon  (M,\omega) \to (\tilde M, \tilde{\omega})$$ 
and a homeomorphism $g \colon \tilde B \to B$ such
that 
$
\mathcal F = g \circ \tilde{\mathcal F} \circ \Phi.
$

Similarly, two integrable Hamiltonian systems $(M, \omega, F)$ and $(\tilde M, \tilde{\omega}, \tilde F)$ are called  \textit{right symplectically equivalent} if there exists a 
 symplectomorphism 
$$\Phi \colon  (M,\omega) \to (\tilde M, \tilde{\omega})$$ 
such
that 
$
F = \tilde{F} \circ \Phi$. 

In the real-analytic case, the definitions are similar and we require that $\Phi$ is a real-analytic
diffeomorphism; we refer to these equivalences as \textit{real-analytic right-left (resp., right) symplectic equivalence}.
\end{definition}
\begin{remark}
The above notion of the right symplectic equivalence is quite natural and was considered previously 
for example in \cite{DeVerdiere1979, Francoise1988, Guglielmi2018, Bolsinov2018}. It can also serve as an intermediate step in the right-left
symplectic classification problem \cite{Bolsinov2018}, and this is the point of view that we shall adopt in this work.
As a side remark, we note that it can happen that the right and right-left \textit{topological} equivalences of smooth maps actually
coincide \cite{Saeki1989, Kudryavtseva2011, Martynchuk2015_2} 
(a simple example is given 
by germs of holomorphic Morse functions $f \colon \mathbb C^n \to \mathbb C$ at critical points).
\end{remark}

Finally, we note that symplectic classification of integrable systems (both in the right and right-left cases)  can be approached from the 

i) local (in a neighborhood of a singular point or a singular orbit),

ii) semilocal (in a saturated neighborhood of a singular fiber)

iii) or global 
perspectives. \\
For a more in depth introduction to this and related problems, we refer the reader to
\cite{Bolsinov2004, Bolsinov2006}.

A natural set of symplectic invariants of an integrable system is given by its action variables. Assume for the moment that all of the fibers $F^{-1}(f)$ are compact and connected. Then the action variables can be defined
by the Mineur--Arnold formula
$$
I_i = \dfrac{1}{2\pi} \int_{\gamma_i} \alpha, \quad \textup{d}\alpha = \omega, 
$$
where  
$\gamma_i$ are independent homology cycles on a regular fiber $F^{-1}(f)$ continuously depending on $f$ (recall that regular, compact and connected fibers $F^{-1}(f)$ are $n$-dimensional tori by the Arnol'd--Liouville theorem \cite{Arnold1968, Arnold1978}). Any symplectomorphism $\Phi \colon  M \to \tilde M$ that respects the fibrations induced by the integral maps $F$ and $\tilde{F}$ must send the set of action variables
of $(M, \omega, F)$ to some set of action variables of $(\tilde M, \tilde{\omega}, \tilde{F})$. Therefore, the action variables
 are indeed symplectic invariants (up to affine $\mathbb R^n \rtimes \textup{SL}(n,\mathbb Z)$ transformations).

It is known that in many cases, action variables form a \textit{complete} (in the sense of \cite{Bolsinov2018_2}, see also
Theorem~\ref{Main_theorem}) set of symplectic invariants 
(but not always;
see for example the classical work by Duistermaat \cite{Duistermaat1980}). The well
known such examples include 
the so-called \textit{toric systems} (Delzant's theorem \cite{Delzant1988}), focus-focus singularities \cite{Vu-Ngoc2003}
(see also \cite{Pelayo2009, Pelayo2011}),
and 
simple Morse functions $H \colon M^2 \to \mathbb R$ on compact symplectic $2$-surfaces \cite{Vu-Ngoc2011, Dufour1994}.
In this connection, \cite{Bolsinov2018_2} posed the problem  of proving the `completeness' property of action variables for 
a larger class of (singularities of) integrable systems. Since then, a major progress has been made in
the real-analytic setting. In particular, it has been shown in 
\cite{Bolsinov2018} that for
the so-called parabolic orbits and cuspidal tori, which are the simplest examples
of degenerate singularities of two-degree of freedoms systems, action variables are
the only symplectic invariants in the real-analytic category. 
One more case where such a situation occurs (in the real-analytic category) are non-degenerate semilocal singularities satisfying a connectedness 
condition, as has recently been established 
 in \cite{KudryavtsevaOshemkov2021}.

All this leaves open the question of what happens in the smooth $C^\infty$ case. 
We note that this case is different from the analytic situation since some of the crucial 
analytic techniques, such as Hartogs's extension theorem, are no longer available. 
Nonetheless, as it turns out, at least for parabolic orbits and cuspidal tori, the completeness of
the action variables can be established also in the smooth $C^\infty$ category. This is the main contribution of the present work.

Our first central theorem concerns
parabolic orbits (see e.g. \cite{Lerman1994, Efstathiou2012, Bolsinov2018}
for a background material). It shows that in the  $C^\infty$ category, the action variables corresponding to the family of vanishing cycles and to the free Hamiltonian circle action form a complete set of   symplectic invariants in a neighborhood of such an orbit.
More specifically, we prove  (see Section~\ref{sec/cusp}, Theorem~\ref{theorem/parabolic_classification_c})
the following statement.

\begin{theorem} \label{theorem/parabolic_classification}
Consider a pair of two-degree of freedom $C^\infty$  integrable systems $
F_i \colon M_i \to \mathbb R^2, \ i = 1, 2,
$
with periodic orbits $\alpha_i$ of parabolic type. Let $V_i \simeq D^3 \times S^1$ be a sufficiently small neighbourhood of $\alpha_i$. Consider the functions $H_i,J_i$ and $C^\infty$ smooth coordinates $(x_i,y_i,\lambda_i, \varphi_i) \in D^3 \times S^1$ on $V_i$, with $\alpha_i=(0,0,0)\times S^1$, such that 

\begin{itemize}
\item[\rm i)] $H_i = x_i^2 - y_i^3 + \lambda_i y_i$ and $J_i = \lambda_i$ are constant on the connected components of $F_i^{-1}(f)$, moreover $H_i$ is a function of $F_i$;

\item[\rm ii)] $J_i$ is a smooth $2\pi$-periodic first integral\footnote{The existence of a
smooth $2\pi$-periodic integral $J$ and such a smooth preliminary normal form i)--ii) is shown in 
\cite{KudryavtsevaMartynchuk2021} (for the analytic case, see \cite{Zung2000} and \cite{Bolsinov2018}).}.

\end{itemize}
Finally, consider the action variable $I_i^\circ$ on the swallow-tail domain
$$
D_i = F_i(\{\lambda_i > 0,\ H_i^2 < 4(\lambda_i/3)^3,\ y_i < \sqrt{\lambda_i/3}\})
\subset \mathbb R^2
$$
corresponding to the family of vanishing cycles
$\{H_i = \mathrm{const}, \lambda_i = \mathrm{const}, \varphi_i = \mathrm{const}, y_i < \sqrt{\lambda_i/3}\}$ 
such that $I_i^\circ>0$ and $I_i^\circ(f)\to0$ as $f\to F_i(\alpha_i)$.
Then $F_1$ and $F_2$ are symplectically equivalent near the parabolic orbits $\alpha_1$ and $\alpha_2$, if and only if there exists a diffeomorphism germ $g \colon (\mathbb R^2,F_1(\alpha_1)) \to (\mathbb R^2,F_2(\alpha_2))$ that respects the swallow-tail domains, 
$g(D_1)=D_2$, and makes the actions equal,
$I^\circ_1 = I^\circ_2 \circ g  \mbox{ and } J_1 = J_2 \circ g$ on $D_1$.
\end{theorem}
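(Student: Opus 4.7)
\emph{Proof plan.} The (only if) direction is standard: a fiberwise symplectomorphism intertwines the integral maps up to a base diffeomorphism $g$, and since Mineur--Arnold action variables are preserved by such symplectomorphisms, while $I^\circ_i$ and $J_i$ are uniquely pinned down by their defining properties (the sign and boundary behaviour of $I^\circ_i$ at $F_i(\alpha_i)$, and the $2\pi$-periodicity of the $J_i$-flow), one obtains $I^\circ_1 = I^\circ_2 \circ g$ and $J_1 = J_2 \circ g$ on $D_1$.

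For the (if) direction, use hypotheses i)--ii) to identify $V_1$ with $V_2$ via the normal-form coordinates $(x,y,\lambda,\varphi)$, so that $H_1 = H_2 = x^2 - y^3 + \lambda y$ and $J_1 = J_2 = \lambda$ become identical functions on $D^3 \times S^1$. After post-composing $F_2$ with $g^{-1}$ on the target, we may assume $F_1 = F_2 = F$ and $I^\circ_1 = I^\circ_2$. The problem reduces to constructing a diffeomorphism $\Phi$ of $D^3 \times S^1$ with $\Phi^* F = F$ and $\Phi^* \omega_2 = \omega_1$ for two symplectic forms $\omega_1,\omega_2$ sharing the pair of integrals $F$. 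Apply Moser's path method to $\omega_t := (1-t)\omega_1 + t\omega_2$: writing $\omega_2 - \omega_1 = d\theta$ and solving $\iota_{X_t}\omega_t = -\theta$, the time-one flow $\Phi := \Phi_1$ satisfies $\Phi^*\omega_2 = \omega_1$, and it preserves every fiber of $F$ precisely when the restriction $\theta|_{F^{-1}(f)}$ vanishes on each fiber.

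On a regular Liouville torus $T$ inside the swallow-tail domain, $\theta|_T$ is closed and hence determined modulo exact forms by its periods over a basis of $H_1(T)$. Choose this basis to consist of the vanishing cycle $\gamma^\circ$ and the $J$-orbit $\gamma^J$. The Mineur--Arnold formula then yields
$$
\int_{\gamma^\circ}\theta = 2\pi(I^\circ_2 - I^\circ_1) = 0, \qquad \int_{\gamma^J}\theta = 2\pi(J_2 - J_1) = 0,
$$
by the hypothesis. Subtracting from $\theta$ a fiberwise-exact correction (constructed by averaging along each fiber) we achieve $\theta|_T = 0$ on every regular torus in the swallow-tail domain.

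The main obstacle is to extend this fiberwise vanishing smoothly across the singular locus --- in particular across the parabolic orbit and the adjacent cuspidal fibers --- so that Moser's vector field $X_t$ becomes globally smooth and tangent to all fibers of $F$. In the real-analytic setting of \cite{Bolsinov2018} this relies on Hartogs-type extension, which is unavailable in the $C^\infty$ category; the plan is to replace it by three ingredients. \textbf{(a)} First average $\omega_1,\omega_2$ and $\theta$ over the Hamiltonian $S^1$-action generated by $J$, so that all data become $S^1$-invariant. \textbf{(b)} Then perform $S^1$-symplectic reduction by $J$: for each value of $\lambda$ one obtains a two-dimensional problem for the cusp Hamiltonian $h_\lambda(x,y) = x^2 - y^3 + \lambda y$ on the reduced symplectic surface. \textbf{(c)} Apply the $C^\infty$ symplectic classification of cusp singularities of a function on a two-dimensional surface --- the planar analog of the present theorem, to be proved via Moser's method on a disc together with a flatness argument at the cusp point --- parametrically in $\lambda$, and lift the resulting smooth $\lambda$-family of planar symplectomorphisms back to $V_1 \simeq D^3 \times S^1$ by means of the $J$-action to obtain the required $\Phi$.
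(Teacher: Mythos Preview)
Your reduction strategy --- averaging over the $J$-action and passing to the parametric 2D problem for $h_\lambda = x^2 - y^3 + \lambda y$ --- is exactly the paper's approach: the paper packages your (a)--(b) by bringing $\omega$ to the form $g(x,y,\lambda)\,\textup{d}x\wedge \textup{d}y + \textup{d}\lambda\wedge(\textup{d}\varphi + \ldots)$ (cf.\ \cite{Bolsinov2018}) and then invokes its Theorem~\ref{theorem_1}, which is precisely your step (c). So at the level of outline the proposal is correct and coincides with the paper.

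The gap is that step (c) carries the entire technical content, and ``Moser's method on a disc together with a flatness argument at the cusp point'' does not describe a technique that actually works here. The paper's proof of Theorem~\ref{theorem_1} is not a flatness argument. It (i) reduces to $g$ even in $x$ by an explicit $u$-move; (ii) Taylor-expands the sought solution $u = x\,a(x^2,y,\lambda)$ in $x^2$ to order $N$ and solves a first-order recursion for the coefficients $a_k(y,\lambda)$, each step of which requires smooth division by $3y^2 - \lambda$ (Hadamard/Malgrange) --- this already removes all $\lambda$-dependence from the principal part; and (iii) for the order-$x^{2N+2}$ remainder, writes $u$ as an explicit integral along level curves (Eq.~\eqref{formula/for_u}) and proves it is $C^N$ by controlling the jump of the lower integration limit across the critical set $3f^2 = \lambda$, which is exactly where the equality of vanishing-cycle actions is used. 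The crux is obtaining smoothness of $u$ \emph{uniformly in $\lambda$} across $\lambda = 0$, where the two Morse critical points of $h_\lambda$ collide into the cusp; no off-the-shelf flatness or Malgrange-type statement delivers this, and you should expect to need this explicit Taylor-plus-integral analysis or something equivalent.
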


\begin{remark} \label{remark/affine}
Theorem~\ref{theorem/parabolic_classification} can be strengthened as follows (this can be shown using the smoothness of $J$ \cite{KudryavtsevaMartynchuk2021} and non-differentiability of $I^\circ$ on the closure of the swallow-tail domain for $J>0$): symplectically different parabolic orbits are classified by the inequivalent integer
affine structures on the swallow-tail domain
$$
D =  {\{(h,\lambda) \colon \lambda > 0,\ h^2 < 4(\lambda/3)^3\}} 
\subset \mathbb R^2
$$
intersected with a small neighbourhood of the origin, 
where two integer affine structures are equivalent if they can be obtained from one another by a
diffeomorphism germ $g \colon \mathbb R^2 \to \mathbb R^2$ at $0$ respecting the domain $D$. See Figure~\ref{figure/all}, where
the swallow-tail domain $D,$ the \textit{bifurcation diagram} (i.e., the set of the critical values of $(H,J)$), and 
the singular Lagrangian
fibration
given by $(H,J)$ are shown.

\end{remark}

\begin{figure}[htbp]
\begin{center}
\includegraphics[width=0.98\linewidth]{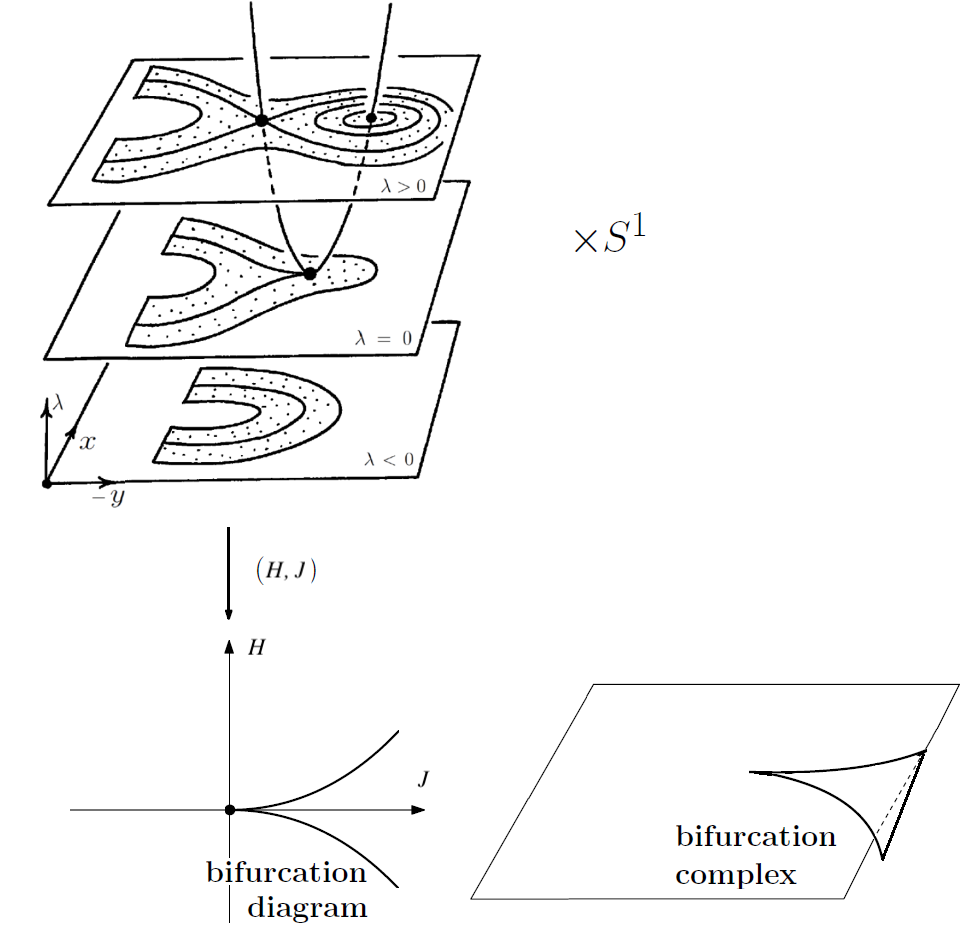}
\caption{The singular Lagrangian
fibration (top), the bifurcation diagram (bottom left) and the bifurcation complex (bottom right) of the energy-momentum map $(H,J)$. The vanishing cycles are small loops
around the `right' branch of the parabola in the 
top figure. The swallow-tail domain $D$ is the region in the bottom left figure
to the right of the semicubical parabola (= the bifurcation diagram).}
\label{figure/all}
\end{center}
\end{figure}

We note that Theorem~\ref{theorem/parabolic_classification} is a result about
smooth $C^\infty$ equivalence, but the method of proof works also in the analytic setting. It is different from the
proof given in \cite{Bolsinov2018}, based on an analytic extension, and allows us to obtain (see Section~\ref{sec/cusp}, Theorem~\ref{theorem_parabolic_normal_form}) the following analytic
normal form result classifying parabolic orbits up to the (real-analytic) right symplectic equivalence.

\begin{theorem} \label{theorem_analytic_right_intro}
Consider a parabolic orbit $\alpha$ of a real-analytic integrable system 
$F = 
(F_1,F_2) 
\colon M \to \mathbb R^2.$ Let the functions $H, J$ and coordinates 
$(x,y,J = \lambda, \varphi)$ be as in Theorem~\ref{theorem/parabolic_classification}, but now real-analytic. 
Then, up to the real-analytic right symplectic equivalence, the  
functions $H, J$ and the symplectic structure $\omega$ 
have the following local normal form near $\alpha$:
$$
H = x^2 - y^3 + \lambda y, \quad J = \lambda, \qquad
\omega_{norm} = c(x^2,y) \textup{d}x \wedge \textup{d}y + \textup{d} J \wedge \textup{d} \varphi,
$$
for some uniquely defined real-analytic function germ $c = c(x^2,y)$ with $c(0,0) > 0.$ Moreover, if $\partial_{F_2}J(F(\alpha))
\ne0$ and the function germ $c = c(x^2,y)$ corresponds to the (uniquely defined) function $H$ having the form $H = \frac{\pm F_1-a(J)}{b^{3/2}(J)}$ (see \cite[Sec.~2]{Bolsinov2018} for an explicit construction of such $H$), then 
the triple $(\Sigma, J, c)$ formed by the local bifurcation diagram $\Sigma$ of $F$ at $\alpha$ and the germs 
$J = J(F_1,F_2)$ 
and $c = c(x^2,y)$ (at $F(\alpha)$ and $(0,0)$, resp.)
classifies the singularity at $\alpha$ up to the real-analytic right symplectic equivalence.
\end{theorem}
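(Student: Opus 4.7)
The strategy has three stages: reduce the symplectic form using the $S^1$-action generated by $J$, normalize the density on the resulting two-dimensional slices, and deduce uniqueness from the action variable.

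\emph{Stage 1.} Since $J = \lambda$ is a $2\pi$-periodic integral with Hamiltonian vector field $\partial_\varphi$, one has $\iota_{\partial_\varphi}\omega = -d\lambda$ and $\omega$ is $S^1$-invariant. The form $\omega - d\lambda \wedge d\varphi$ is closed, $S^1$-basic, and $S^1$-invariant, so it descends to the quotient by the circle action. A real-analytic equivariant Moser/Darboux argument, using the freeness of the action near $\alpha$, then produces new analytic coordinates $(x, y, \lambda, \varphi)$ still satisfying $H = x^2 - y^3 + \lambda y$ and $J = \lambda$, in which
\[
\omega = \tilde c(x, y, \lambda)\, dx \wedge dy + d\lambda \wedge d\varphi, \qquad \tilde c(0, 0, 0) > 0.
\]

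\emph{Stage 2.} Any further symplectomorphism preserving $H, J$ and the $d\lambda \wedge d\varphi$ block has the form $\Phi(x, y, \lambda, \varphi) = (X(x, y, \lambda), Y(x, y, \lambda), \lambda, \varphi + \psi(x, y, \lambda))$, where $(X, Y)$ is an $H_\lambda$-preserving analytic diffeomorphism of the reduced $(x, y)$-slice; once $(X, Y)$ is fixed, $\psi$ is determined (up to a function of $\lambda$) by the compatibility equations coming from closedness of $\omega$. It therefore suffices to construct $(X, Y)$ so that the pushed-forward density takes the form $c(X^2, Y)$. First, the involution $\sigma : x \mapsto -x$ preserves $H$ and $J$, and the odd-in-$x$ component of $\tilde c$ contributes nothing to the action integral along the symmetric vanishing cycle; a Moser deformation inside the group of $H$-preserving maps kills this odd component, reducing us to $\tilde c = \tilde c_e(x^2, y, \lambda)$. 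Second, the residual $\lambda$-dependence is removed by invoking the 2D classification of $H_\lambda$-preserving symplectic densities on the reduced slice by their action function: for each $\lambda$ there is an analytic $H_\lambda$-preserving diffeomorphism taking $\tilde c_e(\cdot,\cdot,\lambda)$ to a uniquely determined $\lambda$-independent density $c(x^2, y)$ with the same action. The main technical point is to assemble these slicewise transformations analytically in $\lambda$, including through the singular cusp value $\lambda = 0$.

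\emph{Stage 3.} In the normal form, the action variable on the swallow-tail domain reads
\[
2\pi I^\circ(h, \lambda) = \iint_{\Omega(h, \lambda)} c(x^2, y)\, dx\, dy, \qquad \Omega(h, \lambda) = \{H \leq h,\ y < \sqrt{\lambda/3}\}.
\]
The substitution $s = x^2$ followed by differentiation in $h$ converts this into an Abel-type integral equation for $c(s, y)$ which has a unique real-analytic solution. Consequently any right symplectic equivalence between two normal forms with densities $c_1, c_2$ forces $I^\circ_1 = I^\circ_2$, and hence $c_1 = c_2$, proving uniqueness. Finally, the bifurcation diagram $\Sigma$ and the germ $J(F_1, F_2)$ rigidly fix the preliminary normal form through the explicit formula $H = (\pm F_1 - a(J))/b^{3/2}(J)$ recalled in the statement, so the triple $(\Sigma, J, c)$ is indeed a complete invariant of the real-analytic right symplectic equivalence class. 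The principal difficulty lies in the real-analytic $\lambda$-assembly of Stage 2, which is resolved by combining the Abel inversion above with a careful analysis of the vanishing cycle geometry near the cusp.
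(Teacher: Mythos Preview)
Your outline correctly identifies the overall shape of the problem, but the heart of the matter, your Stage~2, is precisely where the paper does something substantially different, and it is also where your proposal has a genuine gap.

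You propose to normalize the density slicewise in $\lambda$ (using the 2D classification of $H_\lambda$-preserving symplectic forms by the action) and then ``assemble these slicewise transformations analytically in $\lambda$, including through the singular cusp value $\lambda = 0$.'' You yourself flag this assembly as the principal difficulty and then defer it to ``Abel inversion above with a careful analysis of the vanishing cycle geometry near the cusp.'' This is not a proof. The difficulty is real: for $\lambda>0$ the reduced Hamiltonian has a nondegenerate minimum and a saddle, for $\lambda<0$ it has none, and at $\lambda=0$ it is the cusp; the slicewise normalizing diffeomorphisms are constructed by qualitatively different mechanisms in these regimes, and there is no evident reason why they should glue to a single real-analytic family across $\lambda=0$. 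An Abel inversion determines the \emph{target} density $c$, but it does not furnish the \emph{transformation} realizing it analytically in $\lambda$.

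The paper avoids the assembly problem altogether by a direct power-series construction. Starting from $\omega$ in the form \eqref{eq/symplectic_structure}, one first kills the odd-in-$x$ part of the density (your symmetry step), obtaining $g(x^2,y,\lambda)$. Expanding $g = \sum g_k(y,\lambda)x^{2k}$, one solves the recursive system
\[
g_0 = (3y^2-\lambda)b_0 + c_0, \qquad g_k - 2\partial_y b_{k-1} = (2k+3)(3y^2-\lambda)b_k + c_k,
\]
for analytic $b_k(y,\lambda)$ and $c_k(y)$ (Hadamard's lemma), and the key technical lemma (Lemma~\ref{lemma_convergence}) proves by explicit majorant bounds that $b=\sum b_k x^{2k}$ and $c=\sum c_k x^{2k}$ converge. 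Setting $u=xb$, one then has $g\,dx\wedge dy = c(x^2,y)\,dx\wedge dy + du\wedge dH$, and a single application of the symplectic $u$-move (Lemma~\ref{lemma/move}) plus an adjustment of the angle coordinate yields the normal form. Uniqueness of $c$ is read off from the same recursion together with Theorem~\ref{theorem_1}, not from an Abel inversion.

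In short: your Stages~1 and~3 are plausible (though Stage~1's ``equivariant Moser/Darboux'' needs care to preserve the exact cubic form of $H$), but Stage~2 as written is a restatement of the problem rather than a solution. The paper's recursive convergence argument is the missing ingredient, and it replaces the slice-and-assemble strategy entirely.
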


We will also obtain a classification up to the right-left symplectic equivalence
in the real-analytic category using a different approach. More specifically, from 
the existence of a real-analytic symplectic normal form in a neighbourhood of a parabolic point,$^2$ we get (cf.\
Section~\ref{sec/cusp}, Theorem~\ref{theorem_analytic_right_left})
the following result.

\begin{theorem} \label{theorem_analytic_right_left_intro}
Let $\alpha$ be a parabolic orbit of a real-analytic integrable system $F = (F_1, F_2) \colon U \to \mathbb R^2$. 
Consider real-analytic coordinates $(\tilde x,\tilde y, \tilde J = \tilde \lambda, \tilde \mu)$ centered at 
some point $P \in \alpha$ such that in these coordinates,
$$\tilde H = \tilde{x}^2 - \tilde{y}^3 + \tilde{\lambda} \tilde{y} \quad \mbox{ and } \quad \tilde{J} = \tilde \lambda$$
are (uniquely defined) real-analytic functions of $(F_1,F_2)$ and the symplectic structure has the canonical form\footnote{The existence of local real-analytic coordinates bringing the fibration to such a symplectic normal form follows from \cite{VarchenkoGivental1982}, see also 
\cite{Garay2004} and Theorem~\ref{theorem_parabolic_point} below.}
$$
\textup{d} \tilde{x} \wedge \textup{d} \tilde{y} + \textup{d} \tilde{\lambda} \wedge \textup{d} \tilde \mu.
$$
Let $J = J(\tilde H,\tilde J)$ be the $2 \pi$-periodic first integral of the system in a neighbourhood of
$\alpha$ such that $J(0,0) = 0$ and $\partial_{\tilde J}J(0,0) > 0.$
Then
the germ $J = J(\tilde H, \tilde J)$ at $(0,0)$ classifies the singularity at $\alpha$ up to real-analytic symplectic equivalence.
Moreover, 
the functions $\tilde H,\tilde J$ and the symplectic structure $\omega$ have the following local normal form near $\alpha$:
$$
\tilde H = u^2 - v^3 + \tilde \lambda v, \quad \tilde J = \tilde \lambda, \qquad
\omega_{norm} = \textup{d}u \wedge \textup{d}v + \textup{d} J(\tilde H,\tilde J) \wedge \textup{d} \psi
$$
in some real-analytic coordinates $(u,v,\tilde J=\tilde\lambda, \psi) \in D^3 \times S^1$ on a neighbourhood of $\alpha$, with $\alpha=(0,0,0)\times S^1$.
\end{theorem}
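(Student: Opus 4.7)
My plan is to globalize the preliminary Varchenko--Givental normal form around the orbit $\alpha$ using the $2\pi$-periodic Hamiltonian flow of $J$, and then to read off both the normal form and the classification from this construction. First I would observe that the Hamiltonian vector field $X_J = J_{\tilde H} X_{\tilde H} + J_{\tilde J} X_{\tilde J}$ is transverse to the hypersurface $S := \{\tilde\mu = 0\}$ in a neighbourhood of $P$: in the given Darboux coordinates one has $X_{\tilde J} = \partial_{\tilde\mu}$ and $X_{\tilde H} = (3\tilde y^2 - \tilde\lambda)\partial_{\tilde x} + 2\tilde x\, \partial_{\tilde y} + \tilde y\, \partial_{\tilde\mu}$, so the $\partial_{\tilde\mu}$-component of $X_J$ equals $J_{\tilde H}\, \tilde y + J_{\tilde J}$, which is positive at the origin by the assumption $\partial_{\tilde J} J(0,0) > 0$. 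Since $X_J$ is $2\pi$-periodic and nonvanishing near $\alpha$, the flow map $\Psi(q,\psi) := \Phi^\psi_{X_J}(q)$ will be a real-analytic diffeomorphism from $S \times S^1$ onto a tubular neighbourhood $V \cong D^3 \times S^1$ of $\alpha$, provided $S$ is a sufficiently small real-analytic $3$-disk in $\{\tilde\mu = 0\}$ centred at $P$.

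I would then define coordinates $(u, v, \tilde\lambda, \psi)$ on $V$ by declaring $u, v, \tilde\lambda$ to be $X_J$-invariant with $u|_S = \tilde x$, $v|_S = \tilde y$, $\tilde\lambda|_S = \tilde\lambda$, and taking $\psi$ to be the flow parameter, so that $\partial_\psi = X_J$ on $V$. Because $\tilde H$ and $\tilde J$ Poisson-commute with $J$, they are $X_J$-invariant; since the identities $\tilde H = u^2 - v^3 + \tilde\lambda v$ and $\tilde J = \tilde\lambda$ hold on $S$ by the Varchenko--Givental hypothesis and both sides of each equation are $X_J$-invariant, they extend to all of $V$.

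The crux is to verify $\omega = \omega_{\mathrm{norm}} := du \wedge dv + dJ \wedge d\psi$ on $V$. Both forms are closed and $S^1$-invariant, and both satisfy $\iota_{\partial_\psi}\omega = -dJ = \iota_{\partial_\psi}\omega_{\mathrm{norm}}$ (the first equality because $\partial_\psi = X_J$ by construction, the second by direct calculation). On the slice $S$ the coordinate $\psi$ vanishes and $\tilde\mu \equiv 0$, so $d\psi|_S = d\tilde\mu|_S = 0$, whence
$$\omega|_S = (d\tilde x \wedge d\tilde y + d\tilde\lambda \wedge d\tilde\mu)|_S = du \wedge dv|_S = \omega_{\mathrm{norm}}|_S.$$
Then $\omega - \omega_{\mathrm{norm}}$ is $S^1$-invariant with vanishing contraction by $\partial_\psi$; in the coordinates $(u, v, \tilde\lambda, \psi)$ it therefore has the form $A\, du \wedge dv + B\, du \wedge d\tilde\lambda + C\, dv \wedge d\tilde\lambda$ with coefficients depending only on $(u, v, \tilde\lambda)$, and since these coefficients vanish on $S = \{\psi = 0\}$ they vanish identically, establishing the normal form.

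For the classification, two systems with equal germs $J_1(\tilde H, \tilde J) = J_2(\tilde H, \tilde J)$ yield identical normal forms on $D^3 \times S^1$, so that the identity in these coordinates is a real-analytic symplectomorphism preserving $\tilde H$ and $\tilde J$; since the latter are functions of $F_i$, this map preserves the Lagrangian fibration and realises the desired right-left symplectic equivalence. The converse direction, that equivalent systems produce the same germ, follows from the uniqueness of the preliminary Varchenko--Givental normal form and of the $2\pi$-periodic integral $J$, each up to finite symmetries that act trivially on the germ class. The main technical obstacle is the normal-form verification above: one must carefully use $S^1$-invariance together with the slice restriction to conclude that $\omega$ equals $\omega_{\mathrm{norm}}$ on the nose rather than merely up to a further symplectomorphism, since only such a precise identification produces a classifying invariant.
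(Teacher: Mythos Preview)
Your proof is correct and follows essentially the same construction as the paper: extend the Varchenko--Givental slice coordinates $\tilde x,\tilde y,\tilde\lambda$ along the flow of the $2\pi$-periodic integral $J$ and take the flow time as the angle $\psi$. The paper organises the argument slightly differently---it first establishes that the germ $J(\tilde H,\tilde J)$ is a complete invariant by building explicit model systems via Lazutkin's Hamiltonian gluing of a cylinder of height $T=2\pi\,\partial_{\tilde J}J$, and only then records the normal form---whereas you prove the normal form first and read off the classification from it; your explicit verification that $\omega=\omega_{\mathrm{norm}}$ via $\iota_{\partial_\psi}$ and $S^1$-invariance is more detailed than what the paper spells out, but the underlying idea is the same.
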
 

Note that Theorem~\ref{theorem_analytic_right_left_intro} 
implies the real-analytic version of
Theorem~\ref{theorem/parabolic_classification} (this was already known in \cite{Bolsinov2018},
so Theorem~\ref{theorem_analytic_right_left_intro}  can be seen as a generalisation of this result).

In particular, the simplest symplectic model for a parabolic orbit corresponds to the trivial germ $c(x^2,y)\equiv1$ in Theorem \ref {theorem_analytic_right_intro} and to the generating function germ $J(\tilde H,\tilde J)\equiv\tilde J$ in Theorem \ref {theorem_analytic_right_left_intro}.

\begin{remark}
We do not know if analogues of Theorems~\ref{theorem_analytic_right_intro} and \ref{theorem_analytic_right_left_intro}
hold also in the smooth setting. 

\begin{itemize} 
\item{In the $C^\infty$ right symplectic equivalence case, the construction shows that we can similarly obtain a smooth function $c(x^2,y)$ such that its Taylor coefficients 
with respect to the variable $x^2$ are symplectic invariants $c_k = c_k(y)$. (Note that the invariants $c_k$ are
function germs, rather than Taylor series.) However, we do not know
if there are more invariants of the right symplectic equivalence in this case (apart from $J = J(F_1,F_2)$ and
the bifurcation diagram).}

\item{For the $C^\infty$ right-left symplectic equivalence, we do not know if a neighbourhood of a parabolic point admits
`Eliasson-type' canonical coordinates $(\tilde x,\tilde y, \tilde J = \tilde \lambda, \tilde \mu)$ as above and whether 
the Taylor series of the function $J = J(\tilde H, \tilde J)$ would then classify the singularity up to
the $C^\infty$ right-left symplectic equivalence. Note that the existence of `Eliasson-type' canonical coordinates $(\tilde x,\tilde y, \tilde J = \tilde \lambda, \tilde \mu)$ is not needed for the proof of Theorem~\ref{theorem/parabolic_classification}.}
\end{itemize}
\end{remark}

 Theorem~\ref{theorem/parabolic_classification} also allows us to obtain a symplectic classification result in terms of
 the integer affine structure in the semi-local setting (that is, in
a neighbourhood of a cuspidal torus) and in a more global situation of an integrable subcritical Hamiltonian Hopf bifurcation (more specifically, neighbourhoods of so-called flaps \cite{Efstathiou2012}), where parabolic orbits appear naturally.  As a specific example, consider the bifurcation diagram of the quadratic spherical pendulum 
\begin{align} \label{eq/energy_momentum_map_qsp}
H &= \frac{1}{2}(p_x^2+p_y^2+p_z^2) + V(z), \\
J &= xp_y - y p_x,
\end{align}
where $V$ is a quadratic potential given by $V = z - z^2$; see Figure~\ref{figure/qsp}.

\begin{figure}[htbp]
\begin{center}
\includegraphics[width=0.95\linewidth]{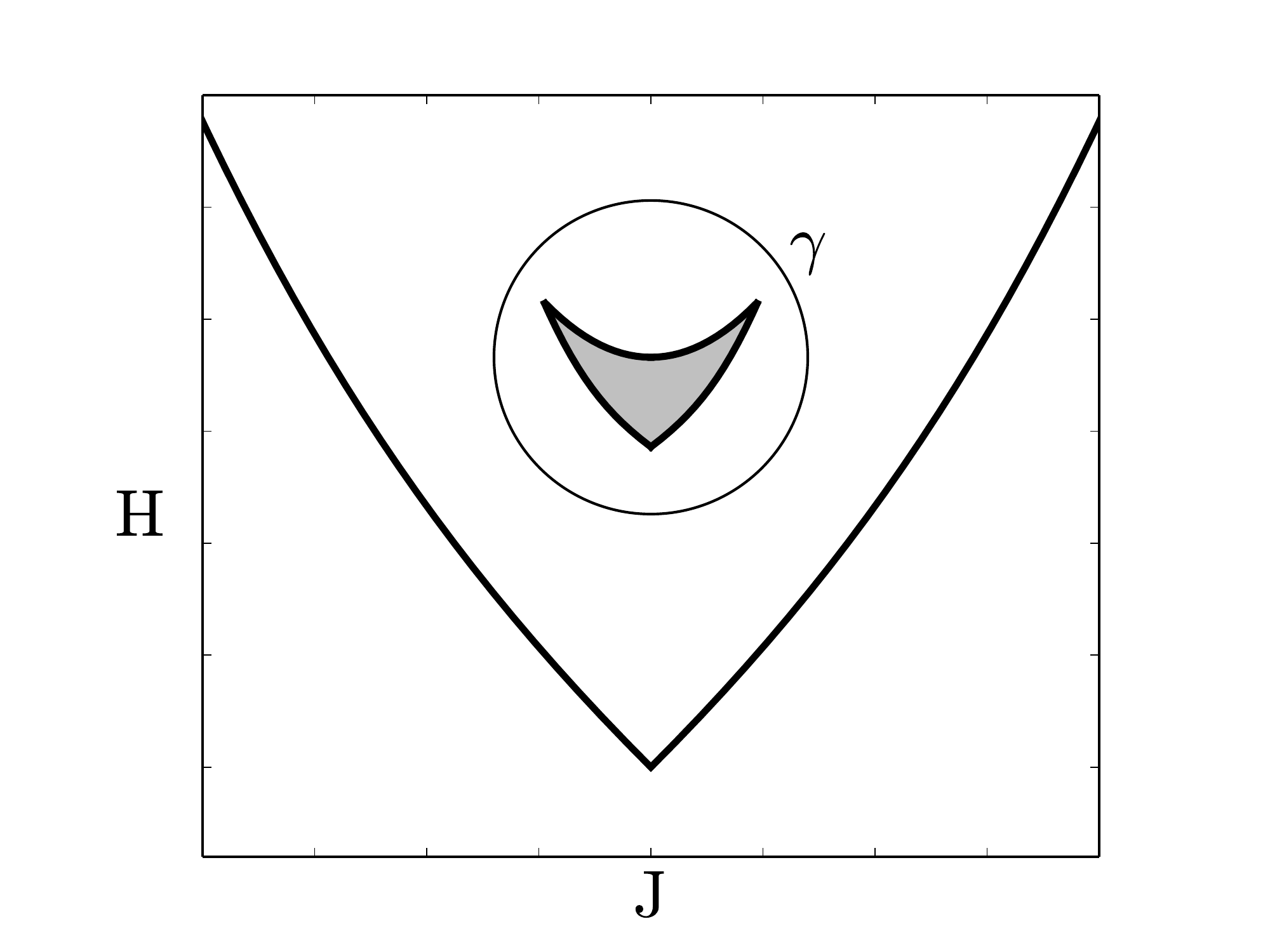}
\caption{The bifurcation diagram of a quadratic spherical pendulum and the flap (shown in gray). The integer affine structure in the interior of the curve $\gamma$ completely classifies 
the system in a neighbourhood of the flap up to symplectic equivalence.}
\label{figure/qsp}
\end{center}
\end{figure}

For a quadratic spherical pendulum, the phase space is the cotangent bundle 
$T^{*}S^2$ and the symplectic structure is canonical, but we may also
consider other symplectic structures on $T^{*}S^2$ such that $H$ and $J$ still Poisson commute. Our results then imply
the following statement. 

\begin{theorem} \label{Main_theorem}
For any two symplectic structures $\omega_1$ and $\omega_2$ such that $H$ and $J$ Poisson commute, the corresponding fibrations
are (smoothly, right-left) symplectically equivalent in a neighborhood of the flap if and only if the corresponding integer affine structures are equivalent, in the sense that there exists a diffeomorphism\footnote{In this paper, by a diffeomorphism between bifurcation complexes $B$ and $\tilde B$ of singular Lagrangian fibrations $\mathcal F \colon M \to B$ and 
$\tilde{\mathcal F} \colon \tilde M \to \tilde B$ we mean a map $g \colon B \to \tilde B$ such that  
$  g \circ \mathcal F = \tilde{\mathcal F} \circ \Phi
$
for some (not necessarily canonical) diffeomorphism $\Phi \colon M \to \tilde M$. } between the corresponding
bifurcation complexes sending one integer affine structure onto the other. 
\end{theorem}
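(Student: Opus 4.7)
\medskip

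\noindent\textit{Proof proposal.} The necessity direction is immediate: any fibre-preserving symplectomorphism sends the actions of $\omega_1$ to those of $\omega_2$ up to an $\mathbb R^2\rtimes \mathrm{SL}(2,\mathbb Z)$ transformation, so the integer affine structures must be equivalent. For sufficiency, the plan is to decompose a neighbourhood of the flap into standard strata, classify the symplectic structure on each stratum by the action variables using the local and semi-local results already proved, and glue the resulting local symplectomorphisms via Moser's path method.

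A flap is bordered by a closed curve $\gamma$ that has exactly two cusps, and at these cusps sit the parabolic orbits; the smooth part of $\gamma$ corresponds to elliptic-regular singularities, while the interior of the flap consists of regular values. Accordingly, a neighbourhood $\mathcal F^{-1}(N)$ of the flap decomposes as the union of: (a) tubular neighbourhoods $V_1,V_2$ of the two parabolic orbits, handled by Theorem~\ref{theorem/parabolic_classification}; (b) a neighbourhood $W$ of the elliptic-regular part of $\gamma$, handled by the Eliasson normal form, which classifies the symplectic structure near an elliptic-regular orbit up to fibred symplectomorphism by the elliptic action; (c) the regular part $U$, handled by Arnold--Liouville together with Duistermaat's analysis, which says that the symplectic structure on a Lagrangian torus bundle is determined up to fibred symplectomorphism by the integer affine structure, the monodromy and the Chern class. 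For the flap the torus bundle is topologically trivial, so only the integer affine structure matters. Given $g:B\to B$ matching the integer affine structures of $\omega_1$ and $\omega_2$, each of the pieces (a)--(c) then supplies a local symplectomorphism intertwining the two Lagrangian fibrations through $g$.

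To assemble these into a global symplectomorphism I would first splice the local maps with a fibred partition of unity into a single reference fibration-preserving diffeomorphism $\Phi:M\to M$ (not yet symplectic), and then run Moser's path method on the family $\omega_t=(1-t)\omega_1+t\,\Phi^{*}\omega_2$. Because both forms induce the same actions on the regular part of the flap (by construction of $\Phi$ and the matching of affine structures), $\omega_2-\omega_1$ has vanishing periods there and hence admits a fibrewise smooth primitive $\sigma$ on the regular stratum; solving $\iota_{X_t}\omega_t=-\sigma$ and flowing yields the required correction. The main obstacle, and the reason the proof rests on the earlier theorems, is the extension of $\sigma$ across the singular fibres inside the flap: in a neighbourhood of each parabolic orbit, Theorem~\ref{theorem/parabolic_classification} supplies an actual symplectomorphism intertwining the two fibrations, which in turn gives a primitive of $\omega_2-\omega_1$ that is smooth up to and across the parabolic orbit; an analogous argument near the elliptic-regular boundary uses the Eliasson coordinates. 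Splicing these local primitives with the action-angle primitive on $U$ via a fibred partition of unity produces a globally smooth $\sigma$, and Moser's argument then completes the construction.
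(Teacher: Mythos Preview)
Your decomposition of the flap is incorrect, and this causes the argument to miss precisely the pieces that require the most work. The boundary curve $\gamma$ does not consist only of two parabolic cusps joined by elliptic-regular arcs: it is a curvilinear triangle with \emph{three} vertices --- two parabolic orbits and one elliptic-elliptic (rank~$0$) point --- and one of the three sides is a family of \emph{hyperbolic} orbits (with an oriented separatrix diagram), not elliptic ones. Moreover, the interior of the flap is two-sheeted in the bifurcation complex (each value lifts to two tori), so the Duistermaat picture you invoke for the regular part must be applied on the bifurcation complex, not on the flap region in $\mathbb R^2$. As written, your stratification has no step that handles the elliptic-elliptic point (which requires Eliasson's theorem for a rank~$0$ singularity) or the hyperbolic side (whose singular fibres are not tori and for which the action-angle / Chern-class argument does not apply).

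The global Moser step is also not justified. Splicing local primitives $\sigma_i$ with a fibred partition of unity $\chi_i$ gives $\textup{d}(\sum\chi_i\sigma_i)=\omega_2-\omega_1+\sum \textup{d}\chi_i\wedge\sigma_i$, and the extra term does not vanish unless the $\sigma_i$ agree on overlaps; since the overlaps contain tori, the differences $\sigma_i-\sigma_j$ need not be exact. Even with a genuine global primitive $\sigma$, the Moser flow of $\iota_{X_t}\omega_t=-\sigma$ preserves the fibration only if $\sigma$ lies in the span of $\textup{d}H,\textup{d}J$, which you have not arranged. The paper avoids these issues entirely by exploiting the global Hamiltonian $S^1$-action generated by $J$: one first obtains a symplectomorphism near the cuspidal tori from Theorem~\ref{theorem/parabolic_classification_c}, then extends it over the complement of a small ball around the elliptic-elliptic point by the \emph{parametric} one-degree-of-freedom result (Theorem~\ref{Theorem/symplectic_1dof}) applied on the reduced spaces, and finally patches across the elliptic-elliptic point using Eliasson's normal form and a generating-function interpolation (Lemma~\ref{lemma1}). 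The circle-action reduction is the organising idea you are missing.
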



\begin{remark}
The fact that the functions $H$ and $J$ are as in Eq.~\eqref{eq/energy_momentum_map_qsp} is of course not essential,
but at the same time it does not restrict the generality since there is only `one flap' arising from an integrable subcritical Hamiltonian Hopf bifurcation up to a smooth fiberwise
diffeomorphism; see \cite{vanderMeer1985}. A coordinate free version of
 Theorem~\ref{Main_theorem} will appear
later in this paper; see Theorem~\ref{theorem/main2}.
\end{remark}

Our method of proof leading to the above results is heavily based on Moser's path method, which we shall view as a linear PDE in suitable local coordinates bringing the singularity to a (non-canonical) normal form. 
We note that the idea of using Moser's path method in the context of symplectic classification of integrable systems and volume-preserving normal forms is well known and goes back at least to the work \cite{DeVerdiere1979} on a smooth isochore Morse lemma. In particular, this method is used in the work \cite{Bolsinov2018} to give a symplectic normal form  of a cusp singularity in the analytic case of one degree of freedom systems. The perspective of the present work, however, is somewhat different. Considering Moser's path method as
a linear PDE in local coordinates will essentially allow us to reduce the question about `sufficiency' of the action variables to a more algebraic problem of verifying that the linear PDE admits a well-defined and smooth (respectively, analytic) solution. 
As we shall see later in this paper, this simple reformulation of the problem turns out to be quite useful and applicable
also to other (possibly degenerate) singularities of integrable systems.

 We note that the methods used in this paper can be adapted to the analytic and partially also to the finite-differentiable settings, but we shall mainly  be interested in the  smooth $C^\infty$ case.

\section{Preliminaries on the one degree of freedom case} \label{sec/1dof_prelim}

Consider a smooth function $H$ on $\mathbb R^2$. Let $\omega_0$ and $\omega_1, \ \omega_1/\omega_0 > 0,$ be two symplectic forms defined in a neighborhood of the origin $O$ in $\mathbb R^2$.  Assume that $H$ has an isolated singularity at $O$. For the moment, we shall also assume  that locally each level set of $H$ is  the graph of a function $\beta$ of a fixed variable $x$, that is, we assume that locally
\begin{equation} \label{eq/mon}
H^{-1}(h)  = \{(x,y) \colon y = \beta(x,h)\},
\end{equation}
where $(x,y) \in \mathbb R^2$ are some local coordinates around $O = \{x = y = 0\}$ and $\beta$ is smooth outside the point $x = 0, h = H(0,0)$.

Consider the compact region $R(h)$ in $\mathbb R^2$ that is enclosed by the $H = 0$ and $H = h$ level sets and
 two Lagrangian sections $x = \pm \varepsilon;$ see Fig.~\ref{fig/fibration_lambda0}.
Define the area functions (local action variables corresponding to the two symplectic forms) by
$$
I_i(h) = \dfrac{1}{2\pi} \int\limits_{(x,y) \in R(h)} \omega_i.
$$
In this case, the question posed in \cite{Bolsinov2018_2} can be reformulated as follows. Assume that $I_1(h) - I_0(h)$
is a smooth function of $h$. Is it true that there exists a local diffeomorphism $\Phi$ such that
$H \circ \Phi = H$ and $\Phi^{*}(\omega_1) = \omega_0$?

\begin{figure}[htbp]
\begin{center}
\includegraphics[width=\linewidth]{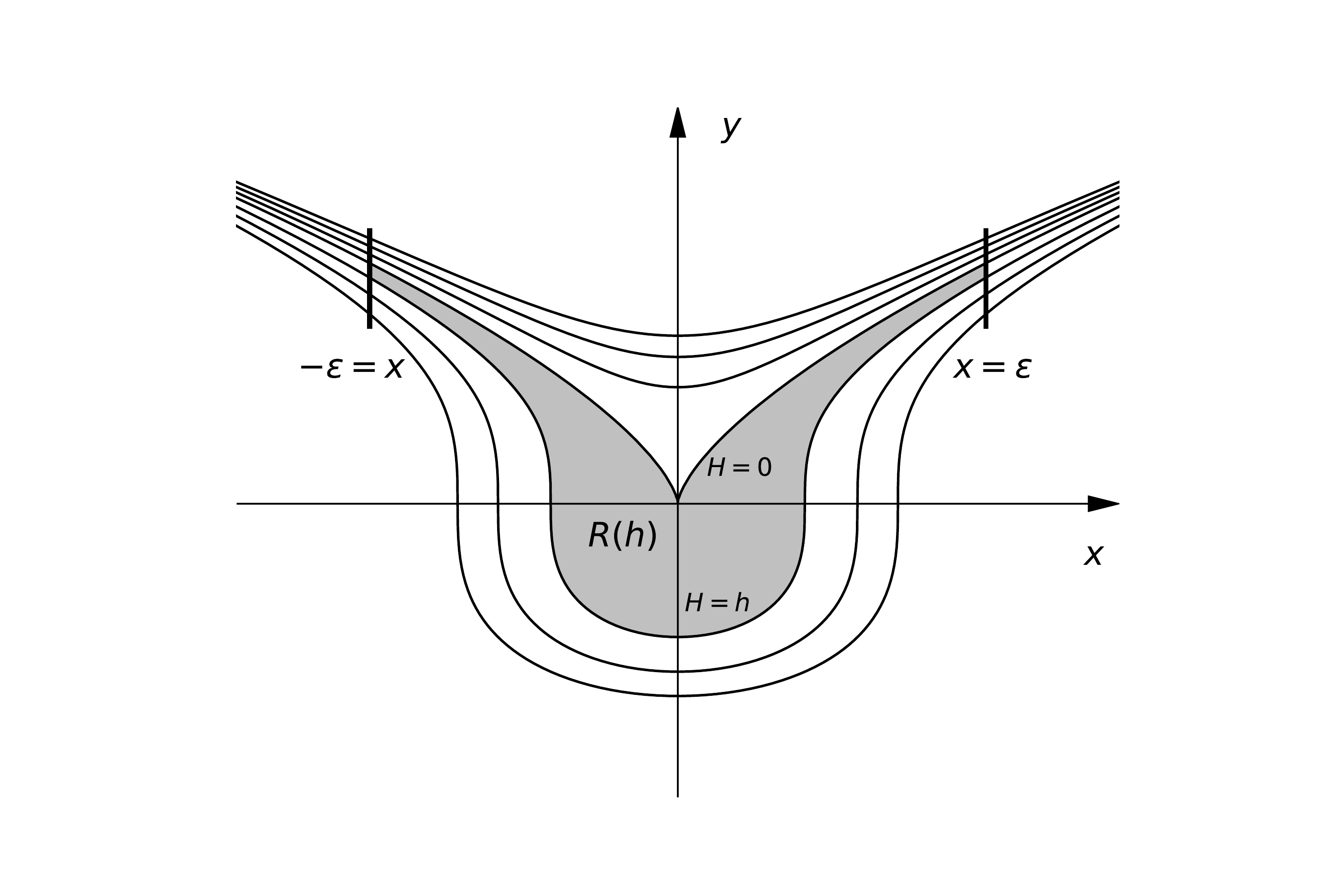}
\caption{The singular Lagrangian
fibration of $H$, sections $x = \pm \varepsilon$ and the region $R(h)$.}
\label{fig/fibration_lambda0}
\end{center}
\end{figure}

A sufficient  condition for the existence of such $\Phi$ is given by an adaptation of Moser's trick, as
described in the following lemma (see also \cite[Lemme principal]{DeVerdiere1979} and \cite[Proposition 2.1 and Theorem 2.1]{Francoise1988}).

\begin{lemma} \label{lemma/move}
Suppose that there exists a smooth function $u$ such that 
\begin{equation} \label{eq/Moser}
\textup{d}u \wedge \textup{d}H = \omega_1 - \omega_0.
\end{equation}
Then there exists a local diffeomorphism $\Phi$ such that 
$H \circ \Phi = H$ and $\Phi^{*}\omega_1 = \omega_0$. Moreover, we can choose $\Phi$ to be
smoothly isotopic to the identity in the class of $H$-preserving diffeomorphisms.
\end{lemma}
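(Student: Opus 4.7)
The plan is to carry out the classical Moser path method, with the hypothesised function $u$ providing exactly the primitive required. First, I would linearly interpolate the two forms, setting $\omega_t := (1-t)\omega_0 + t\omega_1$ for $t \in [0,1]$. The standing assumption $\omega_1/\omega_0 > 0$ means $\omega_0$ and $\omega_1$ are area forms of the same sign near $O$, so every $\omega_t$ is a nonvanishing $2$-form, hence symplectic, on some common small neighborhood of $O$ uniformly in $t$. I would then look for a smooth isotopy $\Phi_t$ with $\Phi_0 = \mathrm{id}$ satisfying $\Phi_t^{*}\omega_t = \omega_0$ and $H \circ \Phi_t = H$, realised as the flow of a time-dependent vector field $X_t$.

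Differentiating the desired identity $\Phi_t^{*}\omega_t = \omega_0$ in $t$ and using Cartan's magic formula together with the automatic closedness of $\omega_t$ reduces the problem to solving the cohomological equation $\textup{d}(i_{X_t}\omega_t) = -(\omega_1 - \omega_0)$. The hypothesis $\textup{d}u \wedge \textup{d}H = \omega_1 - \omega_0$ suggests the pointwise ansatz $i_{X_t}\omega_t = -u\,\textup{d}H$, which is uniquely solvable for $X_t$ by non-degeneracy of $\omega_t$ and yields $X_t = u \cdot X_H^{\omega_t}$, where $X_H^{\omega_t}$ denotes the Hamiltonian vector field of $H$ with respect to $\omega_t$. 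Antisymmetry of $\omega_t$ gives $\textup{d}H(X_H^{\omega_t}) = 0$, whence $\textup{d}H(X_t) = u \cdot \textup{d}H(X_H^{\omega_t}) = 0$, so the flow of $X_t$ automatically preserves $H$.

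Next, I would verify that the flow $\Phi_t$ is defined on a common neighborhood of $O$ for all $t \in [0,1]$. Because $O$ is an isolated singularity of $H$, we have $\textup{d}H(O) = 0$, so $X_t(O) = 0$ for every $t$; the origin is a fixed point of the flow, and standard ODE theory (continuous dependence on parameters) delivers a local smooth flow $\Phi_t$ on such a neighborhood for $t \in [0,1]$. A direct computation then yields
$$\tfrac{\textup{d}}{\textup{d}t}\bigl(\Phi_t^{*}\omega_t\bigr) = \Phi_t^{*}\bigl(L_{X_t}\omega_t + \dot\omega_t\bigr) = \Phi_t^{*}\bigl(\textup{d}(i_{X_t}\omega_t) + \omega_1 - \omega_0\bigr) = 0,$$
so $\Phi_1^{*}\omega_1 = \omega_0$; combined with $H \circ \Phi_t = H$ and $\Phi_0 = \mathrm{id}$, the diffeomorphism $\Phi := \Phi_1$ together with the isotopy $\{\Phi_t\}_{t \in [0,1]}$ in the class of $H$-preserving maps fulfils all the required conclusions.

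The only potential obstacle is the uniform existence of the flow on $t \in [0,1]$, but here it is immediate from $X_t(O) = 0$; apart from that, the argument is a by-the-book application of Moser's trick, and the real content of the lemma is that the hypothesis supplies precisely the primitive needed to solve the cohomological equation.
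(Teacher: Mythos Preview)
Your argument is correct and is essentially the paper's own proof: the same linear interpolation $\omega_t$, the same defining equation $i_{X_t}\omega_t = -u\,\textup{d}H$, and the same Cartan--Moser computation, with your write-up supplying slightly more detail on nondegeneracy of $\omega_t$ and local existence of the flow. The paper additionally records an explicit closed-form alternative, $\Phi(Q)=\sigma_1^{u(Q)}(Q)$ where $\sigma_1^t$ is the Hamiltonian flow of $H$ with respect to $\omega_1$, which you may find worth noting.
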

\begin{proof}
Let the vector field $X_t$ be defined by the formula
$$i_{X_t} \omega_t = - u\, \textup{d}H,$$ where $\omega_t = \omega_0 + t(\omega_1 - \omega_0), \ t \in [0,1]$.
The required diffeomorphism can then be defined by integrating this vector field $X_t$ from $0$ to $1$, that is, as the solution at $t = 1$ of the equation 
\begin{align*}
 \frac{\textup{d}}{\textup{d}t}\Phi_t = X_t \circ \Phi_t, \quad \Phi_0 = \textup{id}.
\end{align*}
Indeed, under the map $\Phi_t$, we have $\frac{\textup{d}}{\textup{d}t}(\Phi_t^{*}\omega_t) = \Phi_t^{*}(\textup{d} i_{X_t} \omega_t + \frac{\textup{d}}{\textup{d}t} \omega_t)
= 0$ and hence $\Phi_1^*\omega_1= 
\omega_0.$ It is not difficult to check that $\Phi_t$
leaves $H$ invariant. Therefore, $\Phi_1$ is as required.

Let us also give another proof of this fact, with a simple `explicit' formula for an $H$-preserving diffeomorphism $\Phi$.
More specifically, let $\sigma^t_0$ and $\sigma_1^t$ denote the Hamiltonian flows of $H$ w.r.t.\ $\omega_0$ and $\omega_1,$
respectively.
Following \cite[Eq.~(5.1) and (5.2)]{Bolsinov2018}, we can define a required $\Phi$ simply by 
$$\Phi(Q)=\sigma_1^{u(Q)}(Q).$$
Clearly, $\Phi$ preserves $H$, thus it remains to check that $\Phi^*\omega_1=\omega_0$.
Take any regular $\sigma_1^t$-orbit and choose a smooth function $u_1$ on a neighbourhood $V$ of this orbit such that 
$\omega_1|_V=\textup{d}H\wedge\textup{d}u_1$ (i.e.\ $H,u_1$ are Darboux coordinates for $\omega_1|_V$).
In these coordinates, the Hamiltonian flow $\sigma_1^{t}$ has the form
$\sigma_1^{t}(H,u_1) = (H,u_1 + t)$. Hence 
$$\Phi(H,u_1) = \sigma_1^{u(H,u_1)}(H,u_1) = (H,u_1 + u(H,u_1)).$$
It follows that $\Phi^*\omega_1 = \textup{d}H \wedge \textup{d} (u_1 + u) = \omega_1 + \omega_0 - \omega_1 = \omega_0$
on $V$, and hence everywhere. Note that one can alternatively define a
required diffeomorphism $\tilde \Phi$ 
by 
$\tilde \Phi^{-1}(Q)=\sigma_0^{-u(Q)}(Q).$

We also note that a desired diffeomorphism is not unique, and $\Phi_1, \Phi, \tilde\Phi$ constructed above 
may be different from each other.
\end{proof}

It is thus sufficient to find a smooth function $u$ solving Eq.~\eqref{eq/Moser}.
We make the simple observation that Eq.~\eqref{eq/Moser} is simply a linear PDE on the unknown function $u$, for it can be rewritten as
\begin{equation} \label{eq/PDE}
\partial_x u \cdot \partial_y H - \partial_y u \cdot \partial_x H=  g(x,y), 
\end{equation}
where 
$$
\omega_1 - \omega_0 = g(x,y) \textup{d}x \wedge \textup{d}y.
$$
We note that Eq.~\eqref{eq/PDE} can always be solved, but not always can one find a smooth (or even an everywhere well-defined) solution. Indeed, the following formula, coming from the initial condition $\tilde u |_{x = -\varepsilon}=0$, 
\begin{equation} \label{eq/solution}
\tilde{u}(x,h) = \int_{-\varepsilon}^x \dfrac{g(t,\beta(t,h))}{H_y(t,\beta(t,h))} \textup{d}t
\end{equation}
gives rise to a solution $u = \tilde{u}(x,H(x,y))$ of Eq.~\eqref{eq/PDE}; the general solution is then $\tilde{u}(x,H(x,y)) + \alpha(H(x,y))$, where $\alpha$ is a function of $H$.

  Therefore, in this situation, the above question about the action variables amounts to checking whether the smoothness of $$2\pi(I_1'(h) - I_0'(h))
= \frac{\textup{d}}{\textup{d}h} \int_{R(h)} \omega_1-\omega_0 
= \frac{\textup{d}}{\textup{d}h} \int_{R(h)} \textup{d}u \wedge \textup{d}H
= \tilde{u}(\varepsilon,h)
$$
with respect to $h$ is sufficient for $u = u(x,y)$ to be smooth with respect to $x$ and $y$.  In Section~\ref{sec/1dof}, we will show how this can be proven for one degree of freedom elliptic and hyperbolic singularities and also for a cusp
singularity. The two-degree of freedom parabolic case will be discussed later in this paper; see Section~\ref{sec/cusp}.

  We note that the symplectic classification of (one degree of freedom) elliptic and hyperbolic cases is well known in all categories (see \cite{DeVerdiere1979, Eliasson1984, Eliasson1990} for the elliptic  and \cite{Dufour1994} for the hyperbolic case). In the analytic situation, a  parabolic singularity is studied in a neighbourhood of a parabolic point in 
  \cite{VarchenkoGivental1982, Garay2004} and in a neighborhood of a parabolic orbit in \cite{Guglielmi2018, Bolsinov2018}. The symplectic classification of a parabolic singularity in the smooth category that we obtain in this work  seems to be new (already in the one degree of freedom case).

  To close this section, let us discuss some terminology and a simple general procedure that will be useful later on.
  Suppose that we have decomposed the solution $u$ as a sum $u = u_1 + u_2$, where $u_1$ is known to be smooth. Then we can apply Moser's trick using $u_1$, which will have an effect both on Eq.~\eqref{eq/PDE} and \eqref{eq/solution}. More specifically, this Moser's trick, does not change $H$ and transforms the symplectic form $\omega_1$ to another symplectic form $\tilde\omega_1$ 
  such that 
  $$
  \omega_1 - \tilde\omega_1 = \textup{d}u_1 \wedge \textup{d}H.
  $$
  Hence, $\tilde\omega_1 - \omega_0 = \omega_1 - \omega_0 - \textup{d}u_1 \wedge \textup{d}H = \textup{d}u_2 \wedge \textup{d}H.$ Denoting $\tilde\omega_1 - \omega_0$ by $\tilde{g}(x,y) \textup{d}x \wedge \textup{d}y$, we are left with the equation
  \begin{equation*}
\partial_x u_2 \cdot \partial_y H - \partial_y u_2 \cdot \partial_x H=  \tilde{g}(x,y).
\end{equation*}
Moreover, the function $u_2$ can be written as 
\begin{equation*}
u_2 = \tilde u_2(x,H(x,y)), \quad \tilde u_2(x,h) = \int_{-\varepsilon}^x \dfrac{\tilde{g}(t,\beta(t,h))}{H_y(t,\beta(x,h))} \textup{d}t.
\end{equation*}
 It is then left to consider $u_2$ instead of $u$. Note that here linearity of Eq.~\eqref{eq/Moser} (equivalently, Eq.~\eqref{eq/PDE}) is used. 
 
 Essentially, this procedure amounts to changing the symplectic form $\omega_1$ by 
$
  \omega_1 - \textup{d}u_1 \wedge \textup{d}H.
  $  We shall refer to this operation as a \textit{symplectic $u_1$-move}. 
 
 \begin{definition}
 Consider a one degree of freedom Hamiltonian system $(M^2, \omega, H).$ For a smooth function $u$ on
 $M^2$, the transformation
 $$
 \omega \mapsto \omega -  \textup{d}u \wedge \textup{d}H
 $$
 will be called a \textit{symplectic $u$-move} or simply a \textit{symplectic move}. 
 \end{definition}
 
 \begin{remark}
 By Lemma~\ref{lemma/move},  a symplectic $u$-move is always induced by an $H$-preserving diffeomorphism. 
Such a diffeomorphism $\Phi$ can, for instance, be given as the time-1 map of 
 $X_t = \omega_t^{-1}(-u\, \textup{d}H),$ where $\omega_t$ is given by $\omega_t = \omega + (t-1)\textup{d}u \wedge \textup{d}H, \ t \in [0,1].$ Note that $\Phi$ depends on the Hamiltonian $H$ and the symplectic form $\omega$, whereas a symplectic $u$-move depends only on $H$ (for a given $u$). 
 
 In what follows we will mainly be interested in symplectic moves and will not explicitly mention their dependence
on $H$ since it will always be clear from the context what the Hamiltonian function is.

 \end{remark}
 
 The following two simple examples will be used later on.
 
 \begin{example} \label{example/moves}
\mbox{ }

1. Consider the case when $u_1 = a(x)b(y)$, where $a$ and $b$ are smooth. Then the symplectic $u_1$-move transforms $\omega_1$ to the form $\tilde\omega_1$ with
\begin{equation*}
\omega_1 - \tilde\omega_1 = (a'(x)b(y)H_y - a(x)b'(y)H_x) \textup{d}x \wedge \textup{d}y.
\end{equation*} 

2. Let $H = x^2 + f(y)$. Write the symplectic form as 
$$ \omega_1= (xg_o(x^2, y) + g_e(x^2,y))\textup{d}x\wedge \textup{d}y$$ and consider the function
\begin{equation*}
\tilde u_1(y,h) =  -  \int_{0}^y \frac{\sqrt{h-f(t)}}{H_x}g_o(h-f(t),t) \textup{d}t = - \frac12 \int_{0}^y g_o(h-f(t),t) \textup{d}t.
\end{equation*} 
Note that this is essentially the same formula as Eq.~\eqref{eq/solution} with the roles of $x$ and $y$ interchanged, coming from the initial condition $\tilde u_1 |_{y = 0}=0$.
Let $u_1 = \tilde u_1(y,H(x,y))$. Then the  symplectic $u_1$-move transforms $\omega_1$ to its even part $\tilde\omega_1 = g_e(x^2,y)\textup{d}x\wedge \textup{d}y$.

\end{example}

  We remark that  up to this point, the above discussion  in the analytic setting is completely parallel to the $C^\infty$ case. 
  
  \section{Non-degenerate and cusp
singularities in one degree of freedom} \label{sec/1dof}
  
  In this section, we will illustrate the approach outlined in Section~\ref{sec/1dof_prelim} on a few one-degree of freedom systems. We will first review the well-known cases of 1 d.o.f.\  elliptic and hyperbolic singularities and then give a symplectic classification of a 
cusp (= 1 d.o.f.\ parabolic) singularity in the smooth category. The parabolic
singularity in the case of two degrees of freedom will be addressed later in Section~\ref{sec/cusp}.

  \subsection{Elliptic and hyperbolic singularities} 
  
  Let $H$ be a germ of a smooth function on $\mathbb R^2$ with an isolated non-degenerate  critical point at the origin $O$. Here non-degeneracy simply means that the Hessian 
  $$\dfrac{\partial^2 H}{\partial x_i \partial x_j}$$ 
  is non-degenerate at $O$. 
  Consider the local fibration  induced by the function $H$, and let $\omega_1$ and $\omega_0$ be germs of symplectic forms.  We would now like to understand under which conditions there exists a germ of an $H$-preserving symplectomorphism $\Phi$ of  $\mathbb R^2$ sending $\omega_1$ to $\omega_0$.
  
  By the standard Morse lemma, we can assume that up to an additive constant, $H = x^2 \pm y^2$ in some local coordinates near $O$. First, let us assume that we are in the elliptic case, that is, $H = x^2 + y^2$. Then the necessary condition for the existence of 
  an $H$-preserving symplectomorphism $\Phi$ is that the areas (action variables) 
  $$
I_i(h) = \dfrac{1}{2\pi}
\int\limits_{H(x,y) \le h} \omega_i,
$$
where $h > 0$,
  corresponding to the two symplectic forms coincide. This necessary condition is well known to be sufficient in this case.
  
  \begin{proposition}[{\cite{DeVerdiere1979}}] \label{prop/elliptic} 
  Let $H = x^2+y^2$ and  $\omega_1, \omega_0$ be germs of symplectic forms at $O$. Assume that  the action variables corresponding to these forms coincide: $I_1(h) = I_0(h)$. Then there exists a local diffeomorphism $\Phi$ such that
$H \circ \Phi = H$ and $\Phi^{*}(\omega_0) = \omega_1$.
  \end{proposition}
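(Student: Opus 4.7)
The plan is to apply Moser's path method (Lemma~\ref{lemma/move}): writing $\omega_1-\omega_0=g(x,y)\,\textup{d}x\wedge\textup{d}y$, it suffices to produce a smooth $u$ near $O$ with $\textup{d}u\wedge\textup{d}H=g\,\textup{d}x\wedge\textup{d}y$. Since the Hamiltonian vector field of $H=x^2+y^2$ generates a free $S^1$-action on the punctured disk, passing to polar action-angle coordinates $(H,\varphi)$ reduces the PDE to the cohomological equation $\partial_\varphi u=-\tfrac12\,g$, whose only obstruction is the vanishing of the circle-average of $g$ on every level $\{H=h\}$.

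This obstruction is supplied exactly by the hypothesis $I_1=I_0$: differentiating it in $h$ gives
\[
0\;=\;\frac{d}{dh}\!\int_{H\le h}(\omega_1-\omega_0)\;=\;\tfrac12\!\int_0^{2\pi}\! g\bigl(\sqrt h\cos\varphi,\sqrt h\sin\varphi\bigr)\,\textup{d}\varphi
\]
for all $h\ge 0$. The explicit formula
\[
u(x,y)\;=\;-\tfrac12\!\int_0^{\varphi(x,y)}\! g\bigl(\sqrt H\cos\varphi',\sqrt H\sin\varphi'\bigr)\,\textup{d}\varphi'
\]
then defines a smooth $2\pi$-periodic solution on the punctured disk (unique up to an additive function of $H$).

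The only remaining — and, I expect, main — task is to verify that $u$ extends smoothly across the critical point $O$. I would handle this via $S^1$-isotypic decomposition: expanding $g=\sum_{k\in\mathbb Z}g_k(H)\,e^{ik\varphi}$, the circle-average condition gives $g_0\equiv 0$, and smoothness of $g$ in Cartesian coordinates is equivalent to each component having the form $g_k(H)\,e^{ik\varphi}=h_k(H)\,z^k$ for $k\ge 0$ (resp.\ $h_k(H)\,\bar z^{|k|}$ for $k<0$) with $h_k$ smooth on $\mathbb R$. The formal solution $u=\sum_{k\ne 0}\tfrac{i\,g_k(H)}{2k}\,e^{ik\varphi}$ then has components of the same form, so by Borel summation it can be realized as a smooth function $u_1$ near $O$ with the correct formal Taylor series; the difference between $u_1$ and the integral-formula solution is flat at $O$ and extends by $0$. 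Lemma~\ref{lemma/move} then delivers the required $H$-preserving symplectomorphism $\Phi$.
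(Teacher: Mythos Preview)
Your argument is essentially correct, but the last step is phrased imprecisely. From the Borel realisation $u_1$ you only get that $Lu_1-g$ is flat at $O$ (where $L=-2\partial_\varphi$), not that $u_1$ agrees with the integral-formula solution modulo flat functions; two solutions of $Lu=g$ differ by a function of $H$, not by something flat. The clean way to finish is: set $\rho:=g-Lu_1$, which is flat and still has zero circle-averages; then the integral formula applied to $\rho$ produces a flat smooth $u_2$ with $Lu_2=\rho$, and $u=u_1+u_2$ is the desired smooth solution. (In fact you can bypass Borel entirely: the averaging formula $u(p)=-\frac{1}{4\pi}\int_0^{2\pi}s\,g(R_sp)\,ds$ is manifestly smooth in $p$ and solves $\partial_\varphi u=-\tfrac12 g$ once the circle-average of $g$ vanishes.)

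Your route is genuinely different from the paper's. You exploit the $S^1$-symmetry of $H=x^2+y^2$ head-on: pass to action--angle coordinates, reduce to the cohomological equation for rotation, and read off the solution isotypic component by isotypic component. The paper instead keeps Cartesian coordinates, first reduces $g$ to an even--even function via symplectic moves (Example~\ref{example/moves}), then uses Lemma~\ref{lemma/1} to bring $g$ to the normal form $\sum_{i\le n}d_iy^{2i}+R(x^2,y^2)y^{2(n+1)}$, and finally uses an order-counting argument on the action integral to force all $d_i=0$, so that the explicit solution~\eqref{eq/solution} is $C^{n-1}$. Your approach is shorter and more conceptual for the elliptic case. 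The paper's approach, however, is deliberately built as a template: the same pattern---symplectic moves to a normal form, then order-counting against the action derivative---is exactly what is used for the cusp $H=x^2-y^3$ (Proposition~\ref{prop/cusp}) and its parametric version (Theorem~\ref{theorem_1}), where there is no $S^1$-action and your Fourier/averaging argument is unavailable.
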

  
  We shall now give a proof of this statement using the method outlined in Section~\ref{sec/1dof_prelim}. First, we show the following normal form result.
  
  \begin{lemma} \label{lemma/1}
  Let $H = x^2+y^2$ and  the symplectic form be given by $\omega = g(x^2,y^2) \textup{d}x \wedge \textup{d}y$. For every $n \in \mathbb N$, there exists a symplectic $u$-move, with 
the function $u$ satisfying $u(x,0)= 0,$ that transforms $\omega$ to the form
  $\omega_{n} = g_n(x^2,y^2) \textup{d}x \wedge \textup{d}y,$ where
  $$g_{n}(x^2,y^2) = \sum_{i=0}^{n} d_i y^{2i} + R(x^2,y^2)y^{2(n+1)}.$$
 In particular, there exists a local diffeomorphism $\Phi$ such that
$H \circ \Phi = H$ and $\Phi^{*}(\omega_n) = \omega$.
  \end{lemma}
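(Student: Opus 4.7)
The plan is to proceed by induction on $n$, reducing $\omega$ to $\omega_n$ through a sequence of $n+1$ symplectic $u_k$-moves, each of which will vanish on $\{y = 0\}$ and normalise the coefficient of $y^{2k}$ in the Taylor expansion of the symplectic density in powers of $y^2$. The total generator $u = u_0 + \cdots + u_n$ will then satisfy $u(x, 0) = 0$ and $\textup{d}u \wedge \textup{d}H = \omega - \omega_n$, so Lemma~\ref{lemma/move} will supply the required $H$-preserving local diffeomorphism $\Phi$ with $\Phi^{*}(\omega_n) = \omega$.

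At the inductive step I would assume that the density has already been brought to the form $\sum_{i=0}^{k-1} d_i y^{2i} + y^{2k} R_k(x^2, y^2)$, split $R_k(x^2, 0) = d_k + x^2 \gamma_k(x^2)$ with $d_k := R_k(0, 0)$, and take the ansatz
\[
u_k(x, y) \;:=\; c_k\, x\, y^{2k+1}\, \gamma_k(x^2).
\]
The constant $c_k$ will be fixed by demanding that $-2x\, \partial_y u_k\big|_{y = 0} = x^2 \gamma_k(x^2)$, which forces $c_k = -\tfrac{1}{2(2k+1)}$. A direct computation using $\textup{d}u_k \wedge \textup{d}H = (2y \partial_x u_k - 2x \partial_y u_k)\,\textup{d}x \wedge \textup{d}y$ then shows that $-2x \partial_y u_k$ contributes precisely the offending term $x^2 \gamma_k(x^2) y^{2k}$, while $2y \partial_x u_k$ lies in order $y^{2k+2}$; hence the $u_k$-move kills the $x^2$-dependence at order $y^{2k}$ without disturbing the already-normalised coefficients $d_0, \ldots, d_{k-1}$, and simply perturbs the remainder at order $y^{2(k+1)}$.

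Iterating $n+1$ times would yield the symplectic form $\omega_n$ of the claimed shape. Since $d_0 = g(0, 0) > 0$, the density $g_n$ is positive near $O$, so $\omega_n$ is symplectic and the ratio hypothesis $\omega_n/\omega > 0$ of Lemma~\ref{lemma/move} holds; applying that lemma with the roles of the two forms interchanged (equivalently, with $-u$ in place of $u$) then delivers the desired $\Phi$. I do not expect a genuine obstacle here: the whole argument rests on the triangular structure of the ansatz, which prevents back-reaction between successive moves, so the proof reduces to one line of algebra at each step. The only mild point to watch is preservation of $u(x, 0) = 0$, which is automatic because every $u_k$ carries an odd power of $y$.
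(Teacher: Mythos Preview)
Your argument is correct and follows essentially the same strategy as the paper: both use generators of the form $x\cdot(\text{smooth function of }x^2)\cdot y^{2k+1}$ to kill the $x$-dependence of the $y^{2k}$-coefficient, exploiting the triangular feature that the side effect lands only at order $y^{2k+2}$. Your packaging is in fact slightly tidier---the paper splits the $y^{2i}$-coefficient into a degree-$2m$ Taylor polynomial in $x^2$ plus remainder and uses separate monomial moves $u_{ij}=x^{2j-1}y^{2i+1}$ together with a remainder move $u_i=r_i(x^2)x^{2m+1}y^{2i+1}$ (requiring $m>n$), whereas you absorb the whole $x$-dependence at level $k$ into the single function $\gamma_k(x^2)$ and handle it with one move.
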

  
  \begin{proof}
  Observe that $g(x^2,y^2)$ can be decomposed as 
$$g(x^2,y^2) = \sum_{i = 0}^n g_i(x^2) y^{2i} + r(x^2,y^2)y^{2(n+1)},$$
where $g_i$ and $r$ are smooth functions of their arguments. Similarly, 
$$g_i(x^2) = \sum_{j = 0}^m c_j x^{2j} + r_i(x^2) x^{2(m+1)}.$$ Hence, we can write
$$g(x^2,y^2) = \sum_{i,j}^{n,m} c_j x^{2j} y^{2i} + \sum_{i}^n r_i(x^2)x^{2(m+1)}y^{2i} + r(x^2,y^2)y^{2(n+1)}.$$
Now consider the smooth function $u_{ij} = x^{2j-1}y^{2i+1}$. By Example~\ref{example/moves}, the corresponding symplectic move changes
$g(x^2,y^2)$ by 
$$2(2j-1)x^{2j-2}y^{2i+2} - 2(2i+1)x^{2j}y^{2i}.$$
Similarly, the symplectic $u_{i}$-move with $u_i = r_i(x^2)x^{2m+1} y^{2i+1}$ changes 
$g(x^2,y^2)$ by
$$\tilde{r}_i(x^2)x^{2m}y^{2i+2} - 2(2i+1)r_i(x^2)x^{2(m+1)}y^{2i}.$$
Summing the functions $u_{ij}$ and $u_i$ with appropriate coefficients, we can transform the function
$g(x^2,y^2)$ to the form
$$g_{norm}(x^2,y^2) = \sum_{i}^{n} d_i y^{2i} + R(x^2,y^2)y^{2(n+1)},$$
provided that $m > n.$ It is left to observe that all of the functions $u_{ij}$ and $u_j$ are divisible by $y$.
  \end{proof}
  We are now ready to prove the proposition.
   \begin{proof}
  Write $\omega_1 - \omega_0$ as $\omega_1 - \omega_0 = g(x,y) \textup{d}x \wedge \textup{d}y$ for some smooth germ $g = g(x,y)$. By assumption, 
\begin{equation} \label{eq/actions_zero}
I(h) = \dfrac{1}{2\pi} 
\int\limits_{H(x,y) \le h} g(x,y) \textup{d}x\, \textup{d}y = 0
\end{equation}
for all (small) $h >0$. We shall deduce from this that the two-form
$g(x,y) \textup{d}x \wedge \textup{d}y$ can be written as $
g(x,y) \textup{d}x \wedge \textup{d}y = \textup{d}u \wedge \textup{d}H$
for some smooth germ $u(x,y)$, that is, that $\omega_1$ and $\omega_0$ are related by a $u$-move (this is enough, due to Lemma \ref {lemma/move}). 

Following the strategy explained above, we can first of all assume (see
Example~\ref{example/moves}.2) that $g$ is even with respect to $x$ and $y$, so
that $g(x,y) = g_e(x^2,y^2)$. Next, we rewrite the equation $
g_{e}(x^2,y^2) \textup{d}x \wedge \textup{d}y = \textup{d}u \wedge \textup{d}H$ as the first order PDE
\begin{equation*}
2y u_x - 2x u_y = g_{e}(x^2,y^2).
\end{equation*}
Observe that it admits a solution of the form
\begin{equation*}
u(x,y) = \tilde{u} (x, x^2+y^2), \mbox{ where }\tilde u(x,h) = \int^x_f \dfrac{g_{e}(t^2, h-t^2)}{2\sqrt{h-t^2}} \textup{d}t.
\end{equation*}
Our goal is to show, using Eq.~\eqref{eq/actions_zero}, that for some choice of the initial condition $f$, the function $u$ is smooth.  In the rest of the proof, we shall fix the initial condition $f = -\sqrt{h}$ ($h > 0$). 
Note that Eq.~\eqref{eq/actions_zero} implies $\tilde{u}(\sqrt{h},h)-\tilde{u}(-\sqrt{h},h)=2\pi I'(h)=0$.
We will now prove that for arbitrary large $n$, the function
$\tilde u$  is differentiable at leat $n-1$ times, implying that $\tilde u = \tilde{u}(x,h)$ and hence $u = u(x,y)$ are smooth.

Applying Lemma~\ref{lemma/1}, we can assume that $g$ is of the form 
\begin{equation*} 
g(x,y) = \sum_{i=0}^{n} d_i y^{2i} + R(x^2,y^2)y^{2(n+1)},
\end{equation*}
where $n \in \mathbb N$ is arbitrary large, and write $\tilde{u}(x,h)$ as
\begin{multline} \label{eq/ff0}
2 \tilde{u}(x,h) = \int^x_f  \sum_i d_i (h-t^2)^{(2i-1)/2} \textup{d}t+ \\ \int^x_f R(t^2,h-t^2)(h-t^2)^{(2n+1)/2} \textup{d}t.
\end{multline}
Observe that 
\begin{multline} \label{eq/ff20}
\int_{-\sqrt{h}}^{\sqrt{h}} \sum_i d_{i}  (h-x^2)^{(2i-1)/2} \textup{d}x +  \\ \int_{-\sqrt{h}}^{\sqrt{h}}  R(x^2,h-x^2)(h-x^2)^{(2n+1)/2}\textup{d}x
\end{multline}
is exactly equal to the derivative $2\pi I'(h)$.

We also observe that the second integral in \eqref{eq/ff20} is of order
at least $h^{n +1}$. On the other hand, 
\begin{equation*} 
\int_{-\sqrt{h}}^{\sqrt{h}} (h-x^2)^{(2i-1)/2} \textup{d}x = h^{i} \int_{-1}^{1} (1-x^2)^{(2i-1)/2} \textup{d}x
\end{equation*}
is of order $h^{i}$. Since $I'(h) = 0$, we have that all $d_{i} = 0, \ i { \le } n$. 

Coming back to Eq.~\eqref{eq/ff0}, we infer that $\tilde{u}(x,h)$ is differentiable at least $n-1$ times and hence so is $u(x,y)$. Since $n$ is arbitrary large, we conclude that $u(x,y)$ is smooth.
  \end{proof}

  \begin{remark}[\textit{Normal form of an elliptic singularity}]
In fact, we can go one step further and obtain a (unique) right normal form for a pair $(H = x^2+y^2, \omega)$
up to the right equivalence:
  $$
   (H = x^2+y^2, \omega_{norm}), \quad \omega_{norm} = f(y^2) \textup{d}x \wedge \textup{d}y,
  $$
  where $f$ is a smooth function of $y^2$; cf.\ Lemma~\ref{lemma/1}.  The germ of $f$ (restricted to the half-interval $[0, \infty)$) is a complete invariant of a one degree of freedom elliptic singularity up to the right symplectic equivalence. 
  To show this is indeed the case, consider the equation
    \begin{multline} \label{eq/smooth}
2\pi
I'(h) = \int_{-\sqrt{h}}^{\sqrt{h}} \dfrac{f(h-x^2)}{\sqrt{h-x^2}} \textup{d}x = \\
\int_{-1}^{1} \dfrac{f(h \cdot (1-x^2))}{\sqrt{1-x^2}} \textup{d}x = 
\int_0^h \dfrac{f(t)}{\sqrt{t}\sqrt{h-t}} \textup{d}t
\end{multline}
and for every $n \in \mathbb N$, the equation
 \begin{multline} \label{eq/smooth_2}
2\pi
I^{(n+1)}(h) = \int_{-1}^{1} f_n(h (1-x^2))(1-x^2)^{n-1/2} \textup{d}x =  \\ \frac{1}{h^{n}} 
\int_0^h \dfrac{f_n(t) t^{n-1/2}}{\sqrt{h-t}} \textup{d}t.
\end{multline}
Now observe that $I$ is a smooth function of $h \ge 0$ (this follows from \cite{DeVerdiere1979}, but
can also be proven directly, e.g., from Eq.~\eqref{eq/ff20}).
Because of this, each of the above equations \eqref{eq/smooth}-\eqref{eq/smooth_2} admits (see \cite{Evans1910}) a unique continuous solution $f_n$, which must then be equal to the $n$-th derivative $f^{(n)}$. This proves that there exists a unique smooth solution $f$ of Eq.~\eqref{eq/smooth}. It is left to apply Proposition~\ref{prop/elliptic}.

One can compare the right normal form $
  \omega_{norm} = f(y^2) \textup{d}x \wedge \textup{d}y
  $ to the more usual normal form
  $$
  \omega_{st} = f_{st}(x^2+y^2) \textup{d}x \wedge \textup{d}y.
  $$
given in the work \cite{DeVerdiere1979}. 
  These two normal forms are of course equivalent. Note that the proof that $\omega_{st}$ is a normal form is simpler because of the equality $f_{st}(h) = I'(h)/2$. 
  \end{remark}
  
  In the hyperbolic case $H = x^2 - y^2$, we have the following situation. First observe that the critical level set
  $H = x^2-y^2 = 0$ divides $\mathbb R^2$ into four quadrants (given by $|x| \le y, |y| \le x, y \le -|x|,$ and $x \le -|y|$).
  Let $\omega_1$ and $\omega_0$ be germs of symplectic forms.  Consider the area functions (local action variables corresponding to the two symplectic forms) in one of the quadrants:
$$
I_i(h) = \dfrac{1}{2\pi} \int\limits_{\{(x,y) \colon |y| \le x \le \sqrt{h+y^2}, \ |y|\le \varepsilon\}} \omega_i.
$$

Similarly to the proof of Proposition~\ref{prop/elliptic}, one can prove the following proposition (see \cite{Dufour1994}).
  
  \begin{proposition}[{see \cite{Dufour1994}}] \label{prop/hyperbolic} 
  Let $H = x^2-y^2$ and  $\omega_1, \omega_0$ be germs of symplectic forms at $O$. Assume that  the difference $I_1(h) - I_2(h)$ (corresponding to one of the four quadrants) extends to a smooth function near the origin. Then there exists a local diffeomorphism $\Phi$ such that
$H \circ \Phi = H$ and $\Phi^{*}(\omega_0) = \omega_1$.
  \end{proposition}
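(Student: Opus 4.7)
The strategy parallels that of Proposition~\ref{prop/elliptic}. By Lemma~\ref{lemma/move}, it suffices to construct a smooth germ $u$ at the origin satisfying $\omega_1-\omega_0 = du \wedge dH$; writing $\omega_1-\omega_0 = g(x,y)\,dx \wedge dy$, this becomes the linear first-order PDE
\[
-2y\,u_x - 2x\,u_y = g(x,y).
\]
I first apply the $x$-, then $y$-analogues of Example~\ref{example/moves}.2 to reduce to $g = g_e(x^2, y^2)$. Next I establish a hyperbolic analogue of Lemma~\ref{lemma/1}: iterated symplectic moves generated by the monomials $u_{ij} = x^{2j-1} y^{2i+1}$ (whose moves now produce terms of the form $-2(2j-1)x^{2j-2}y^{2i+2} - 2(2i+1)x^{2j}y^{2i}$, both of the same sign since $H_y = -2y$) bring $g$ to the normal form
\[
g(x,y) = \sum_{i=0}^{n} d_i\,y^{2i} + R(x^2,y^2)\,y^{2(n+1)}
\]
for arbitrarily large $n \in \mathbb{N}$. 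Each such symplectic move preserves the hypothesis, since by Stokes' theorem $\int_{R(h)} du_1 \wedge dH = \int_{\partial R(h)} u_1\, dH$ is smooth in $h$ (the hyperbolic boundary piece contributes nothing because $dH=0$ there, and the transverse pieces $y = \pm\varepsilon$ yield integrals smooth in $h$).

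The crucial new step is forcing the coefficients $d_i$ to vanish. Differentiating the area of the right quadrant under the integral sign,
\[
2\pi(I_1 - I_0)'(h) = \int_{-\varepsilon}^{\varepsilon} \frac{g(\sqrt{h+y^2},\,y)}{2\sqrt{h+y^2}}\,dy.
\]
Substituting the normal form, rescaling $y = \sqrt{h}\,t$, and using the convergent binomial expansion $(1+t^{-2})^{-1/2} = \sum_{k\ge 0} \binom{-1/2}{k} t^{-2k}$ for $t>1$, one obtains
\[
2\pi(I_1-I_0)'(h) = \psi(h) + \Bigl(\sum_{i=0}^{n} c_i\,d_i\,h^i\Bigr)\log h,
\]
with $\psi$ smooth at $h=0$ and explicit nonzero constants $c_i = -\tfrac{1}{2}\binom{-1/2}{i}$ (from the unique $k=i$ term that produces $\log s$ upon integration). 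Since the family $\{h^i \log h\}_{i\ge 0}$ is linearly independent modulo germs of $C^\infty$ functions at $h=0$, the assumed smoothness of $I_1 - I_0$ forces $d_i = 0$ for every $i \le n$.

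Once $g = R(x^2,y^2)\,y^{2(n+1)}$, the transport formula along characteristics, set up first on the open right quadrant (parametrizing level sets $\{H=h\}$ by $y$, so $x = \sqrt{h+y^2}$) and then extended across the separatrices $y = \pm x$ using the $(x,y)$-parity of $g$, produces a function $u$ whose joint regularity in $(x,y)$ grows with $n$ (the integrand behaves like a smooth function times $|t|^{2n+1}$ after cancellation of $\sqrt{h+t^2}$). Since $n$ is arbitrary, $u$ is smooth, and Lemma~\ref{lemma/move} completes the argument. I expect the main obstacle to be the second step: correctly identifying $h^i \log h$ (rather than the half-integer powers $h^{i-1/2}$ characteristic of the elliptic case) as the singular obstructions, and verifying their linear independence modulo $C^\infty$. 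A secondary subtlety is that, unlike in the elliptic case, the level sets of $H$ are disconnected away from the critical value, so the characteristic-wise construction only yields $u$ on a single quadrant; the preliminary symmetrization $g = g_e(x^2,y^2)$ is precisely what makes the four quadrant solutions glue into a single smooth germ.
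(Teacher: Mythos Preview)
Your plan is correct and is precisely what the paper intends: the paper offers no proof of this proposition beyond the remark that one argues ``similarly to the proof of Proposition~\ref{prop/elliptic},'' and your adaptation---replacing the elliptic vanishing-of-orders argument by the identification of the terms $c_i\,h^i\log h$ (with $c_i=-\tfrac12\binom{-1/2}{i}\ne 0$) as the non-smooth obstructions forced to vanish by the hypothesis---is the right one. One small imprecision worth flagging: the remainder contribution from $R(x^2,y^2)\,y^{2(n+1)}$ to $(I_1-I_0)'(h)$ is only $C^n$ in $h$, not $C^\infty$, but that is exactly enough to conclude $d_0=\cdots=d_n=0$, so the argument goes through unchanged.
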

  
  \begin{remark}[\textit{Normal form of a hyperbolic singularity}]
  In the hyperbolic situation, similarly to the elliptic case, we can transform a pair $(H = x^2-y^2, \omega)$ to the right normal form $(H = x^2-y^2, \omega_{norm})$ with
  $$
  \omega_{norm} = f(y^2) \textup{d}x \wedge \textup{d}y,
  $$
  where $f$ is smooth. But in this case, it is the Taylor series of $f$ at the origin that is a complete invariant
  of the right symplectic  equivalence. Note that the right normal form $
  \omega_{norm}$ is equivalent to the normal form
  $$
  \omega_{st} = f_{st}(x^2-y^2) \textup{d}x \wedge \textup{d}y,
  $$
  given in the work \cite{DeVerdiere1979}.
  
  \end{remark}
  
  \begin{remark} \label{remark/parametric}
  Using the parametric Morse lemma, it is not difficult to prove parametric versions of the above Propositions~\ref{prop/elliptic} and \ref{prop/hyperbolic}.
  \end{remark}

\subsection{The cusp singularity} \label{subsec/parabolic}

Similarly to the elliptic and hyperbolic cases, consider a fibration of $\mathbb R^2$ induced by the function 
$H = x^2 - y^3$; each $A_2$ singularity reduces to such a form up to an additive constant. Let $\omega_1$ and $\omega_0$ be two symplectic forms defined in a neighborhood of the origin in $\mathbb R^2$. 
Consider the region $R(h)$ in $\mathbb R^2$ enclosed by the $H = 0$ and $H = h$ level sets and
the two Lagrangian sections $x = \pm \varepsilon$; see Fig.~\ref{fig/fibration_lambda0}.
Define the area functions (local action variables corresponding to the two symplectic forms) by
$$
I_i(h) = \dfrac{1}{2\pi}
\int\limits_{(x,y) \in R(h)}
 \omega_i.
$$
\begin{proposition} \label{prop/cusp}
Let $H = x^2 - y^3$ and $\omega_1$ and $\omega_0$ be two symplectic forms. Suppose that the difference $I_1(h) - I_{0}(h)$ of action variables corresponding to these forms is a smooth function of $h$. Then 
$\omega_1/\omega_0 > 0$ and
there exists a local diffeomorphism $\Phi$ such that
$H \circ \Phi = H$ and $\Phi^{*}(\omega_1) = \omega_0$.
\end{proposition}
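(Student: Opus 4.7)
By Lemma~\ref{lemma/move}, it suffices to exhibit a smooth $u$ near the origin solving $-3y^2\partial_x u-2x\partial_y u=g(x,y)$, where $\omega_1-\omega_0=g(x,y)\,\textup{d}x\wedge\textup{d}y$; then the map $\Phi(Q)=\sigma_1^{u(Q)}(Q)$ provides the required $H$-preserving symplectomorphism, and its orientation-preservation will automatically yield $\omega_1/\omega_0>0$. Imposing the initial condition $\tilde u|_{x=-\varepsilon}=0$ gives the formula
$$\tilde u(x,h)=-\tfrac13\int_{-\varepsilon}^{x}\frac{g(t,(t^2-h)^{1/3})}{(t^2-h)^{2/3}}\,\textup{d}t,\qquad u(x,y)=\tilde u(x,H(x,y)),$$
so the plan mimics the elliptic case (Proposition~\ref{prop/elliptic}): first bring $g$ to a convenient finite-order normal form by symplectic moves, then use the action hypothesis to kill the remaining polynomial obstructions, and finally check that the leftover tail gives a smooth $u$.

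By Example~\ref{example/moves}.2 (applied with $f(y)=-y^3$) one may assume $g=g(x^2,y)$ is even in $x$. Paralleling Lemma~\ref{lemma/1}, a symplectic move with $u_{jk}=x^{2j+1}y^k$ changes $g$ by
$$-3(2j+1)\,x^{2j}y^{k+2}-2k\,x^{2j+2}y^{k-1}$$
(by Example~\ref{example/moves}.1), which lets one trade any factor $x^2$ for a factor $y^3$ modulo higher-order-in-$y$ corrections. Iterating, for every $n\in\mathbb N$ one can bring $g$ to the form
$$g(x^2,y)=\sum_{i=0}^{N}d_i\,y^i+R(x^2,y)\,y^{n+1}$$
with $N=N(n)$ and smooth $R$ after finitely many moves. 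Crucially, each such move changes the action difference $I_1-I_0$ only by the smooth boundary integral $-\int_{\partial R(h)}u_{jk}\,\textup{d}H$, so the hypothesis \textquotedblleft$I_1-I_0\in C^\infty$\textquotedblright\ is preserved throughout.

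The action hypothesis now enters through the identity $2\pi(I_1'-I_0')(h)=\tilde u(\varepsilon,h)$ from Section~\ref{sec/1dof_prelim}: for a monomial $g=d_i y^i$ one computes
$$\tilde u(\varepsilon,h)=-\tfrac{d_i}{3}\int_{-\varepsilon}^{\varepsilon}(t^2-h)^{(i-2)/3}\,\textup{d}t,$$
and the substitution $t=\sqrt h\,s$ on the range $|t|\le\sqrt h$ (for $h>0$ small) isolates a non-smooth contribution
$$c_i\,d_i\,h^{(2i-1)/6},\qquad c_i=-\tfrac13\int_{-1}^{1}(s^2-1)^{(i-2)/3}\,\textup{d}s\neq 0,$$
since the integrand has a definite sign on $(-1,1)$; the complementary integral over $\sqrt h\le|t|\le\varepsilon$ expands as a Taylor series in $h$. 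As the exponents $(2i-1)/6$ for $i=0,1,2,\dots$ are pairwise distinct non-integers, the germs $h\mapsto h^{(2i-1)/6}$ (defined for $h>0$) are linearly independent modulo smooth germs at $0$, forcing $d_0=\cdots=d_N=0$.

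With the residual $g=R(x^2,y)\,y^{n+1}$, the formula simplifies to
$$\tilde u(x,h)=-\tfrac13\int_{-\varepsilon}^{x}R(t^2,(t^2-h)^{1/3})\,(t^2-h)^{(n-1)/3}\,\textup{d}t,$$
whose integrand has regularity of order $\lfloor(n-1)/3\rfloor$ in $h$ across the locus $t^2=h$; differentiation under the integral sign yields $\tilde u\in C^{\lfloor(n-1)/3\rfloor}$, and hence the same regularity for $u(x,y)=\tilde u(x,H(x,y))$. Since $n$ is arbitrary and the PDE solution with the fixed initial condition is unique, this $u$ is $C^\infty$; Lemma~\ref{lemma/move} then produces $\Phi$, and orientation-preservation of $\Phi$ gives $\omega_1/\omega_0>0$. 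The hard part will be the asymptotic analysis of the previous paragraph, where one must verify both that every coefficient $c_i$ is nonzero and that the singular germs $h^{(2i-1)/6}$ are linearly independent from $C^\infty$ germs at $h=0$, so that the smoothness of $I_1-I_0$ genuinely forces every $d_i$ to vanish.
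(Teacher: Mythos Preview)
Your approach parallels the paper's, but there is a genuine gap in the asymptotic step. The claim that the complementary integral over $\sqrt{h}\le |t|\le \varepsilon$ ``expands as a Taylor series in $h$'' is false. Take $i=2$: then $(i-2)/3=0$, the full integral $\int_{-\varepsilon}^{\varepsilon}1\,\textup{d}t=2\varepsilon$ is constant in $h$, yet your inner piece equals $2\sqrt{h}$, so the outer piece is $2\varepsilon-2\sqrt{h}$, which is not smooth. The same phenomenon occurs for every $i\equiv 2\pmod 3$: the full integral $\int_{-\varepsilon}^{\varepsilon}(t^2-h)^k\,\textup{d}t$ is a polynomial in $h$, hence the action hypothesis \emph{cannot} force $d_{2+3k}=0$. (In fact the outer integral is not a Taylor series for $i\not\equiv 2\pmod 3$ either; e.g.\ for $i=3$ its $h$-derivative involves $\int_{\sqrt h}^{\varepsilon}(t^2-h)^{-2/3}\textup{d}t$, which diverges as $h\to 0^+$.) So neither the splitting nor the conclusion ``all $d_i=0$'' is correct as stated.

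The paper closes this gap in two moves. First, Lemma~\ref{lemma/2} strengthens the normal form to $g_n(x^2,y)=\sum d_i y^i+R\,y^{n+1}$ \emph{with} $d_{2+3k}=0$: these coefficients are killed by symplectic moves (e.g.\ the $u=x$ move removes $y^2$), precisely because they are invisible to the action argument. Second, the paper works on the side $h\le 0$, where $t^2-h\ge 0$ and no splitting is needed; there one has the clean decomposition $\int_{-\varepsilon}^{\varepsilon}(x^2-h)^{(i-2)/3}\textup{d}x=c_i(-h)^{(2i-1)/6}+f_i(h)$ with $f_i$ smooth and $c_i\ne 0$ exactly when $i\not\equiv 2\pmod 3$. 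The induction then forces the remaining $d_{3k},d_{3k+1}$ to vanish. Your argument can be repaired along either line: incorporate the extra normal form condition $d_{2+3k}=0$, or alternatively observe that the surviving $d_{2+3k}y^{2+3k}$ terms contribute the smooth polynomial $-\tfrac{d_{2+3k}}{3}\int_{-\varepsilon}^{x}(t^2-H)^k\textup{d}t$ to $u$ and hence need not vanish --- but in either case you must redo the asymptotic analysis of the full integral rather than rely on the incorrect splitting.
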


We will need the following lemma.

  \begin{lemma} \label{lemma/2}
  Let $H = x^2-y^3$ and  $\omega = g(x,y) \textup{d}x \wedge \textup{d}y$ be a symplectic form. For every $n \in \mathbb N$, there exists a symplectic move that transforms $\omega$ to the form
  $$\omega_{n} = g_n(x^2,y) \textup{d}x \wedge \textup{d}y, \ g_{n}(x^2,y) = \sum_{i=0}^{n} d_i y^{i} + R(x^2,y)y^{n+1},$$
  where $d_i = 0$ for $i = 2+3k$.
 In particular, there exists a local diffeomorphism $\Phi$ such that
$H \circ \Phi = H$ and $\Phi^{*}(\omega_n) = \omega$.
  \end{lemma}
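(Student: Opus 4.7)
The strategy mirrors Lemma~\ref{lemma/1}. With $H = x^2 - y^3$, one has $H_x = 2x$, $H_y = -3 y^2$, so a symplectic $u$-move changes the density $g$ by adding
$$
L(u) := 3 y^2 u_x + 2 x u_y.
$$
Since $H$ has the form $x^2 + f(y)$ with $f(y) = -y^3$, Example~\ref{example/moves}.2 provides an initial move reducing $g$ to its even-in-$x$ part, so I assume from the outset that $g(x,y) = g_e(x^2, y)$. The remainder of the proof is a finite-order algebraic reduction of $g_e$ using $L$.

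Write $g_e(x^2, y) = \sum_{i=0}^n g_i(x^2) y^i + R_0(x^2, y) y^{n+1}$ with each $g_i$ smooth, and process $i = 0, 1, \ldots, n$ in increasing order. At the $i$-th step, split the current $y^i$-coefficient as $g_i(0) + x^2 \tilde g_i(x^2)$ (possible by Hadamard's lemma) and apply the symplectic move with
$$
u_i = -\tfrac{1}{2(i+1)}\, \tilde g_i(x^2)\, x\, y^{i+1},
$$
for which a direct computation gives
$$
L(u_i) = -\tilde g_i(x^2)\, x^2 y^i \;-\; \tfrac{3}{2(i+1)}\bigl(2 x^2 \tilde g_i'(x^2) + \tilde g_i(x^2)\bigr)\, y^{i+3}.
$$
This kills the non-constant-in-$x^2$ part of the current $y^i$-coefficient, leaving it equal to the constant $d_i := g_i(0)$; it perturbs only the $y^{i+3}$-slot (by a smooth function of $x^2$, which is processed at step $i+3$, or absorbed into the remainder if $i + 3 > n$); and it leaves $y^k$ with $k < i$ untouched. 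After processing all $i \le n$, the density has the form $\sum_{i=0}^n d_i y^i + R_1(x^2, y) y^{n+1}$.

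It remains to eliminate $d_i$ when $i = 2 + 3k$. For each such $k$ with $2+3k \le n$, I construct coefficients $\alpha_0, \ldots, \alpha_k$ so that
$$
u_{(k)} := \sum_{l=0}^{k} \alpha_l \, x^{2l+1} y^{3(k-l)} \qquad\text{satisfies}\qquad L(u_{(k)}) = y^{2+3k}.
$$
Indeed, $L(x^{2l+1} y^{3(k-l)}) = 3(2l+1) x^{2l} y^{3(k-l)+2} + 6(k-l) x^{2l+2} y^{3(k-l)-1}$; matching the $y^{3k+2}$-coefficient gives $\alpha_0 = 1/3$, while matching the intermediate $x^{2l} y^{3(k-l)+2}$-coefficients yields the recursion $\alpha_l = -\frac{2(k-l+1)}{2l+1}\alpha_{l-1}$ for $1 \le l \le k$. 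The main (and, I expect, only nontrivial) point is that this recursion terminates cleanly at $l = k$: the potentially obstructing residual monomial $x^{2k+2} y^{-1}$ has coefficient $6(k-k) = 0$ and so vanishes automatically, which is precisely why $i \equiv 2 \pmod 3$ is the sole case when $y^i$ lies in the polynomial image of $L$. The move with $u = -d_{2+3k}\, u_{(k)}$ therefore removes $d_{2+3k} y^{2+3k}$ with no polynomial side effects. Combining all of the moves above and invoking Lemma~\ref{lemma/move} then produces the required $H$-preserving diffeomorphism $\Phi$.
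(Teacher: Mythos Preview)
Your proof is correct and follows essentially the same approach as the paper's: both reduce to the even-in-$x$ part via Example~\ref{example/moves}.2 and then use finite combinations of the monomial moves $u = x^{2j-1}y^{i+1}$ (your $u_i$ and $u_{(k)}$ are built from exactly these), exploiting that such a move links the $(j,i)$-slot to the $(j-1,i+3)$-slot. Your presentation is simply more explicit --- separating the reduction into two passes and spelling out the recursion and its clean termination at $l=k$ --- where the paper compresses this into ``summing the functions $u_{ij}$ and $u_i$, for $i\ge -1$, with appropriate coefficients''; your terminating term $\alpha_k x^{2k+1}$ is precisely the paper's $i=-1$ case.
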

  
  \begin{proof}  The proof is parallel to the proof of Lemma~\ref{lemma/1}. First of all, we can assume $g$ is even with respect to $x$. Next, the symplectic $u_{ij}$-move with $u_{ij} = x^{2j-1}y^{i+1}$ changes
$g$ by 
$$3(2j-1)x^{2j-2}y^{i+3} {+}
2(i+1)x^{2j}y^{i}.$$
The symplectic $u_{i}$-move with $u_i = r_i(x^2)x^{2m-1} y^{i+1}$ changes 
$g$ by
$$\tilde{r}_i(x^2)x^{2m-2}y^{i+3} {+}
2(i+1)r_i(x^2)x^{2m}y^{i}.$$
Summing the functions $u_{ij}$ and $u_i$, {for $i\ge -1 $}, with appropriate coefficients, we can transform the function
$g({x}
,y)$ to the form
$$\tilde{g}(x^2,y) = \sum_{i}^{n} d_i y^{i} + R(x^2,y)y^{n+1},$$
where $d_i = 0$ for $i = 2+3k,$
as required.
\end{proof}
  
We are now ready to prove the proposition.

\begin{proof}
As in the proof of Proposition~\ref{prop/elliptic}, let $\omega_1 - \omega_0 = g(x^2,y) \textup{d}x \wedge \textup{d}y$ for some smooth germ $g = g(x^2,y)$. By assumption, 
$$
I(h) = \dfrac{1}{2\pi} \int\limits_{(x,y) \in R(h)}
g(x^2,y) \textup{d}x\, \textup{d}y
$$
is a smooth function of $h$. We shall deduce from this that the  PDE
$$
3y^2 u_x + 2x u_y = - g(x^2,y)
$$
admits a smooth solution of the form
\begin{equation} \label{eq/u}
\tilde u(x,h) = \int_{-\varepsilon}^x \dfrac{g(t^2, (t^2-h)^{1/3})}{-3(t^2-h)^{2/3}} \textup{d}t.
\end{equation}
Note that here we fix the initial condition for $u$ to be $u(-\varepsilon,y) = 0$, where $\varepsilon >0 $ is
as above.
Because of Lemma~\ref{lemma/2}, we can assume
$$g(x^2,y) = \sum_{i=0}^{n} d_i y^{i} + R(x^2,y)y^{n+1},$$
where $d_i = 0$ for $i = 2+3k$. Then, up to a smooth function of $H$, we can write
\begin{multline} \label{eq/ff}
-3u(x,y) = \int_{-\varepsilon}^x  \sum_i d_i (t^2-h)^{(i-2)/3}\textup{d}t|_{h = x^2-y^3}+ \\ 
\int_{-\varepsilon}^{ x} 
R(t^2,(t^2-h)^{1/3})(t^2-h)^{(n-1)/3}\textup{d}t|_{h = x^2-y^3}.
\end{multline}
In particular, the corresponding definite integral
\begin{equation} \label{eq/ff2}
\int_{-\varepsilon}^{\varepsilon} \sum_i d_i  (x^2-h)^{(i-2)/3}\textup{d}x+  \int_{-\varepsilon}^{\varepsilon}  R(x^2,(x^2-h)^{1/3})(x^2-h)^{(n-1)/3}\textup{d}x
\end{equation}
is equal to the derivative $-{6\pi} I'(h)$. 

Observe that the second integral {in} 
\eqref{eq/ff2} is continuously differentiable with respect to
$h$ at least $[(n-1)/3]$ times. On the other hand, 
\begin{equation} \label{eq/decomposition_h}
\int_{-\varepsilon}^{\varepsilon} (x^2-h)^{(i-2)/3} \textup{d}x = c_i (-h)^{\frac{2i-1}{6}} + f_i(h), \quad h \le 0,
\end{equation}
where $f_i = f_i(h)$ is smooth and $c_i$ is a constant, which is zero for $i \equiv 2 \mod 3$ and non-zero for $i \not\equiv 2 \mod 3$; cf.\ \cite[Section 4]{Bolsinov2018}.
We infer that for $i = 3k+1$ and $i = 3k +3$, the integral is continuously differentiable exactly $k$ times. We shall now show that this implies 
$d_{3k} = 0$ and $d_{3k-2} = 0$ as long as $3k + 3 \le  n$. (Recall that $d_{2+3k}$ is zero by Lemma~\ref{lemma/2},
and so is the constant $c_{2+3k}$ by Eq.~\eqref{eq/decomposition_h}.) Proceeding inductively, we get that 
$$
d_{3k+3}\int_{-\varepsilon}^{\varepsilon} (x^2-h)^{k+1/3} \textup{d}x + d_{3k+1}\int_{-\varepsilon}^{\varepsilon} (x^2-h)^{k-1/3} \textup{d}x
$$
is continuously differentiable at least $k+1$ times for $3(k+2) \le n$. Using Eq.~\eqref{eq/decomposition_h}, we get that
$$
d_{3k+3} c_{3k+3} (-h)^{k+\frac{5}{6}} + d_{3k+1} c_{3k+1} (-h)^{k+\frac{1}{6}}, \quad h \le 0,
$$
must be $k+1$ times continuously differentiable for $3(k + 2) \le  n$. Since $c_{3k+3}$ and $c_{3k +1}$ are nonzero, this implies that $d_{3k+1} = 0$ and $d_{3k+3} = 0$ for 
$3(k + 2) \le n$. 

Coming back to Eq.~\eqref{eq/ff}, we infer that the first integral in the right hand side of this equation is, in fact, continuously differentiable at least $[(n-1)/3]-1$ times. But the same is true for the second integral. Since $n$ was arbitrary, we conclude that $u(x,y)$ is smooth.
\end{proof}

\begin{remark}[\textit{Normal form of a cusp singularity}]
  Observe that similarly to the hyperbolic case, we can transform a pair $(H = x^2-y^3, \omega)$ to the right normal form
  $$
  (H = x^2-y^3, \omega_{norm}), \quad \omega_{norm} = f(y) \textup{d}x \wedge \textup{d}y,
  $$
  where $f$ is a smooth function such that its Taylor coefficients $d_i$ vanish for $i \equiv 2 \mod 3$. In this case, the Taylor series of such a function $f$ is a complete invariant of the right symplectic equivalence.  This right normal form can be compared with the right normal form given in \cite{Bolsinov2018} in the analytic category.
  \end{remark}

\subsection{Global problem}

To conclude the section, we quote the well-known global results in the non-degenerate one degree of freedom case. Theorem~\ref{Theorem/symplectic_1dof} below can be obtained for instance using the results described above.  

Let $M^2$ be a compact $2$-dimensional surface (possibly with boundary) endowed with a volume form. Let $H$ be a smooth Morse function on this surface  with no critical points on the boundary $\partial M^2$. In particular, there are finitely many critical points of $H$. We shall moreover assume that $H$ is {\em simple}, that is, for each two distinct critical points $x_1, x_2 \in M^2$, we have $H(x_1) \ne H(x_2)$, and that the restriction of $H$ to the boundary $\partial M^2$ is locally constant.  

The function $H$ can be viewed as a Hamiltonian of a one degree of freedom Hamiltonian system on $M^2$. It gives rise to a singular fibration of $M^2$ into connected components of level sets $H^{-1}(h)$. Its regular fibers are circles. In other words, we have the (singular) fibration
$$\tilde{H} \colon M^2 \to B,$$ 
where $B$ is the Reeb graph (the bifurcation complex) of $H$. Note that this fibration is Lagrangian since the level sets $\tilde{H}^{-1}(h)$ are one-dimensional.

The symplectic classification (up to fiber-preserving symplectomorphisms) of such singular fibrations $\tilde{H} \colon M^2 \to B$
is well known; see, for instance, \cite{Vu-Ngoc2011, Dufour1994}. Basically, the only symplectic invariants of such fibrations are the action variables
$$
I(b) = \dfrac{1}{2\pi} \int\limits_{\tilde{H}^{-1}(b)} \alpha,
$$
where $\alpha$ is a primitive one-form for the symplectic structure (a one-form such that $\textup{d} \alpha$ is the symplectic form on $M^2$). More specifically, there is the following result.

\begin{theorem}[\textup{\cite{Vu-Ngoc2011, Dufour1994}}] \label{Theorem/symplectic_1dof}
Let $M^2_i$, $i = 1,2,$ be a symplectic compact surface and $H_i$ be a simple Morse function on $M^2_i$ whose restriction to $\partial M^2_i$ is locally constant. Consider the associated (singular) Lagrangian fibration $\tilde{H}_i \colon M^2_i \to B_i,$ where $B_i$ is the Reeb graph of $H_i$. Suppose
that there exists a diffeomorphism $\phi \colon B_1 \to B_2$ preserving action variables, that is, for every $b \in B_1$ and some choice\footnote{In a neighbourhood of every fiber $\tilde{H}^{-1}_i(b)$, a primitive one-form always exists.} of primitive one-forms $\alpha_1$ and $\alpha_2$ for the symplectic structures in neighbourhoods
of $\tilde{H_1}^{-1}(b)$ and $\tilde{H_2}^{-1}(\phi(b))$, we have  
$$I_1 = I_2 \circ \phi + c,$$ 
where
$I_i = \dfrac{1}{2\pi} \int\limits_{\tilde{H}^{-1}_i(\cdot)} \alpha_i$ and $c$ is a constant.
Then the diffeomorphism $\phi$ can be lifted to a fibration-preserving symplectomorphism $\Phi \colon M^2_1 \to M^2_2.$ 
\end{theorem}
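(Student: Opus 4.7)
The plan is to reduce the statement to the local Propositions~\ref{prop/elliptic} and \ref{prop/hyperbolic} via a global version of Moser's method (Lemma~\ref{lemma/move}), assembling a global primitive from local ones by a partition of unity in the variable $H_1$. I first build a smooth fibre-preserving lift $\Phi_0 \colon M^2_1 \to M^2_2$ of $\phi$: this is possible because a singular $S^1$-fibration of a compact surface is topologically classified by its Reeb graph decorated with vertex types (elliptic or hyperbolic), and these are preserved by $\phi$. After replacing $\Phi_0$, on each edge of $B_1$, by its composition with an appropriate time-$c$ Hamiltonian flow of $H_1$, the hypothesis $I_1 = I_2 \circ \phi + c$ becomes $I_1 = I_2 \circ \phi$ on every edge. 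Writing $\omega_0 := \Phi_0^* \omega_2$, it then suffices to produce an $H_1$-preserving symplectomorphism $\Psi$ with $\Psi^*\omega_0 = \omega_1$; the composition $\Phi := \Phi_0 \circ \Psi$ will be the required lift of $\phi$.

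The proof of Lemma~\ref{lemma/move} goes through globally on the compact surface $M^2_1$ (the time-dependent vector field $X_t$ defined by $i_{X_t}\omega_t = -u\,\textup{d}H_1$ is tangent to $\partial M^2_1$ since $\textup{d}H_1$ vanishes there, so its flow is well defined for $t \in [0,1]$), reducing the existence of $\Psi$ to finding a single smooth function $u \colon M^2_1 \to \mathbb R$ satisfying
\begin{equation*}
\textup{d}u \wedge \textup{d}H_1 = \omega_1 - \omega_0.
\end{equation*}
To construct $u$, I cover $H_1(M^2_1) \subset \mathbb R$ by a locally finite family of intervals $\{U_\alpha\}$, each containing at most one critical value of $H_1$, and on every connected component $V_\alpha^{(j)}$ of $\tilde H_1^{-1}(U_\alpha)$ I obtain a local smooth solution $u_\alpha^{(j)}$ of this PDE: on a regular annulus (including components along $\partial M^2_1$) by taking the difference of the angle functions in action--angle coordinates for $\omega_1$ and $\omega_0$, which agree because the actions coincide as functions of $H_1$; on an elliptic component by Proposition~\ref{prop/elliptic}; on a hyperbolic component by applying Proposition~\ref{prop/hyperbolic} in each of the four quadrants of a Morse chart $H_1 = x^2 - y^2$ and concatenating the four quadrant primitives. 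Fixing a partition of unity $\{\tilde\rho_\alpha\}$ on $\mathbb R$ subordinate to $\{U_\alpha\}$ and setting $\rho_\alpha := \tilde\rho_\alpha \circ H_1$ together with $u_\alpha := u_\alpha^{(j)}$ on $V_\alpha^{(j)}$, the globally smooth function $u := \sum_\alpha \rho_\alpha u_\alpha$ satisfies
\begin{equation*}
\textup{d}u \wedge \textup{d}H_1 = \sum_\alpha \rho_\alpha\,\textup{d}u_\alpha \wedge \textup{d}H_1 = \omega_1 - \omega_0,
\end{equation*}
because each $\textup{d}\rho_\alpha = \tilde\rho_\alpha'(H_1)\,\textup{d}H_1$ wedges to zero against $\textup{d}H_1$.

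The main obstacle is the hyperbolic vertex. One must verify that the edge-wise hypothesis $I_1 = I_2 \circ \phi$ (on the three edges of $B_1$ meeting a hyperbolic vertex) is strong enough to imply the hypothesis of Proposition~\ref{prop/hyperbolic} in each of the four quadrants of a Morse chart around the critical point. This is a combinatorial identification of the four quadrant actions with the three edge actions, cleanly handled thanks to simplicity of $H_1$ (each critical fibre carries a single critical point), with any residual constants absorbed into $\Phi_0$ during the preliminary step. A similar but easier check is needed at elliptic vertices and at boundary components of $M^2_1$.
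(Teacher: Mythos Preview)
The paper does not actually prove Theorem~\ref{Theorem/symplectic_1dof}: it is quoted from \cite{Vu-Ngoc2011, Dufour1994}, with only the remark that it ``can be obtained for instance using the results described above''. Your overall strategy---pull back $\omega_2$ via an initial fibre-preserving lift, then run a global Moser argument with a primitive $u$ assembled from local solutions by a partition of unity that factors through $H_1$---is exactly in the spirit of that remark and is the standard route.

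Two points deserve correction. First, the preliminary step ``replacing $\Phi_0$ by its composition with a time-$c$ Hamiltonian flow of $H_1$'' does not do what you claim: that flow is a symplectomorphism of $(M_1,\omega_1)$ and leaves every action unchanged, so it cannot kill the constants $c$. Fortunately the step is unnecessary: only $dI_1 = d(I_2\circ\phi)$ enters the Moser argument, and the additive constants on the three edges at a hyperbolic vertex are automatically compatible (they are the periods of $\alpha_1-\alpha_0$ over the two loops of the figure-eight and their sum).

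Second, and more substantively, the hyperbolic case is not settled by ``applying Proposition~\ref{prop/hyperbolic} in each of the four quadrants and concatenating the four quadrant primitives''. Proposition~\ref{prop/hyperbolic} is a statement about a full neighbourhood of the saddle, with a hypothesis on the action in \emph{one} quadrant; it is not a result you can invoke quadrant-by-quadrant and then glue across the separatrices. What you must show is that the edge-wise hypothesis forces the difference of quadrant actions for $\omega_1$ and $\omega_0$ to extend smoothly across the critical value. You flag this as ``the main obstacle'' but then only assert it is ``cleanly handled''. A genuine argument is needed here: for instance, the Taylor-series invariant of the normal form (the Remark after Proposition~\ref{prop/hyperbolic}) is determined by the singular part of the action on a single adjacent edge, and the global action on that edge equals (twice) the quadrant action plus a smooth contribution from outside the Morse chart; hence equality of global actions up to constants forces equality of the Taylor invariants of $\omega_1$ and $\omega_0$, which is exactly the hypothesis of Proposition~\ref{prop/hyperbolic}. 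You also need the resulting local $u$ to extend over the whole singular fibre (not just the Morse chart), which follows because the monodromy of $u$ around each regular circle is the derivative of the action difference and hence vanishes. Without these verifications, the sketch has a real gap at the saddle points.
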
 

\begin{remark}
Here $\phi \colon B_1 \to B_2$ is a diffeomorphism  in the following sense:
$\phi$ can be lifted to a (not-necessarily canonical) diffeomorphism between the total
spaces $M^2_1$ and $M^2_2.$ 
\end{remark}

\begin{remark}
We note that the converse statement is obviously true; thus, the preservation of action variables is a necessary and sufficient statement for the existence of a fibration-preserving symplectomorphism $\Phi \colon M^2_1 \to M^2_2.$ 

We also note that a parametric version of Theorem~\ref{Theorem/symplectic_1dof} holds as well: if there is a pair of families of integrable systems smoothly depending on a parameter $\lambda$, then the existence of a smooth family of 
diffeomorphisms  $\phi_\lambda \colon B_{1, \lambda} \to B_{2, \lambda}$ implies the existence of a smooth family of symplectomorphisms $\Phi_\lambda \colon M^2_{1, \lambda} \to M^2_{2, \lambda}$ lifting $\phi_\lambda$. 
\end{remark}

\begin{remark}

We shall later apply this theorem to a special case of Morse functions having (at most) one minimum and one saddle point. Such a situation arises in the  integrable subcritical Hamiltonian Hopf bifurcation when one performs a symplectic reduction with respect to a global Hamiltonian circle action; it will be discussed in the Section~\ref{sec/HamHopf}.

\end{remark}

\section{Parabolic orbits in two degrees of freedom} \label{sec/cusp}

In this section, we shall prove a parametric version of Proposition~\ref{prop/cusp} and apply it to the local  symplectic classification of parabolic orbits in two
degrees of freedom systems; see Theorems~\ref{theorem_1} and \ref{theorem/parabolic_classification_c}. We prove these results in the smooth $C^\infty$ category (for the analytic case, see \cite{Bolsinov2018}). We will also give a symplectic normal form of a parabolic orbit in the analytic category.

Let $H = x^2 - y^3 + \lambda y$. We view this as a one degree  of freedom Hamiltonian, depending on the additional parameter $\lambda$. Let $\omega_{1,\lambda}$ and $\omega_{2,\lambda}$ be two symplectic forms in $\mathbb R^2$, smoothly depending on the parameter $\lambda$. Denote the difference $\omega_{1,\lambda} - \omega_{2,\lambda}$ by $g(x,y,\lambda) \textup{d}x\wedge \textup{d}y$. By the assumption, the function $g = g(x,y,\lambda)$ is smooth with respect to all of the variables $x,y$ and $\lambda$.

Observe that for $\lambda > 0$, the equation $H = h$ gives rise to a family of vanishing cycles projecting to the swallow-tail domain
\begin{equation*} 
D =  {\{(h,\lambda) \colon \lambda > 0,
h^2 < 4(\lambda/3)^3\}}. 
\end{equation*}
Consider the area functions (action variables) $I^\circ_1 = I^\circ_1(h, \lambda)$ and $I^\circ_2 = I^\circ_2(h, \lambda)$ corresponding to these vanishing cycles.

\begin{theorem} \label{theorem_1}
Suppose that the area functions $I^\circ_1(h,\lambda)$ and $I^\circ_2(h, \lambda)$, corresponding to the family of vanishing cycles on the swallow-tail domain $D$, coincide. Then $\omega_{1,\lambda}/\omega_{2,\lambda} > 0$ and there exists a family of diffeomorphisms $\Phi_\lambda$, smoothly depending on the parameter $\lambda$, such that
$H \circ \Phi_\lambda = H$ and $\Phi_\lambda^{*}(\omega_{2,\lambda}) = \omega_{1,\lambda}$.
\end{theorem}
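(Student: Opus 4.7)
Following the strategy of Proposition~\ref{prop/cusp}, the plan is to treat the problem as a linear PDE in $(x,y)$ with $\lambda$ as a smooth parameter. By a parametric version of Lemma~\ref{lemma/move} (Moser's path method, smooth in $\lambda$), it suffices to produce a smooth function $u(x,y,\lambda)$ solving
$$d_{x,y}u \wedge d_{x,y}H = \omega_{1,\lambda}-\omega_{2,\lambda},$$
where $d_{x,y}$ is the partial exterior differential in $(x,y)$; the required family $\Phi_\lambda$ then arises from the explicit flow formula used in Lemma~\ref{lemma/move} and inherits smooth $\lambda$-dependence from $u$ and the $\omega_{i,\lambda}$. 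The positivity $\omega_{1,\lambda}/\omega_{2,\lambda}>0$ is an a~posteriori consequence of $I_1^\circ=I_2^\circ$: any sign change arbitrarily near the parabolic orbit would, on a small vanishing-cycle disk for some $(h,\lambda)\in D$ nearby, produce disagreeing signed areas.

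Write $\omega_{1,\lambda}-\omega_{2,\lambda}=g(x,y,\lambda)\,dx\wedge dy$. First apply the parametric analogue of Example~\ref{example/moves}.2 to reduce to $g=g(x^2,y,\lambda)$. Then carry out the reductions of Lemma~\ref{lemma/2} with $\lambda$-dependent coefficients in the $u_{ij}$-moves. The only new feature is that $\partial_y H=-3y^2+\lambda$ contributes in each $u_{ij}$-move an extra term $-(2j-1)\lambda\,x^{2j-2}y^{i+1}$; running the reduction in order of increasing $y$-degree, these $\lambda$-corrections are harmlessly absorbed into the $y$-only coefficients produced at earlier stages. The upshot is that for arbitrarily large $n$ we may assume
$$g(x^2,y,\lambda)=\sum_{i=0}^{n} d_i(\lambda)\,y^i+R(x^2,y,\lambda)\,y^{n+1},$$
with $d_i$ and $R$ smooth and with $d_i(\lambda)\equiv 0$ for every $i\equiv 2\pmod 3$.

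The heart of the proof is to deduce from $I_1^\circ\equiv I_2^\circ$ on $D$ that the remaining coefficients $d_i(\lambda)$ vanish identically. Writing
$$2\pi\bigl(I_1^\circ(h,\lambda)-I_2^\circ(h,\lambda)\bigr)=\int_{\Delta(h,\lambda)} g(x^2,y,\lambda)\,dx\,dy=0,$$
where $\Delta(h,\lambda)$ is the disk enclosed by the vanishing cycle, I would compute each monomial contribution $\int_{\Delta(h,\lambda)} y^i\,dx\,dy$ by slicing as in Eq.~\eqref{eq/ff2} and identify a precise fractional power of $h-H_{\min}(\lambda)=h+2(\lambda/3)^{3/2}$ (of the type in Eq.~\eqref{eq/decomposition_h}), plus a smooth remainder. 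For the non-forbidden indices $i$, these fractional-power functions are linearly independent on any open subset of $D$, so identical vanishing on $D$ forces $d_i(\lambda)\equiv 0$ for every non-forbidden $i\le n$. Only the remainder $R\,y^{n+1}$ then remains, and the explicit integral formula analogous to Eq.~\eqref{eq/u} produces a $C^{[(n-1)/3]}$ solution $u$; since $n$ is arbitrary, $u$ is smooth in $(x,y,\lambda)$.

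The main obstacle I foresee is this last step: executing the $(h,\lambda)$-spectral analysis uniformly as the vanishing cycle degenerates at $\lambda=0$. In Proposition~\ref{prop/cusp} the non-smoothness was encoded by powers of $(-h)^{1/6}$ expanded about a single cuspidal point, whereas here one needs a joint expansion in $h$ and $\lambda$ on the full swallow-tail domain together with a corresponding linear-independence statement for the monomial contributions on $D$. Proving that linear-independence statement carefully is the main technical work, and it replaces the derivative-order bookkeeping used at the end of the proof of Proposition~\ref{prop/cusp}.
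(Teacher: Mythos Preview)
Your strategy is the natural parametric extension of Proposition~\ref{prop/cusp}, but the obstacle you flag at the end is genuine and, as stated, cannot be overcome by the fractional-power mechanism of Eq.~\eqref{eq/decomposition_h}. For each fixed $\lambda>0$ the vanishing cycle surrounds a \emph{non-degenerate elliptic} critical point, so the monomial integrals $\int_{\Delta(h,\lambda)}y^i\,dx\,dy$ are smooth functions of $h$ on $D$: there are no fractional powers of $h-H_{\min}(\lambda)$ to separate the indices, and the remainder $R\,y^{n+1}$ contributes at every Taylor order in $h-H_{\min}$, so you cannot isolate the $d_i(\lambda)$ from the remainder. The $(-h)^{1/6}$ phenomenon is purely cuspidal (it lives at $\lambda=0$ only). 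A secondary issue is that your parametric version of Lemma~\ref{lemma/2} is shakier than you indicate: the extra $\lambda$-term mixes $y$-degrees mod $3$, so enforcing $d_{2+3k}\equiv0$ is no longer the clean triangular reduction it was at $\lambda=0$.

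The paper avoids the spectral analysis entirely by expanding in $x$ rather than $y$. With the initial condition $u(0,y,\lambda)=0$ one has $u=x\,a(x^2,y,\lambda)$; comparing Taylor coefficients of the PDE in $x^2$ gives a recursion of the form $(3y^2-\lambda)a_0=g_0$ and $2\partial_y a_{k-1}+(\mathrm{const})(3y^2-\lambda)a_k=g_k$. For every fixed $\lambda\ne0$ the function $u$ is already known to be $C^\infty$ in $(x,y)$ by Propositions~\ref{prop/elliptic} and~\ref{prop/hyperbolic} (this is where $I_1^\circ=I_2^\circ$ first enters), so each $a_k(\cdot,\lambda)$ is smooth; this forces the right-hand sides to vanish on $\{3y^2=\lambda\}$, and Hadamard's lemma/Malgrange preparation then gives each $a_k$ a smooth extension across $\lambda=0$. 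Subtracting $x\sum_{k\le N}a_k x^{2k}$ reduces to $g=x^{2N+2}G_1$. For such $g$ the paper writes $u$ as an integral in $y$ with lower limit an implicitly defined function $f(x,y,\lambda)$; the integrand carries the factor $|h+t^3-\lambda t|^{N+1/2}$, which already absorbs $N$ derivatives, and the only obstruction to $u\in C^N$ is the jump of $f$ across $\{3f^2=\lambda\}$. The equality $I_1^\circ=I_2^\circ$ is used a second (and final) time precisely to kill that jump for $\lambda>0$, while at $\lambda=0$ the jump tends to $0$ uniformly. Thus the action equality enters as a boundary-matching condition, not through any linear-independence argument, and no analogue of ``$d_i(\lambda)=0$'' is ever needed.
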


First, we observe that in the case when $\omega_{1,\lambda} - \omega_{2,\lambda} = g(x,y,\lambda) \textup{d}x\wedge \textup{d}y$ is such that $g(x,y,\lambda)$ is an odd function of $x$, then the required diffeomorphism always exists. 

\begin{lemma} \label{lemma_2}
Let $\omega_{1,\lambda} - \omega_{2,\lambda} = g(x,y,\lambda) \textup{d}x\wedge \textup{d}y$ be such that $g(x,y,\lambda)$ is an odd function of $x$. Then $g(x,y,\lambda) = u_x H_y - u_y H_x$ for some smooth function $u = u(x,y,\lambda)$. In particular,  there exists a smooth family of diffeomorphisms $\Phi_\lambda$ such that
$H \circ \Phi_\lambda = H$ and $\Phi_\lambda^{*}(\omega_{2,\lambda}) = \omega_{1,\lambda}$.
\end{lemma}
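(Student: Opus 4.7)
The plan is to construct $u$ explicitly, following the same recipe as Example~\ref{example/moves}.2 but now depending smoothly on the parameter $\lambda$. Since $g$ is smooth and odd in $x$, Hadamard's lemma provides a smooth function $\tilde g = \tilde g(\xi,y,\lambda)$ with $g(x,y,\lambda) = x\, \tilde g(x^2,y,\lambda)$. Writing $H = x^2 + f(y,\lambda)$ with $f(y,\lambda) = \lambda y - y^3$, I will look for a solution of
$$
u_x H_y - u_y H_x \;=\; g(x,y,\lambda)
$$
of the particular form $u(x,y,\lambda) = v(y,\,H(x,y,\lambda),\,\lambda)$, reducing the problem to an ODE for $v$.

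A direct computation gives $u_x = 2x\, v_h$ and $u_y = v_y + f'(y,\lambda)\, v_h$, where the subscripts denote partial derivatives with respect to the corresponding argument of $v$. Substituting,
$$
u_x H_y - u_y H_x \;=\; 2x f'(y,\lambda)\, v_h - 2x\bigl(v_y + f'(y,\lambda)\, v_h\bigr) \;=\; -2x\, v_y\bigl(y, H(x,y,\lambda), \lambda\bigr).
$$
The PDE thus reduces to $-2\, v_y(y,h,\lambda) = \tilde g(h - f(y,\lambda),\, y,\, \lambda)$, which can be integrated with the initial condition $v(0,h,\lambda) = 0$ to produce
$$
v(y,h,\lambda) \;=\; -\tfrac{1}{2}\int_0^y \tilde g\bigl(h - f(t,\lambda),\, t,\, \lambda\bigr)\, \textup{d}t.
$$
This $v$ is manifestly smooth in $(y,h,\lambda)$, so $u(x,y,\lambda) = v(y, H(x,y,\lambda), \lambda)$ is smooth in $(x,y,\lambda)$, proving the first claim of the lemma.

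To obtain the second claim, I apply a parametric version of Lemma~\ref{lemma/move} to the forms $\omega_{1,\lambda}$ and $\omega_{2,\lambda}$ with the smooth family of functions $u(\cdot,\cdot,\lambda)$ just constructed, producing a smooth family of $H$-preserving diffeomorphisms $\Phi_\lambda$ with $\Phi_\lambda^{*}(\omega_{2,\lambda}) = \omega_{1,\lambda}$. The parametric version is immediate from either of the two proofs of Lemma~\ref{lemma/move}: every object involved (the time-dependent vector field $X_t = \omega_t^{-1}(-u\, \textup{d}H)$ and its flow, or equivalently the explicit formula $\Phi_\lambda(Q) = \sigma_{2,\lambda}^{u(Q,\lambda)}(Q)$ in terms of the Hamiltonian flow of $H$ with respect to $\omega_{2,\lambda}$) depends smoothly on $\lambda$. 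There is essentially no obstacle here: the odd-in-$x$ hypothesis is precisely what causes the $v_h$ terms to cancel, leaving a right-hand side that, after the substitution $x^2 = H - f(y,\lambda)$, is independent of $x$ and thus amenable to the explicit integration above.
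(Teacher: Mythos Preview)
Your proof is correct and follows essentially the same route as the paper: write $g(x,y,\lambda)=x\,g_1(x^2,y,\lambda)$, look for $u$ as a function of $(y,H,\lambda)$, and integrate in $y$ to obtain the explicit smooth solution $u=-\tfrac12\int^{y} g_1(h+t^3-\lambda t,\,t,\,\lambda)\,\textup{d}t\big|_{h=H}$, then invoke (a parametric version of) Lemma~\ref{lemma/move}. The only cosmetic difference is your choice of base point $y=0$ versus the paper's $y=\varepsilon$ in the integral, which is immaterial.
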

\begin{proof}
Consider the equation $\textup{d}u \wedge \textup{d}H = \omega_{1,\lambda} - \omega_{2,\lambda}$, where $u = u(x,y,\lambda)$. It is equivalent to the equation
$$
-2x u_y + (\lambda - 3y^2) u_x = g(x,y,\lambda).
$$ 
Since $g(x,y,\lambda)$ is odd with respect to $x$, it can be written as $g(x,y,\lambda) = xg_1(x^2,y,\lambda)$, where $g_1$ is a smooth function.
It is left to observe that the function 
$$u = \tilde{u}(h,y,\lambda)\big|_{h = x^2-y^3+\lambda y} = \frac{-1}{2}\int_{\varepsilon}^y g_1(h+t^3-\lambda t,t,\lambda)\textup{d}t\big|_{h = x^2-y^3+\lambda y}$$
is a smooth solution. The existence of $\Phi_\lambda$ follows from Lemma~\ref{lemma/move}.
\end{proof}

Because of Lemma~\ref{lemma_2}, we can and will henceforth assume that 
$\omega_{1,\lambda} - \omega_{2,\lambda} = g(x^2,y,\lambda) dx \wedge dy$. Choosing the initial condition $u(0,y, \lambda) = 0$, we construct the required function $u$ for each value of the parameter $\lambda$ separately. For $\lambda = 0$, it is simply given by the formula (see Subsection~\ref{subsec/parabolic})
$$
u(x,y,0) = \int_0^x \dfrac{g(t^2, 
(t^2-h)^{1/3}, 0)}{-3(t^2-h)^{2/3}} \textup{d}t \big|_{h = H(x,y,0)}.
$$
For fixed $\lambda \ne 0$, we use the equality of the action variables $I^\circ_1(h,\lambda) = I^\circ_2(h, \lambda)$ for the family of vanishing cycles together with Propositions~\ref{prop/elliptic} and \ref{prop/hyperbolic}. We observe that the initial condition $u(0,y, \lambda) = 0$ defines $u$ unambiguously for all $\lambda$ and that Propositions~\ref{prop/elliptic} and \ref{prop/hyperbolic} imply $u$ is $C^{\infty}$ with respect to $(x,y)$ for all fixed $\lambda \ne 0$.

\begin{remark}In fact, the function $u$ is
$C^{\infty}$ with respect to all variables $(x,y, \lambda)$ outside the plane $\lambda = 0$ (see Remark~\ref{remark/parametric}), but we shall prove this independently. As we shall see, smoothness of $u$ on the plane $\lambda = 0$ with respect to $(x,y)$ is not needed for the proof. 
\end{remark}

Since $g$ is even with respect to $x$, the initial condition $u(0,y, \lambda) = 0$ implies $u = x a(x^2,y,\lambda)$, where the function $a$ is $C^{\infty}$ with respect to $(x^2,y)$ for fixed $\lambda \ne 0$.
Consider the Taylor expansion of the function $a$  with respect to the variable $x$:
$$
a = \sum^{N}_{k = 0} a_{k}(y,\lambda) x^{2k} + x^{2N+2} A(x^2,y,\lambda)
$$
and similarly for the function $g$:
$$
g = \sum^{N}_{k = 0} g_{k}(y,\lambda) x^{2k} + x^{2N+2} G(x^2,y,\lambda).
$$
Next we observe that $a$ solves the equation
$$
2x^2 \partial_y a + (3y^2-\lambda) (2x^2 \partial_{x^2} a + a) = g(x^2,y,\lambda).
$$
From this, we get that $(3y^2-\lambda) a_0 = g_0$ and for $k = 1, \ldots, N$,
$$
2 \partial_y a_{k-1} + (2k+3)(3y^2-\lambda) a_k = g_k.
$$
It follows (from Hadamard's lemma or Malgrange's preparation theorem) 
that all of the functions $a_k, k = 0, \ldots, N$ are smooth when $\lambda \ne 0$ and have a smooth extension to $\lambda = 0$. Hence, the problem reduces to the case
when
$$
g = x^{2N+2} (G(x^2,y,\lambda)-2a_N) = x^{2N+2} G_1(x^2,y,\lambda).
$$ 
But in this case, one can use the formula
\begin{equation} \label{formula/for_u} 
u = \textup{sign}(x) \big( \int_{f(x,y,\lambda)}^y  |h+t^3-\lambda t|^{N+\frac{1}{2}}G_1(h+t^3-\lambda t,t,\lambda)\textup{d}t\big)|_{h = H},
\end{equation}
where the function $f$ is such that the initial condition $u(0,y, \lambda) = 0$ is satisfied; specifically,
$f(x_0,y_0,\lambda)$ is defined as follows: starting from the point $(x_0,y_0) \in \mathbb R^2$, follow (the connected component of) the curve 
$$\{(x,y) \in \mathbb R^2 \colon H(x,y, \lambda) = H(x_0,y_0, \lambda)\}$$ 
in the $-y$ direction until the intersection with
the $y$-axis (so that, in particular, $H(0,f(x,y,\lambda),\lambda) = H(x,y,\lambda)$).

Observe that by the implicit function theorem, the function $f$ is smooth on the set $3f(x,y,\lambda)^2 - \lambda \ne 0.$ Hence, we can differentiate \eqref{formula/for_u} $N$ times on this set using the chain rule. Moreover, $H(0,f(x,y,\lambda),\lambda) = H(x,y,\lambda)$ implies we can ignore differentiation with respect to $f$ in the chain rule. For example,
\begin{multline*}
\partial_{\lambda} u =   \textup{sign}(x)\Big(-\partial_{\lambda} f \cdot 0 \ + \\
 \big(\int_{f(x,y,\lambda)}^y  \partial_\lambda \big(|h+t^3-\lambda t|^{N+\frac{1}{2}}G_1(h+t^3-\lambda t,t,\lambda)\big)\textup{d}t \big)\big|_{h = H} + \\
  \big(\int_{f(x,y,\lambda)}^y  \partial_h \big(|h+t^3-\lambda t|^{N+\frac{1}{2}}G_1(h+t^3-\lambda t,t,\lambda)\big)\textup{d}t\big)\big|_{h = H} H_\lambda \Big).
 \end{multline*}
This gives that
the partial derivatives of $u$ up to order $N$ exist when $3f(x,y,\lambda)^2 - \lambda \ne 0$ and are continuous on this set. 
We claim that all these partial derivatives extend continuously to the critical set  $3f(x,y,\lambda)^2 - \lambda = 0$. To show this, recall that derivatives of $f$ do not appear in the formula for partial derivatives of $u$ and that the integrand 
$$
|h+y^3-\lambda y|^{N+\frac{1}{2}}G_1(h+y^3-\lambda y,y,\lambda)
$$
and its derivatives up to order $N$ are everywhere continuous. Thus, the only discontinuity may arise from the discontinuity of the function $f$ on the set $3f(x,y,\lambda)^2 - \lambda = 0.$ Consider two cases: the case $\lambda \ne 0$ and the case $\lambda = 0$. For $\lambda \ne 0$, we can write
\begin{multline*}
u = \textup{sign}(x) \big( \int_{f(x,y,\lambda)}^{\sqrt{\lambda/3}}  |h+t^3-\lambda t|^{N+\frac{1}{2}}G_1(h+t^3-\lambda t,t,\lambda)\textup{d}t \big)|_{h = H} + \\
\textup{sign}(x) \big( \int_{\sqrt{\lambda/3}}^y  |h+t^3-\lambda t|^{N+\frac{1}{2}}G_1(h+t^3-\lambda t,t,\lambda)\textup{d}t \big)|_{h = H}.
 \end{multline*}
The equality of the action variables on the vanishing cycles implies that (the derivatives of) the first term in this sum tends to zero as we approach the critical set $3f(x,y,\lambda)^2 - \lambda = 0$. Furthermore, the terms appearing from the differentiation of $\sqrt{\lambda/3}$ will vanish in the limit, as  $\lambda$ is bounded away from zero and the integrand  
and its derivatives up to order $N$ are small near the origin. This proves the claim for $\lambda \ne 0$. 

Now assume $\lambda = 0$. In this case, it is sufficient to use formula \eqref{formula/for_u}: the jump of the function $f$ on the set $3f(x,y,\lambda)^2 - \lambda = 0$ tends to zero uniformly.
We conclude from that $u \in C^N$ is everywhere. Since $N$ can be made arbitrary large, Theorem~\ref{theorem_1} follows. \hfill $\square$

Theorem~\ref{theorem_1}  has an important corollary about local symplectic invariants of parabolic orbits in two
degrees of freedom systems. 
More specifically, consider a pair of two-degree of freedom integrable systems $
F_i \colon M_i \to \mathbb R^2, i = 1, 2,
$
with periodic orbit $\alpha_i$ of parabolic type. Let $V_i \simeq D^3 \times S^1$ be a sufficiently small neighbourhood of $\alpha_i$. By \cite{KudryavtsevaMartynchuk2021}, there exist  functions $H_i,J_i$ and coordinates $(x_i,y_i,\lambda_i, \varphi_i) \in D^3 \times S^1$
such that 

\begin{itemize}
\item[i)] $H_i = x_i^2 - y_i^3 + \lambda_i y_i$ and $J_i = \lambda_i$ are constant on the connected components of $F_i$, moreover $H_i$ is a function of $F_i$;

\item[ii)] $J_i$ is a smooth $2\pi$-periodic first integral. 

\end{itemize}

Finally, consider the action variable $I_i^\circ$ 
on the swallow-tail domain
$$
D_i = F_i({\{
\lambda_i > 0,
H_i^2 < 4(\lambda_i/3)^3, y_i<\sqrt{\lambda_i/3}\}}) \subset \mathbb R^2
$$
corresponding to the family of vanishing cycles 
$\{H_i=\mathrm{const}, \lambda_i=\mathrm{const}, \varphi_i=\mathrm{const}, y_i<\sqrt{\lambda_i/3}\}$
such that $I^\circ_i>0$ and $I^\circ_i(f)\to0$ as $f\to F_i(\alpha_i)$.
With this notation, we have the following result (this is in fact Theorem~\ref{theorem/parabolic_classification} in the introduction).

\begin{theorem} \label{theorem/parabolic_classification_c}
The fibrations given by $F_1$ and $F_2$ are symplectically equivalent near parabolic orbits $\alpha_1$ and $\alpha_2$ if and only if there exists a diffeomorphism germ 
$g \colon (\mathbb R^2,F_1(\alpha_1)) \to (\mathbb R^2,F_2(\alpha_2))$ that respects the swallow-tail domains, $g(D_1)=D_2$, and makes the actions equal, 
$I^\circ_1 = I^\circ_2 \circ g  \mbox{ and } J_1 = J_2 \circ g$ on $D_1$.
\end{theorem}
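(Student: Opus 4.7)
The plan is to reduce the ``if'' direction to the parametric one-degree-of-freedom statement Theorem~\ref{theorem_1}. The ``only if'' direction is immediate: any right-left symplectic equivalence descends to a base diffeomorphism $g$; since $J$ is (up to an additive constant) the unique generator of the free Hamiltonian $S^1$-action in a saturated neighbourhood of $\alpha$, and $I^\circ$ is the symplectic area over the topologically distinguished vanishing cycles, both are preserved by $g$.

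For the ``if'' direction, apply the preliminary normal form (i)--(ii) to each system, yielding coordinates $(x_i, y_i, \lambda_i, \varphi_i) \in D^3 \times S^1$ with $H_i = x_i^2 - y_i^3 + \lambda_i y_i$ and $J_i = \lambda_i$. Using $J_1 = J_2 \circ g$, write $g(h, \lambda) = (\bar g(h, \lambda), \lambda)$ and replace $H_2$ by $\hat H_2 := \bar g(H_2, J_2)$, which is still a function of $F_2$ defining the same fibration; re-applying the preliminary normal form of \cite{KudryavtsevaMartynchuk2021} with $\hat H_2$ in place of $H_2$ brings $\hat H_2$ back to the standard form. After this reduction the two neighbourhoods can be identified as a single $V = D^3 \times S^1$ carrying the common data $H = x^2 - y^3 + \lambda y$, $J = \lambda$, and two symplectic forms $\omega_1, \omega_2$, with $I^\circ_1 = I^\circ_2$ on the swallow-tail $D$.

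Since $X_J = \partial_\varphi$ and $i_{\partial_\varphi}\omega_i = -d\lambda$, each $\omega_i$ is $\varphi$-independent and of the form
$$
\omega_i = A_i(x, y, \lambda)\,dx \wedge dy + \beta_i(x, y, \lambda) \wedge d\lambda + d\lambda \wedge d\varphi,
$$
with $\beta_i$ a one-form in $dx, dy$. Restricting to a level $J = \lambda = \mathrm{const}$ kills the $\beta_i$- and $d\varphi$-terms, so the symplectic reduction at $\lambda$ produces $A_i(\cdot, \cdot, \lambda)\,dx \wedge dy$ on the quotient $\mathbb R^2$, and the vanishing-cycle action $I^\circ_i$ equals the area of the vanishing cycle for this 2D form. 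Apply Theorem~\ref{theorem_1} to the family $A_i\,dx\wedge dy$ with common Hamiltonian $x^2 - y^3 + \lambda y$: equality of vanishing-cycle actions produces a smooth family $\Phi_\lambda$ of $H$-preserving $(x, y)$-diffeomorphisms with $\Phi_\lambda^*(A_2\,dx \wedge dy) = A_1\,dx \wedge dy$.

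It remains to lift $\Phi_\lambda$ to a $4$-dimensional symplectomorphism. Setting $\tilde\Phi(x, y, \lambda, \varphi) := (\Phi_\lambda(x, y), \lambda, \varphi)$, the pullback $\tilde\Phi^*\omega_2$ has $dx \wedge dy$ and $d\lambda \wedge d\varphi$ parts matching those of $\omega_1$, so the discrepancy is of the form $\gamma(x, y, \lambda) \wedge d\lambda$ for some one-form $\gamma$ in $dx, dy$. Closedness of $\omega_1 - \tilde\Phi^*\omega_2$ forces $d_{xy}\gamma = 0$ at every $\lambda$; a parametric Poincar\'e lemma yields a smooth $\psi(x, y, \lambda)$ with $d_{xy}\psi = -\gamma$, and the corrected map $\Phi(x, y, \lambda, \varphi) := (\Phi_\lambda(x, y), \lambda, \varphi + \psi(x, y, \lambda))$ satisfies $\Phi^*\omega_2 = \omega_1$, giving the required equivalence. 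The main technical difficulty throughout is contained in Theorem~\ref{theorem_1}: producing a Moser solution smooth uniformly in $\lambda$ across the degenerate value $\lambda = 0$; the remaining steps (re-normalisation of $\hat H_2$ and the parametric Poincar\'e lemma) are routine.
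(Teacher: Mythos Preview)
Your argument is essentially correct and follows the same route the paper takes: the paper's proof is a one-line appeal to Theorem~\ref{theorem_1} together with the reduction carried out in \cite[Proposition~3.3]{Bolsinov2018}, and you have unpacked exactly that reduction (pass to common $(H,J)$, compare the reduced two-forms via Theorem~\ref{theorem_1}, then lift by a $\varphi$-shift). Two points deserve tightening. First, the direction of the substitution is reversed: with $I^\circ_1(h,\lambda)=I^\circ_2(\bar g(h,\lambda),\lambda)$ you need $\hat H_2=\bar g^{-1}(H_2,J_2)$ (or, equivalently, $\hat H_1=\bar g(H_1,J_1)$) so that after identifying coordinates the two vanishing-cycle actions agree pointwise; with your choice they do not. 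Second, the step ``re-applying the preliminary normal form of \cite{KudryavtsevaMartynchuk2021}'' is a mis-citation: that result furnishes \emph{some} $(H,J)$ in standard form, not the assertion that any $\hat H_2$ obtained from $H_2$ by a base diffeomorphism fixing $\lambda$ and the cusp curve can itself be put in the form $X^2-Y^3+\lambda Y$. This flexibility --- that $H$ is determined only up to adding a function of $(H,J)$ vanishing on the bifurcation curve --- is precisely the content of \cite[Proposition~3.3]{Bolsinov2018} (whose proof, a versal-unfolding argument, goes through in~$C^\infty$); you should invoke that rather than \cite{KudryavtsevaMartynchuk2021}. The remainder of your lifting via the parametric Poincar\'e lemma is correct.
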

\begin{proof}
The proof follows from Theorem~\ref{theorem_1}; cf.\ \cite{Bolsinov2018} and, in particular,  
\cite[Proposition 3.3]{Bolsinov2018}.
\end{proof}
\begin{remark}
Observe that 
this theorem implies that the action variables $I^\circ$ and $J$ in the swallow-tail domain form a complete set of symplectic invariants of a parabolic orbit in the local case also in the smooth category; cf.\ the analytic case \cite{Bolsinov2018}.
\end{remark}

Finally, note that Theorem~\ref{theorem/parabolic_classification_c} can be strengthened in the following way
(cf.\ Remark~\ref{remark/affine}). Observe that  $I^\circ_i$ and $J_i$
define (germs of) integer affine structures on the closures of the corresponding swallow-tail domains $D_i$. 
Then the existence of a diffeomorphism $\phi$ preserving these integer affine structures is a
necessary and sufficient condition for the existence of a fiberwise symplectomorphism $\Phi$ between saturated 
neighbourhoods of parabolic orbits $\alpha_1$ and $\alpha_2$; moreover,  such a symplectomorphism $\Phi$ can always be chosen as a lift of $\phi$. This shows that the integer affine structure is a complete symplectic invariant for
parabolic orbits also in the smooth category; cf.\ the analytic case \cite{Bolsinov2018}. 

\begin{remark} \label{rem:affine}
Using this result, it can be shown that the same is true in the 
semi-local setting, that is, in a saturated neighbourhood of a cuspidal torus. More specifically, a complete symplectic invariant for cuspidal tori (in the smooth category) is given by (germs of) integer affine structures on the 
both open strata of the (local) bifurcation complex, which are defined by the action variables $(I^\circ, J)$ on the swallow-tail domain and by action variables $(I, J)$ on the other open stratum.\footnote{By replacing $I$ with $\pm I+\mathrm{const}$, we can and will assume that $\partial_I H(F(I,J))>0$ and $I(f)\to0$ as $f\to F(\alpha)$, where $\alpha$ is the parabolic orbit and $H=H(F)$ has the form $H=x^2-y^3+Jy$ near $\alpha$.}
The $(I^\circ, J)$-image of the swallow-tail domain and the $(I, J)$-image of the other open stratum are shown in Fig.~\ref{fig/affine}, left (top and bottom, resp.), and have the form $\{(I^\circ,\lambda)\mid 0<I^\circ<S_2(\lambda)-S_1(\lambda),\ \lambda>0\}$ and $\mathbb R^2\setminus\{(I,\lambda)\mid S_1(\lambda)\le I\le S_2(\lambda),\ \lambda\ge0\}$, resp., 
for some continuous functions $S_1(\lambda)$ and $S_2(\lambda)$ such that $S_i(0)=0$ and $S_1(\lambda)<S_2(\lambda)$ for $\lambda>0$.
One can show (using Theorem \ref {theorem_analytic_right_left}) that, in the real-analytic case, $S_i(\lambda)$ are $C^1$-smooth and $S_i'(\lambda)=a+a_i\lambda^{1/4}+O(\lambda^{1/2})$, $\lambda\ge0$, where $0 < a_1 < a_2$. Without loss of generality, $0\le a<1$ (this can be achieved by replacing $I$ with $I+kJ$, $k\in\mathbb Z$).
Such a pair of function germs $S_1(\lambda)$ and $S_2(\lambda)$, $\lambda\ge0$, at $0$ is a symplectic invariant of cuspidal tori, since it is uniquely determined by the integer affine structures on open strata of the bifurcation complex. Nevertheless, this symplectic invariant is incomplete (cf.\ Theorems \ref {theorem/parabolic_classification_c} and \ref {theorem_analytic_right_left} for complete symplectic invariants).
\end{remark}

\begin{figure}[htbp]
\begin{center}
\includegraphics[width=0.98\linewidth]{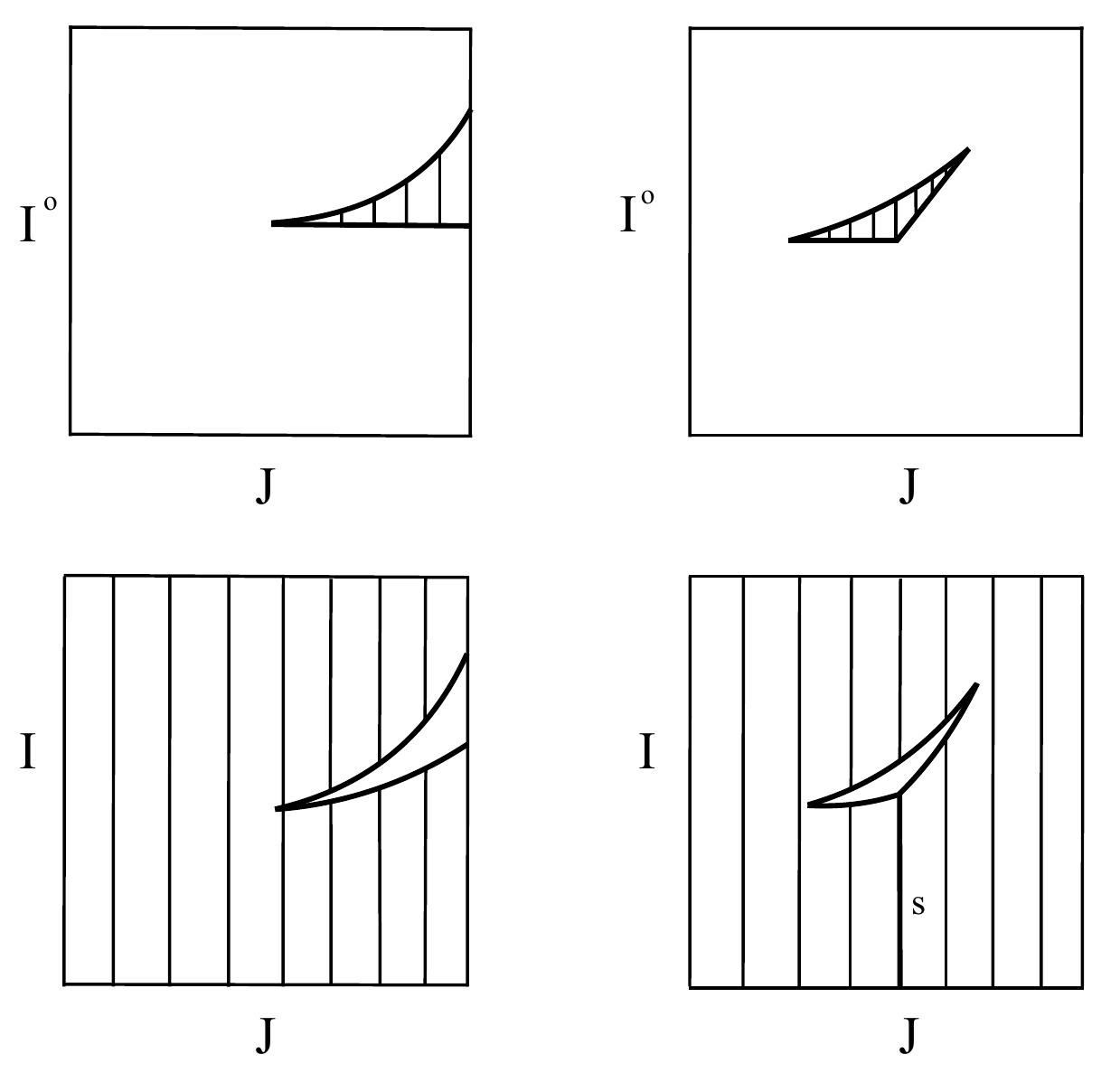}
\caption{
The integer affine structures on open strata of the local bifurcation complex at a cuspidal torus (left), at a flap (right). 
The $(I^\circ,J)$-image of the swallow-tail domain (top left), and the $(I,J)$-image of the other open stratum (bottom left) near a cuspidal torus. 
The $(I^\circ,J)$-image of the flap (top right), and the $(I,J)$-image of the flap base cut along $s$ (bottom right) near a flap.}
\label{fig/affine}
\end{center}
\end{figure}

\subsection{Normal forms, analytic case} \label{normal_forms}

In this section, we derive two normal forms for a parabolic orbit in the analytic case, up to 
the right and right-left equivalences, respectively. The first normal form is based
on the proof of Theorem~\ref{theorem_1}. The second normal form uses a different
approach, based on a canonical normal form in a neighbourhood of a parabolic point 
(see \cite{VarchenkoGivental1982} and Theorem~\ref{theorem_parabolic_point} below).

Let us start with the case of the right equivalence. Consider a real-analytic integrable system 
$F \colon M \to \mathbb R^2$
with a parabolic orbit $\alpha$. By \cite{Zung2000, Kudryavtseva2020, Bolsinov2018}, there exist  
real-analytic functions $H = H(F_1, F_2)$ and $J = J(F_1, F_2)$ in 
a neighborhood $V \simeq D^3 \times S^1$ of $\alpha$
and (real-analytic) coordinates $(x,y,\lambda, \varphi)
\in D^3 \times S^1$
such that 

\begin{itemize}
\item[i)] $H = x^2 - y^3 + \lambda y$ and $J = \lambda$;

\item[ii)] $J$ is an analytic $2\pi$-periodic integral.

\end{itemize}

\begin{theorem} \label{theorem_parabolic_normal_form}
Consider a parabolic orbit $\alpha$ of a real-analytic integrable system 
$F = (F_1,F_2) \colon M \to \mathbb R^2.$ Let the functions $H, J$ and coordinates 
$(x,y,J = \lambda, \varphi)$ be as above. 
Then, up to the real-analytic right symplectic equivalence, the functions $H$, $J$ and the symplectic structure $\omega$ have the following local normal form
near $\alpha$:
$$
 H = x^2 - y^3 + \lambda y, \quad J = \lambda, \qquad 
\omega_{norm} = c(x^2,y) \textup{d}x \wedge \textup{d}y + \textup{d} J \wedge \textup{d} \varphi,
$$
for some uniquely defined real-analytic function germ $c = c(x^2,y)$ with $c(0,0) > 0.$ Moreover,  if $\partial_{F_2}J(F(\alpha))\ne0$ and the function germ $c = c(x^2,y)$ corresponds to the (uniquely defined) function $H$ having the form $H = \frac{\pm F_1-a(J)}{b^{3/2}(J)}$ (see \cite[Sec.~2]{Bolsinov2018} for an explicit construction of such $H$), then the local bifurcation diagram of $F$ at $\alpha$ and the germs $J = J(F_1,F_2)$ and $c = c(x^2,y)$ (at $F(\alpha)$ and $(0,0)$, resp.)
form a complete  set of invariants of the real-analytic right symplectic equivalence.
\end{theorem}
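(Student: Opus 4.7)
My plan is to produce the normal form in two steps mirroring the hierarchy of invariants: first a reduction with respect to the Hamiltonian $S^1$-action generated by $J$, then a finer normalization on the reduced $2$-dimensional fibers using the $1$-dof cusp theory from Section~\ref{sec/1dof}. Starting from the real-analytic coordinates $(x,y,\lambda,\varphi)$ provided by conditions i)--ii), the first step exploits that $J=\lambda$ is a real-analytic $2\pi$-periodic first integral: after an $H$-preserving shift $\varphi\mapsto\varphi+\psi(x,y,\lambda)$ one arranges $X_J=\partial_\varphi$, and $S^1$-equivariance then forces $\omega$ to be $\varphi$-independent. Since $V\simeq D^3\times S^1$ is a trivial $S^1$-bundle, a Moser-type argument on the resulting closed $2$-form on the $(x,y,\lambda)$-base (adjusting $\varphi$ and, if necessary, $x,y$ along the level sets of $H$) passes $\omega$ to the intermediate analytic form
$$\omega_0 = c_0(x,y,\lambda)\,\textup{d}x\wedge\textup{d}y+\textup{d}\lambda\wedge\textup{d}\varphi,$$
with $c_0(0,0,0)>0$; this is the analytic symplectic reduction by the circle action.

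Next I would upgrade $\omega_0$ to $\omega_{norm}=c(x^2,y)\,\textup{d}x\wedge\textup{d}y+\textup{d}\lambda\wedge\textup{d}\varphi$ using the analytic parametric version of Proposition~\ref{prop/cusp}. By the analytic analogue of Lemma~\ref{lemma_2}, the $x$-odd part of $c_0$ is killed by an $H$-preserving symplectic move whose explicit integral formula manifestly preserves analyticity, so one may assume $c_0=c_0(x^2,y,\lambda)$. By Theorem~\ref{theorem_1} (whose proof adapts to the real-analytic category), the forms $\omega_0$ and $c(x^2,y)\,\textup{d}x\wedge\textup{d}y+\textup{d}\lambda\wedge\textup{d}\varphi$ are right symplectically equivalent via an $H,J$-preserving diffeomorphism if and only if their swallow-tail action variables $I^\circ$ coincide. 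Thus the central algebraic step is the existence and uniqueness of an analytic germ $c(x^2,y)$ with $c(0,0)>0$ realizing the prescribed $I^\circ_\omega$. Expanding $c(x^2,y)=\sum_{i,j}c_{ij}x^{2i}y^j$, the period integrals $K_{ij}(h,\lambda):=\int_{R(h,\lambda)} x^{2i}y^j\,\textup{d}x\,\textup{d}y$ admit explicit asymptotic expansions in fractional powers of $h,\lambda$ (analogous to Eq.~\eqref{eq/decomposition_h}) whose leading terms depend triangularly on $(i,j)$; matching these expansions to the analytic function $I^\circ_\omega$ on the swallow-tail domain determines the $c_{ij}$ layer by layer, and standard estimates on Abelian periods at a cusp guarantee that the resulting formal series converges to a genuine analytic germ. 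Positivity $c(0,0)>0$ follows from $c_0(0,0,0)>0$ together with the sign statement of Theorem~\ref{theorem_1}.

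For the completeness statement, under the additional hypothesis $\partial_{F_2}J(F(\alpha))\ne0$ and the canonical choice $H=(\pm F_1-a(J))/b^{3/2}(J)$ of~\cite[Sec.~2]{Bolsinov2018}, the germ of the bifurcation diagram $\Sigma$ uniquely determines the analytic functions $a(J),b(J)$, so the germ $J=J(F_1,F_2)$ determines $H=H(F_1,F_2)$. Two systems with coincident data $(\Sigma,J,c)$ therefore have literally identical normal forms $(H,J,\omega_{norm})$ and are real-analytically right symplectically equivalent; conversely, any such equivalence preserves $\Sigma$, pulls $J=J(F_1,F_2)$ back to itself, and (by the already-established uniqueness of $c$) preserves the germ $c(x^2,y)$. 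The main obstacle is the analytic inversion $I^\circ_\omega\mapsto c(x^2,y)$: the triangular matching of Taylor coefficients is a formal calculation, but controlling convergence in the analytic category requires genuine estimates on the growth of the period integrals $K_{ij}$ as $(i,j)\to\infty$, together with a verification that $I^\circ_\omega$ is itself real-analytic on the swallow-tail domain with an asymptotic expansion of the type just described, so that one truly passes from formal power series to convergent real-analytic germs.
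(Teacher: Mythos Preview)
Your approach diverges from the paper's precisely at the point you yourself flag as the main obstacle, and that divergence is a genuine gap rather than a stylistic choice.

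The paper does \emph{not} attempt to invert the action map $I^\circ_\omega \mapsto c(x^2,y)$. Instead it produces $c$ by a direct algebraic decomposition: starting from $\omega$ written (after \cite{Bolsinov2018}) as $g(x^2,y,\lambda)\,\textup{d}x\wedge\textup{d}y + \textup{d}\lambda\wedge(\textup{d}\varphi + A\,\textup{d}x + B\,\textup{d}y)$, one expands $g = \sum_k g_k(y,\lambda)x^{2k}$ and recursively solves
\[
g_0 = (3y^2-\lambda)\,b_0 + c_0(y), \qquad g_k - 2\partial_y b_{k-1} = (2k+3)(3y^2-\lambda)\,b_k + c_k(y),
\]
i.e.\ division by $3y^2 - \lambda$ with remainder $c_k(y)$ independent of $\lambda$. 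Convergence of $b = \sum b_k x^{2k}$ and $c = \sum c_k x^{2k}$ is established by an elementary inductive bound on the Taylor coefficients (Lemma~\ref{lemma_convergence}). With $u = x\,b$ one has $g\,\textup{d}x\wedge\textup{d}y = c(x^2,y)\,\textup{d}x\wedge\textup{d}y + \textup{d}u\wedge\textup{d}H$ exactly, and only \emph{then}---because $c(x^2,y)\,\textup{d}x\wedge\textup{d}y$ is now $\lambda$-independent and hence closed---does closedness of $\omega$ force the residual $A,B$ terms to be absorbable into $\varphi$. Your route instead requires you to (i) prove $I^\circ_\omega$ is real-analytic on the swallow-tail with a controlled expansion, (ii) establish a triangular leading-term structure for the period integrals $K_{ij}$, and (iii) bound the growth of the resulting $c_{ij}$. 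You do none of these, and the appeal to ``standard estimates on Abelian periods at a cusp'' is exactly where the work lies; the paper's decomposition sidesteps all three issues.

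There is also an ordering problem in your first step. Reaching $c_0(x,y,\lambda)\,\textup{d}x\wedge\textup{d}y + \textup{d}\lambda\wedge\textup{d}\varphi$ \emph{before} making the $\textup{d}x\wedge\textup{d}y$-part $\lambda$-independent is not automatic: closedness of $\omega$ gives only $\partial_\lambda g + \partial_y A - \partial_x B = 0$, which does not make $A\,\textup{d}x + B\,\textup{d}y$ exact in $(x,y)$ for fixed $\lambda$. In the paper the order is reversed---first obtain the $\lambda$-free $c(x^2,y)$, then use closedness of what remains to write $\textup{d}\lambda\wedge(A_2\,\textup{d}x+B_2\,\textup{d}y) = \textup{d}\lambda\wedge\textup{d}w$ and shift $\varphi \mapsto \varphi + w$.
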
 

This theorem follows from the following proposition ({\'a} la \cite[Lemme principal]{DeVerdiere1979}),
the proof of which is given below.

\begin{proposition} \label{proposition_parabolic_normal_form}
Let $\alpha$ be a parabolic orbit of a real-analytic integrable system. Then there exist coordinates
$(x,y,J = \lambda, \varphi) \in D^3 \times S^1$ such that $(x,y, J = \lambda)$ 
satisfy {\rm i)--ii)} from above and the (real-analytic) symplectic form $\omega$ is given by
$$
\omega = c(x^2,y) \textup{d}x \wedge \textup{d}y + \textup{d}u \wedge \textup{d}H + \textup{d} J \wedge \textup{d} \varphi
$$
for some real-analytic function germs $c = c(x^2,y), \ c(0,0) >
0,$ and $u = u(x,y,J)$. The function
$c$ is uniquely defined.
\end{proposition}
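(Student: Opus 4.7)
The plan is to reduce the statement to a linear PDE system on the $3$-dimensional symplectic quotient by the $S^1$-action generated by $J$, and then solve it by real-analytic techniques. Starting from the preliminary real-analytic coordinates $(x,y,\lambda,\varphi)\in D^3\times S^1$ of \cite{KudryavtsevaMartynchuk2021} (or \cite{Zung2000, Bolsinov2018}) satisfying i)-ii), the freeness and $2\pi$-periodicity of the Hamiltonian $S^1$-action force $\partial_\varphi=\pm X_J$, hence $i_{\partial_\varphi}\omega=\mp\textup{d}\lambda$ and $\mathcal L_{\partial_\varphi}\omega=0$. After adjusting the orientation of $\varphi$, this yields the decomposition
\[
\omega \;=\; \omega' \;+\; \textup{d}\lambda\wedge\textup{d}\varphi,
\]
where $\omega'=P\,\textup{d}x\wedge\textup{d}y+Q\,\textup{d}x\wedge\textup{d}\lambda+S\,\textup{d}y\wedge\textup{d}\lambda$ is a closed, $\varphi$-independent $2$-form on the $3$-dimensional space $(x,y,\lambda)$.

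Writing the target identity $\omega'=c(x^2,y)\,\textup{d}x\wedge\textup{d}y+\textup{d}u\wedge\textup{d}H$ and expanding via $\textup{d}H=2x\,\textup{d}x+(\lambda-3y^2)\,\textup{d}y+y\,\textup{d}\lambda$ splits it into
\begin{align*}
P &= c + u_x(\lambda-3y^2) - 2xu_y,\\
Q &= yu_x - 2xu_\lambda,\\
S &= yu_y - (\lambda-3y^2)u_\lambda.
\end{align*}
A direct differentiation combining closedness $\partial_\lambda P=\partial_y Q-\partial_x S$ with the last two equations shows that $\partial_\lambda c=0$ holds automatically once the last two are satisfied. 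Thus it suffices to solve the pair of PDEs for $u$; the coefficient $c$, read off the first equation, is then $\lambda$-independent by construction.

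The main technical step is to produce a real-analytic $u(x,y,\lambda)$ solving these two PDEs. Geometrically, they prescribe $\textup{d}u|_{H=h}$ on every level set of $H$: using $\lambda_x=-2x/y$ and $\lambda_y=(3y^2-\lambda)/y$ on $\{H=h\}$ parametrized by $(x,y)$, they reduce to $\partial_xu|_{H=h}=Q/y$ and $\partial_yu|_{H=h}=S/y$. Equivalently, the analytic Poincar\'e lemma in $3$D yields $\omega'=\textup{d}\beta$ for some real-analytic $1$-form $\beta$, and I seek an analytic decomposition $\beta=u\,\textup{d}H+F(x,y)\,\textup{d}x+\textup{d}\psi$ with $F$ independent of $\lambda$; the compatibility conditions for the existence of such $\psi$ and for $\partial_\lambda F=0$ are exactly the pair of PDEs above. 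They can be solved by an iterative procedure alternating the analytic Poincar\'e lemma with the method of characteristics along the Hamiltonian flow of $H$, with the degeneracy at $y=0$ controlled by clearing the $1/y$ factors using closedness of $\omega'$.

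The remaining normal-form properties of $c$ are verified in sequence: evenness of $c$ in $x$ is arranged by a preliminary symplectic $u_1$-move with $u_1$ odd in $x$, as in Example \ref{example/moves}.2 applied parametrically in $\lambda$, which kills the odd-in-$x$ part of the $\textup{d}x\wedge\textup{d}y$-coefficient without affecting i)-ii); positivity $c(0,0)>0$ follows from non-degeneracy of $\omega$ at $\alpha$ (since $\textup{d}H|_\alpha=0$, one has $\omega|_\alpha=c(0,0)\,\textup{d}x\wedge\textup{d}y+\textup{d}\lambda\wedge\textup{d}\varphi$), after possibly reversing $x$, which preserves $H$; and uniqueness is immediate from the intrinsic formula $c=[Py-Q(\lambda-3y^2)+2xS]/y$, obtained by restricting $\omega'$ to level sets of $H$ and reading off the $\textup{d}x\wedge\textup{d}y$-coefficient, which extends from $\{y\ne 0\}$ to all of $D^3$ by analyticity. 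The main obstacle is the second step: real-analytic regularity of $u$ relies essentially on the analytic hypothesis, which is precisely why the $C^\infty$ counterpart is flagged as open in the remark after Theorem~\ref{theorem_analytic_right_left_intro}.
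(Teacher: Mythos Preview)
Your setup is correct and close to the paper's: the reduction to a $\varphi$-independent closed $2$-form $\omega'=P\,\textup{d}x\wedge\textup{d}y+Q\,\textup{d}x\wedge\textup{d}\lambda+S\,\textup{d}y\wedge\textup{d}\lambda$, the evenness normalization via an odd-in-$x$ move, and the computation showing $\partial_\lambda c=0$ follows automatically from closedness plus the $Q,S$-equations are all fine. Your uniqueness argument is actually nicer than the paper's: the identity $cy=Py-Q(\lambda-3y^2)+2xS$ holds for \emph{any} decomposition $\omega'=c\,\textup{d}x\wedge\textup{d}y+\textup{d}u\wedge\textup{d}H$, so $c$ is determined by $\omega'$ on $\{y\neq 0\}$ and hence everywhere by analyticity. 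The paper instead appeals to Theorem~\ref{theorem_1} together with the recursive equations \eqref{eq/b_and_c}--\eqref{eq/b_and_c_2}.

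The genuine gap is existence. Your sentence ``They can be solved by an iterative procedure alternating the analytic Poincar\'e lemma with the method of characteristics along the Hamiltonian flow of $H$, with the degeneracy at $y=0$ controlled by clearing the $1/y$ factors using closedness of $\omega'$'' is not a proof; it is a description of what one hopes will work. The vector fields $V_1=y\partial_x-2x\partial_\lambda$ and $V_2=y\partial_y-(\lambda-3y^2)\partial_\lambda$ appearing in your $Q,S$-equations vanish simultaneously along the entire line $x=y=0$ (for all $\lambda$), so the method of characteristics degenerates there and ``clearing $1/y$ factors'' does not by itself produce a convergent analytic solution. The paper handles exactly this point with Lemma~\ref{lemma_convergence}: expanding $g=\sum g_k(y,\lambda)x^{2k}$ and defining $b_k,c_k$ recursively by \eqref{eq/b_and_c}--\eqref{eq/b_and_c_2}, one proves by an explicit combinatorial estimate on the Taylor coefficients (induction on $k$ gives $|b_{i,j,2k}|\le \frac{(i+k)^k}{k!}r^{i+j+2k}c^k$) that the resulting series $b=\sum b_k x^{2k}$ and $c=\sum c_k x^{2k}$ converge. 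The analytic function $u=v_1+xb(x^2,y,\lambda)$ then does the job, and the remaining $\textup{d}\lambda\wedge(\cdots)$ terms are absorbed into $\textup{d}J\wedge\textup{d}\varphi$ by a shift of $\varphi$. Without a convergence argument of this kind, your existence step is incomplete.
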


We first prove the following lemma.

\begin{lemma} \label{lemma_convergence}
Let
$$
g = \sum^{\infty}_{k = 0} g_{k}(y,\lambda) x^{2k}
$$ be a germ of a real-analytic function. Define germs of real-analytic functions $b_{k} = b_k(y,\lambda)$ and 
$c_k = c_k(y), \ k \ge 0,$  inductively as follows: 
\begin{align} \label{eq/b_and_c}
g_{0} &= (3y^2-\lambda) b_{0}(y,\lambda) + c_{0}(y) \quad \mbox{ and }  \\ 
g_{k} -  2\partial_y b_{k-1} &= (2k+3)(3y^2-\lambda) b_{k}(y,\lambda) + c_{k}(y). \label{eq/b_and_c_2}
\end{align}
Then the series
$$
b = \sum^{\infty}_{k = 0} b_{k}(y,\lambda) x^{2k} \ \mbox{ and } \
c = \sum^{\infty}_{k = 0} c_{k}(y) x^{2k}
$$
converge to real-analytic function germs.
\end{lemma}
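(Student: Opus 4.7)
The plan is to recognize the recurrence (\ref{eq/b_and_c})--(\ref{eq/b_and_c_2}) as the coefficient-by-coefficient form of a single first-order linear analytic PDE, and then to establish convergence of its formal solution by nested-polydisc Cauchy estimates that exploit the $(2k+3)$-denominators. Summing the identities against $x^{2k}$, the recurrence is equivalent to the PDE
\[
(3y^2-\lambda)(3b + x\partial_x b) + 2x^2 \partial_y b + c \; = \; g + 2(3y^2-\lambda)\,b(0,y,\lambda)
\]
for $b(x^2,y,\lambda) = \sum_k b_k(y,\lambda)\, x^{2k}$ and $c(x^2,y) = \sum_k c_k(y)\, x^{2k}$ (the correction on the right accounts for the coefficient $1$, rather than $(2\cdot 0+3)=3$, at $k=0$). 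Using the explicit Weierstrass division of $f(y,\lambda)$ by $(3y^2-\lambda)$, namely $r(y) = f(y, 3y^2)$ and $q(y,\lambda) = -\int_0^1 \partial_\lambda f(y, 3y^2 + t(\lambda-3y^2))\, dt$, one obtains the closed-form expressions
\[
c_k(y) = (g_k - 2\partial_y b_{k-1})(y, 3y^2),\qquad b_k(y,\lambda) = -\frac{1}{2k+3}\int_0^1 \partial_\lambda(g_k - 2\partial_y b_{k-1})(y, 3y^2 + t(\lambda-3y^2))\, dt
\]
(with $b_{-1}\equiv 0$ and the $(2\cdot 0 + 3)$-factor replaced by $1$ at $k=0$).

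Next I would fix a polydisc $P_{\rho_0}=\{|y|,|\lambda|\le\rho_0\}$ with $\rho_0 \le 1/6$ (so that $3y^2 \le \rho_0$ for $|y|\le\rho_0$, ensuring the locus $\{\lambda=3y^2\}$ stays inside $P_{\rho_0}$), on which $g$ is analytic with Cauchy bounds $\|g_k\|_{\rho_0}\le M R^{-2k}$. On a nested sequence $\rho_\infty < \cdots < \rho_k < \cdots < \rho_0$ I would prove by induction a bound of the form $\|b_k\|_{\rho_k}\le C\,\sigma^{2k}$ for some constants $C, \sigma > 0$, using the Cauchy inequality to control $\|\partial_\lambda \partial_y b_{k-1}\|_{\rho_k} \le \|b_{k-1}\|_{\rho_{k-1}}/(\rho_{k-1}-\rho_k)^2$ in the above formula for $b_k$; the factor $1/(2k+3)$ from the Weierstrass inversion is essential for closing the induction. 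A geometric bound $\|b_k\|_{\rho_\infty}\le C\sigma^{2k}$ yields the convergence of $\sum b_k x^{2k}$ on the polydisc $\{|x|^2 < \sigma^{-2},\ |y|,|\lambda|<\rho_\infty\}$, and the convergence of $\sum c_k x^{2k}$ then follows at once from the closed-form formula for $c_k$ together with Cauchy's estimate of $\partial_y b_{k-1}$.

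The main obstacle lies in the quantitative bookkeeping of this induction: each step introduces one $\partial_\lambda$ and one $\partial_y$ (producing Cauchy factors $\sim (\rho_{k-1}-\rho_k)^{-2}$), which must be balanced against the $(2k+3)$-denominator and the geometric decay $R^{-2k}$ of the Taylor coefficients of $g$. A naive ansatz with a fixed polydisc fails; the resolution is a Cauchy--Kovalevskaya-style choice of the nested sequence $\rho_k$ (with $\rho_{k-1}-\rho_k$ decaying at a polynomial rate while $\rho_k \to \rho_\infty > 0$) and an inductive hypothesis with $\sigma$ chosen sufficiently large relative to $1/R$ so that the Cauchy-loss factors on the transition $\rho_{k-1}\to\rho_k$ are absorbed by the $(2k+3)$-gain uniformly in $k$.
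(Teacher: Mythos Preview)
Your reformulation as a single PDE and the explicit Weierstrass-division formula for $b_k,c_k$ are correct and illuminating. The gap is quantitative and lies exactly where you flagged the ``main obstacle'': the nested-polydisc Cauchy scheme cannot be closed with the stated ingredients. Each inductive step applies $\partial_\lambda\partial_y$ to $b_{k-1}$, so the Cauchy loss is $\gtrsim \delta_k^{-2}$ with $\delta_k=\rho_{k-1}-\rho_k$, while the gain is only $(2k+3)^{-1}$. For the ansatz $\|b_k\|_{\rho_k}\le C\sigma^{2k}$ to propagate one needs $\delta_k^2\gtrsim 1/\bigl((2k+3)\sigma^2\bigr)$, i.e.\ $\delta_k\gtrsim 1/(\sigma\sqrt{k})$; but then $\sum_k\delta_k=\infty$, so no positive limiting radius $\rho_\infty$ exists. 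Equivalently, after $k$ steps the accumulated Cauchy loss is at least $(k/\rho_0)^{2k}$ (optimizing over all admissible $\delta_j$ by AM--GM), while the accumulated gain $\prod_{j\le k}(2j+3)^{-1}$ is only $\sim 1/(2^k k!)$; the ratio behaves like $(ke/2)^k$ and diverges. Neither increasing $\sigma$ nor choosing a polynomially decaying $\delta_k$ resolves this: the constraint $\delta_k\gtrsim k^{-1/2}$ is incompatible with any summable schedule.

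The paper avoids this by working directly with the triple Taylor expansion $b=\sum b_{i,j,2k}y^i\lambda^j x^{2k}$ rather than with sup norms. The defining relations unwind to the scalar recursion
\[
b_{i,j,2k}=3\,b_{i-2,j+1,2k}-\frac{2(i+1)}{2k+1}\,b_{i+1,j+1,2(k-1)}-\frac{c_{i,j+1,2k}}{2k+1},
\]
and one proves by induction on $k$ the bound $|b_{i,j,2k}|\le \dfrac{(i+k)^k}{k!}\,r^{\,i+j+2k}c^k$. The point is that the $y$-derivative shows up as the factor $(i+1)$, tied to the $y$-degree, rather than as a uniform $1/\delta$; the cumulative effect $(i+k)^k/k!\le e^{\,i+k}$ is then merely exponential in $i+k$, which is exactly what convergence needs. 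Your sup-norm Cauchy estimate replaces this degree-sensitive factor $(i+1)$ by its worst case over all $i$, and that overcounting is what breaks the bookkeeping. If you want to keep your integral representation, the fix is to switch from sup norms on shrinking polydiscs to a majorant-series argument that tracks the $y$-degree (equivalently, to carry out the estimate coefficientwise as the paper does).
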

\begin{remark}
The existence and uniqueness of  analytic function $b_{k} = b_k(y,\lambda)$ and 
$c_k = c_k(y)$ satisfying Eq.~(\ref{eq/b_and_c}-\ref{eq/b_and_c_2}) follows from Adamar's lemma.
As we shall see, when $g$ is coming from a symplectic form, the 
function $c$ is the  symplectic invariant appearing in Theorem~\ref{theorem_parabolic_normal_form} above 
(cf.\ the proof of 
Theorem~\ref{theorem_1}, where the same equations as Eq.~(\ref{eq/b_and_c}-\ref{eq/b_and_c_2}) appear with all $c_k$ equal to zero).

We also note that the function $b$ is real-analytic on a possibly smaller neighbourhood of the origin than the domain
of analyticity of $g$.
\end{remark}

\renewcommand*{\proofname}{Proof of Lemma~\ref{lemma_convergence}}

\begin{proof}
Let
$$
g = \sum^{\infty}_{i,j,k} c_{i,j,2k} y^i \lambda^j x^{2k}.
$$
This series converges since $g$ is real-analytic. Consider also 
the
(at the moment formal) series 
$$b = \sum^{\infty}_{i,j,k} b_{i,j,2k} y^i \lambda^j x^{2k}.$$
Then for $u = x b(x^2,y,\lambda)$, we have that $g - (u_x H_y - u_y H_x)$ is a formal series in $x^2$ and $y$ only
(cf.\ the proof of Theorem~\ref{theorem_1}).

Consider the function
$v = x^{2k+1}y^i \lambda^{j-1}$. Then
$$
v_x H_y - v_y H_x = 
(3 (2k+1)3 x^{2k} y^{i+2} - 2i x^{2k+2} y^{i-1})\lambda^{j-1} - (2k+1)x^{2k} y^i \lambda^j.
$$
Since $g - (u_x H_y - u_y H_x)$ does not depend on $\lambda$, it follows that for all $i,j,k$ with $j \ge 1$, we have
\begin{equation*}
c_{i,j,2k} = 3(2k+1) b_{i-2,j,2k} - (2k+1)b_{i,j-1,2k} - 2(i+1)b_{i+1,j,2(k-1)}.
\end{equation*}
These equations determine $b_{ij2k}$ unambiguously. Let us rewrite them
in the following form:
\begin{align} \label{eq/def_equations_b}
b_{i,j,2k} &= 3 b_{i-2,j+1,2k} - \frac{2(i+1)}{2k+1} b_{i+1,j+1,2(k-1)} - \frac{c_{i,j+1,2k}}{2k+1}. 
\end{align}

Now recall that $g$ is real-analytic. Hence
there exists radius $r > 3$ such that $|c_{i,j,2k}| \le r^{i+j+2k}$. Using Eq.~\eqref{eq/def_equations_b},
induction by $k$ shows that for some constant $c,$ we have
$$
|b_{i,j,2k}| \le \frac{(i+k)^k}{k!} r^{i+j+2k} c^k.
$$
It now follows that $|b_{i,j,2k}| \le R^{i+j+2k}$
for some $R > \max\{r,c\}$ (one can take $R = (\max\{r,c\})^3$). 

We conclude that $b$ is represented by a convergent power series and
is therefore (real-)analytic. It follows that
$\sum^{\infty}_{k = 0} b_{k}(y,\lambda) x^{2k}
$
also converges and to the same function $b$. From Eq.~(\ref{eq/b_and_c}-\ref{eq/b_and_c_2}), it now follows that
$
\sum^{\infty}_{k = 0} c_{k}(y) x^{2k}
$
converges as well.
\end{proof}

\renewcommand*{\proofname}{Proof of Proposition~\ref{proposition_parabolic_normal_form}}

\begin{proof}
First observe that, after taking a smaller neighbourhood $V$ of the parabolic orbit and changing $x,y,\varphi$ if necessary, the symplectic form can be written as
\begin{equation} \label{eq/symplectic_structure}
\omega = \tilde{g}(x,y,\lambda)\textup{d}x\wedge \textup{d}y + \textup{d}\lambda\wedge(\textup{d}\varphi+\tilde A(x,y,\lambda)\textup{d}x + \tilde B(x,y,\lambda)\textup{d}y),
\end{equation}
for some real-analytic functions $\tilde{g}, \tilde A$ and $\tilde B$; see \cite{Bolsinov2018}.
By (a real-analytic version of) Lemma~\ref{lemma_2}, 
we can write 
\begin{equation*} 
\omega = g(x^2,y,\lambda)\textup{d}x\wedge \textup{d}y + \textup{d}v_1 \wedge \textup{d}H + \textup{d}\lambda\wedge(\textup{d}\varphi+A_1(x,y,\lambda)\textup{d}x + B_1(x,y,\lambda)\textup{d}y),
\end{equation*}
where $g = g(x^2,y,\lambda)$ is the even part of $\tilde{g}(x,y,\lambda)$ with respect to $x$, and $v_1 = v_1(x,y,\lambda)$, $A_1= A_1(x,y,\lambda)$ and $B_1 = B_1(x,y,\lambda)$ are some real-analytic functions. 
Consider
the Taylor expansion
$$
g = \sum^{\infty}_{k = 0} g_{k}(y,\lambda) x^{2k}. 
$$
By the previous lemma, there exist analytic functions 
$$
b = \sum^{\infty}_{k = 0} b_{k}(y,\lambda) x^{2k} \ \mbox{ and } \
c = \sum^{\infty}_{k = 0} c_{k}(y) x^{2k}
$$
such that 
$$g_{0} = (3y^2-\lambda) b_{0}(y,\lambda) + c_{0}(y).$$
and 
$$g_{n} -  2\partial_y b_{n-1}= (2n+3)(3y^2-\lambda) b_{n}(y,\lambda) + c_{n}(y).$$
Let $v_2 = xb(x^2,y,\lambda)$. Then, by the construction (cf.\ the proof of Theorem~\ref{theorem_1}), the symplectic $v_2$-move transforms $g(x^2,y,\lambda) \textup{d}x \wedge \textup{d}y$ to 
$c(x^2, y) \textup{d}x \wedge \textup{d}y$. Hence we can write
$$
g(x^2,y,\lambda) \textup{d}x \wedge \textup{d}y = c(x^2,y) \textup{d}x \wedge \textup{d}y + (\partial_x(v_2) \partial_y(H) - \partial_y(v_2) \partial_x(H)) \textup{d}x \wedge \textup{d}y
$$
and 
$$
\omega = c(x^2,y) \textup{d}x \wedge \textup{d}y + \textup{d}u \wedge \textup{d}H + \textup{d}\lambda\wedge(\textup{d}\varphi+ A_2(x,y,\lambda)\textup{d}x + B_2(x,y,\lambda)\textup{d}y),
$$
where $u = v_1 + v_2$.
Since $\omega - c(x^2,y) \textup{d}x \wedge \textup{d}y - \textup{d}u \wedge \textup{d}H$ is closed, we get the formula
$$
\omega = c(x^2,y) \textup{d}x \wedge \textup{d}y + \textup{d}u \wedge \textup{d}H + \textup{d} J \wedge \textup{d}(\varphi+w)
$$
for some real-analytic germ $w = w(x,y,\lambda)$ at the origin. To put this into the required form, it is left 
to change the angle coordinate $\varphi \mapsto \varphi + w$. 
Next we observe that because of Theorem~\ref{theorem_1} and Eq.~(\ref{eq/b_and_c}--\ref{eq/b_and_c_2}), the representative $c = c(x^2,y)$ is unique, provided that the coordinates $(x,y,\varphi)$ satisfying \eqref {eq/symplectic_structure} are fixed.
Finally, observe that replacing $x$ with $-x$ gives rise to replacing $\tilde g(x,y,\lambda)$ with $-\tilde g(-x,y,\lambda)$ in \eqref{eq/symplectic_structure},
and hence $c(x^2,y)$ with $-c(x^2,y)$.
Thus, by replacing $x$ with $-x$ if necessarily, we can achieve that $c(0,0)>0$.
\end{proof}

\renewcommand*{\proofname}{Proof of Theorem~\ref{theorem_parabolic_normal_form}}

\begin{proof}
First we observe that the pair of functions $H=H(F_1,F_2), J=J(F_1,F_2)$ satisfying i)--ii) is non-unique and is defined up to replacing it with $H+K, J$, where $K = K(F_1, F_2)$ is any real-analytic germ at $(0,0)$ vanishing on the curve $\Sigma=\{(F_1,F_2)\colon H^2(F_1,F_2) = 4(J(F_1,F_2)/3)^3\}\subset\mathbb R^2$, which is the local bifurcation diagram of $F$ at $\alpha$. One can show that the germ $c = c(x^2,y)$ depends on the choice of such $H$, so it is not uniquely determined by $F=(F_1,F_2)$.
However, if $\partial_{F_2}J(F(\alpha))\ne0$, then after the change $(F_1, F_2)\to(F_1, J)$, we can choose $H$ uniquely by requiring that it has the form 
$H=\frac{\pm F_1-a(J)}{b^{3/2}(J)}$ for some real-analytic germs $a(J)$ and $b(J)$ at $0$, $b(0)>0$ (see \cite[Sec.~2]{Bolsinov2018} for an explicit construction of such $H$).

Next we conclude from Proposition~\ref {proposition_parabolic_normal_form} and Lemma \ref {lemma/move} that the bifurcation diagram $\Sigma$, the action variable $J =J(F_1,F_2)$ and the function $c = c(x^2,y)$ with $c(0,0)>0$ are complete real-analytic right symplectic invariants.
Moreover, any curve $\Sigma$, which is a diffeomorphic image of the semicubical parabola with a vertex at $F(\alpha)$, and any analytic functions $J=J(F_1,F_2)$ and $c = c(x^2,y)
$ appear as real-analytic right symplectic invariants for some integrable system with a parabolic orbit, provided that $\partial_{F_2}J(F(\alpha))\ne0$, $\xi J\ne0$ and $c(0,0) > 0$, where $\xi$ is a non-zero vector at $F(\alpha)$ tangent to $\Sigma$.
\end{proof}

\renewcommand*{\proofname}{Proof}

To obtain a
right-left normal form, we will use a different approach. First observe that by
\cite{VarchenkoGivental1982}, all parabolic points are symplectically equivalent. More specifically,
there is the following result.
\begin{theorem}[\cite{VarchenkoGivental1982}] \label{theorem_parabolic_point}
Let $P$ be a parabolic point of a real-analytic integrable system $F  = (F_1, F_2)\colon U \to \mathbb R^2$. Then there exist
coordinates $(\tilde x,\tilde y, \tilde \lambda, \tilde \mu)$ centered at $P$ such that
$$\tilde H = \tilde{x}^2 - \tilde{y}^3 + \tilde{\lambda} \tilde{y} \mbox{ and } \tilde{J} = \tilde \lambda$$
are  real-analytic functions of $(F_1,F_2)$ and the symplectic structure has the canonical form
$$
\textup{d} \tilde{x} \wedge \textup{d} \tilde{y} + \textup{d} \tilde{\lambda} \wedge \textup{d} \tilde \mu.
$$
 The functions $\tilde H$ and $\tilde{J}$ are uniquely defined.
\end{theorem}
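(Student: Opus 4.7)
The plan is to exploit the fact that at a parabolic point one degree of freedom is regular (the momentum $J$ is submersive there), allowing a reduction to a parametric one-degree-of-freedom $A_2$ singularity, which is classical.

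\textbf{Step 1 (Darboux--Carathéodory adapted to $J$).} After an affine change of $(F_1,F_2)$ if needed, the definition of a parabolic point supplies a real-analytic function $J = J(F_1,F_2)$ with $dJ(P)\ne 0$, namely the one whose Hamiltonian flow is tangent to the parabolic orbit. Apply the real-analytic Darboux--Carathéodory theorem to obtain coordinates $(x_0,y_0,\lambda,\mu)$ centered at $P$ in which $J=\lambda$ and $\omega = dx_0\wedge dy_0 + d\lambda\wedge d\mu$. Since $\{H,J\}=0$, the remaining integral $H$ becomes independent of $\mu$, i.e.\ $H=H(x_0,y_0,\lambda)$, and by definition of a parabolic point $H(\cdot,\cdot,0)$ has an $A_2$ singularity at the origin of the symplectic leaf $\{\lambda=0\}$.

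\textbf{Step 2 ($A_2$ versal unfolding).} View $H(x_0,y_0,\lambda)$ as a one-parameter real-analytic deformation of an $A_2$ singularity. By Arnol'd's versality theorem in the real-analytic category, there exists a real-analytic fibered change $(x_0,y_0)\mapsto(\tilde x(x_0,y_0,\lambda),\tilde y(x_0,y_0,\lambda))$ and a real-analytic reparametrisation of the deformation parameter bringing $H$ into the form $\tilde x^2-\tilde y^3+a(\lambda)\tilde y+b(\lambda)$. The parabolic-point hypothesis is precisely the transversality $a'(0)\ne 0$, so after putting $\tilde\lambda:=a(\lambda)$ and subtracting $b$ (i.e.\ replacing $H$ by $\tilde H:=H-b(J)$, which is still a function of $(F_1,F_2)$) one obtains $\tilde H=\tilde x^2-\tilde y^3+\tilde\lambda\tilde y$ and $\tilde J=\tilde\lambda$ as real-analytic functions of $(F_1,F_2)$.

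\textbf{Step 3 (restoring the canonical symplectic form).} The fibered diffeomorphism in Step 2 is not symplectic, so in the new variables
\[
\omega = g(\tilde x,\tilde y,\tilde\lambda)\,d\tilde x\wedge d\tilde y + d\tilde\lambda\wedge(d\tilde\mu + A\,d\tilde x+B\,d\tilde y),
\]
for real-analytic $g>0,A,B$. Absorb the reparametrisation $\tilde\lambda=a(\lambda)$ by rescaling $\tilde\mu\mapsto\tilde\mu/a'(\lambda)$, and eliminate the cross terms $A\,d\tilde x\wedge d\tilde\lambda+B\,d\tilde y\wedge d\tilde\lambda$ by a shift $\tilde\mu\mapsto\tilde\mu+\Psi(\tilde x,\tilde y,\tilde\lambda)$, where $\Psi$ exists by the Poincaré lemma applied to the closed form $\omega-g\,d\tilde x\wedge d\tilde y-d\tilde\lambda\wedge d\tilde\mu$. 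One is left with $\omega = g\,d\tilde x\wedge d\tilde y + d\tilde\lambda\wedge d\tilde\mu$. Apply the parametric real-analytic $A_2$ right-left Moser argument (the analytic specialisation of Theorem~\ref{theorem_1} combined with the freedom, in the right-left setting, to further correct $\tilde H$ by a function of $\tilde J$) to conjugate $g$ to $1$ by an $(\tilde H,\tilde J)$-preserving symplectomorphism in the $(\tilde x,\tilde y)$-block.

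\textbf{Step 4 (Uniqueness).} If $(\tilde H,\tilde J)$ and $(\tilde H',\tilde J')$ both satisfy the conclusion, the corresponding local bifurcation diagrams $\{\tilde H^2=4(\tilde J/3)^3\}$ in $\mathbb R^2$ coincide as germs because both equal the critical-value set of $F$; together with the prescribed polynomial form, this forces $\tilde J'=\tilde J$ and $\tilde H'=\pm\tilde H$, and the sign is pinned down by the coorientation of the swallow-tail domain. Thus $(\tilde H,\tilde J)$ is intrinsically determined by $F$.

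The main obstacle is Step~3: forcing $g\equiv 1$ requires both a parametric cusp Moser argument and the right-left freedom to modify $\tilde H$ by a function of $\tilde J$. Without this freedom (as in Theorem~\ref{theorem_parabolic_normal_form}) an invariant $c(x^2,y)$ survives; exploiting the right-left freedom is precisely what kills this invariant and yields the canonical form.
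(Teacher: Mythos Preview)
The paper's own proof is essentially a citation: it invokes \cite[Theorem~3]{VarchenkoGivental1982} (and \cite{Garay2004}), remarking that the key hypothesis there---infinitesimal stability of the $A_2$ Lagrangian singularity---is satisfied. Your proposal, by contrast, attempts a self-contained argument. Steps~1--2 (Darboux--Carath\'eodory plus the real-analytic versal unfolding of $A_2$) are fine and standard. The difficulties lie in Steps~3 and~4.

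\textbf{Step 3 is the real gap.} You reduce to $\omega = g(\tilde x,\tilde y,\tilde\lambda)\,d\tilde x\wedge d\tilde y + d\tilde\lambda\wedge d\tilde\mu$ with $\tilde H = \tilde x^2-\tilde y^3+\tilde\lambda\tilde y$, and then assert that ``the analytic specialisation of Theorem~\ref{theorem_1} combined with the freedom \dots\ to further correct $\tilde H$ by a function of $\tilde J$'' conjugates $g$ to $1$. But Theorem~\ref{theorem_1} needs the action variable $I^\circ_g(h,\lambda)$ on the swallow-tail domain to match that of $g\equiv 1$, and in general it does not. The only right-left freedom you invoke---a shift $\tilde H\mapsto\tilde H - b(\tilde J)$---is a one-variable function germ, whereas the obstruction, by Theorem~\ref{theorem_parabolic_normal_form}, is the two-variable germ $c(x^2,y)$. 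A one-parameter freedom cannot kill a two-parameter invariant; something more is needed. What actually makes the Varchenko--Givental argument work is the \emph{infinitesimal (Lagrangian) stability} of $A_2$: every infinitesimal deformation of the generating family is absorbed by an infinitesimal symplectomorphism of the source together with an infinitesimal diffeomorphism of the target, and one then integrates along a path. The paper explicitly flags this (``Note that a parabolic singularity is infinitesimally stable''); your sketch does not use it, and without it Step~3 does not go through.

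\textbf{Step 4 is also incomplete.} The semicubical parabola $\{h^2 = 4(\lambda/3)^3\}$ has the nontrivial automorphism $(h,\lambda)\mapsto(c^3 h,c^2\lambda)$ for any $c>0$, realised on the source by $(\tilde x,\tilde y,\tilde\lambda)\mapsto(c^{3/2}\tilde x,\,c\tilde y,\,c^2\tilde\lambda)$; this keeps both the bifurcation diagram and the ``prescribed polynomial form'' but rescales $d\tilde x\wedge d\tilde y$ by $c^{5/2}$. So the bifurcation diagram together with the polynomial shape does \emph{not} force $\tilde J'=\tilde J$; it is precisely the symplectic normalisation (equivalently, the action $I^\circ$) that fixes the scale $c=1$. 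Your uniqueness argument needs to bring in that symplectic input explicitly.
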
 
\begin{proof}
The result follows from \cite[Theorem 3]{VarchenkoGivental1982}; see also \cite{Garay2004}.
Note that a parabolic singularity is infinitesimally stable (see \cite[Theorem 5.25]{Zoladek2006} for a proof).
\end{proof}

\begin{theorem} \label{theorem_analytic_right_left}
Let $\alpha$ be a parabolic orbit of a real-analytic integrable system $F \colon U \to \mathbb R^2$. 
Consider canonical coordinates $(\tilde x,\tilde y, \tilde J = \tilde \lambda, \tilde \mu)$ in a neighbourhood of 
some point $P \in \alpha$ as in Theorem~\ref{theorem_parabolic_point}.
Let $J$ be the $2 \pi$-periodic integral of the system in a neighbourhood of
$\alpha$ such that 
$$J(0,0) = 0 \ \mbox{ and } \ \partial_{\tilde J}J
(0,0) > 0.$$ Then (the germ at $(0,0)$ of)
the real-analytic function $J = J(\tilde H, \tilde J)$ 
classifies the singularity up to
the real-analytic right-left symplectic equivalence. 
Moreover, 
the functions $\tilde H,\tilde J$ and the symplectic structure $\omega$ have the following local normal form near $\alpha$:
$$
\tilde H = u^2 - v^3 + \tilde \lambda v, \quad \tilde J = \tilde \lambda, \qquad
\omega = \textup{d}u \wedge \textup{d}v + \textup{d} J(\tilde H,\tilde J) \wedge \textup{d} \psi
$$
in some real-analytic coordinates $(u,v,\tilde J=\tilde\lambda, \psi) \in D^3 \times S^1$ on a neighbourhood of $\alpha$, with $\alpha=(0,0,0)\times S^1$.
\end{theorem}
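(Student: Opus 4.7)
The plan is to build the stated normal form directly from the Darboux chart of Theorem~\ref{theorem_parabolic_point} by using the Hamiltonian $S^1$-action generated by the periodic integral $J$ to propagate it along the whole orbit $\alpha$; the classification statement will then follow at once, since the normal form depends only on the germ $J=J(\tilde H,\tilde J)$.

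I would start by invoking Theorem~\ref{theorem_parabolic_point} at the parabolic point $P\in\alpha$ to obtain real-analytic canonical coordinates $(\tilde x,\tilde y,\tilde\lambda,\tilde\mu)$ in which $\tilde H,\tilde J$ already have the required polynomial form and $\omega=d\tilde x\wedge d\tilde y+d\tilde\lambda\wedge d\tilde\mu$. Because $J$ Poisson-commutes with $\tilde H$ and $\tilde J$, necessarily $J=J(\tilde H,\tilde J)$, and the hypothesis $\partial_{\tilde J}J(0,0)>0$ together with $d\tilde H(P)=0$ gives $dJ(P)=\partial_{\tilde J}J(0,0)\,d\tilde\lambda\ne 0$. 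After shrinking, $dJ$ is non-vanishing on a neighbourhood of $\alpha$, and the $2\pi$-periodic flow of $X_J$ defines a free real-analytic Hamiltonian $S^1$-action on a neighbourhood $V$ of $\alpha$ (freeness along $\alpha$ holds because $\alpha$ itself is a $2\pi$-orbit of $X_J$, and extends to a neighbourhood by semicontinuity of stabilisers).

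Next, I would take the local slice $\Sigma=\{\tilde\mu=0\}$ in the Darboux chart, which is transverse to $X_J$ at $P$ because $X_J(P)=\partial_{\tilde J}J(0,0)\,\partial_{\tilde\mu}$. For a sufficiently small $\Sigma$, the map $(q,\psi)\mapsto\phi^J_\psi(q)$ is a real-analytic diffeomorphism $\Sigma\times S^1\to V$. Using this trivialisation I would declare $(u,v,\tilde\lambda)$ to be the $J$-invariant extensions of $(\tilde x,\tilde y,\tilde\lambda)|_\Sigma$ and $\psi$ to be the angle coordinate with $X_J=\partial_\psi$ and $\psi|_\Sigma=0$. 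Since $\{J,\tilde H\}=\{J,\tilde J\}=0$, both $\tilde H$ and $\tilde J$ are $J$-invariant and therefore independent of $\psi$; as they agree on $\Sigma$ with $u^2-v^3+\tilde\lambda v$ and $\tilde\lambda$ respectively, they agree throughout $V$.

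The heart of the argument is the symplectic form. Writing $\omega=\alpha+\beta\wedge d\psi$ with $\alpha,\beta$ horizontal (no $d\psi$ component), the identity $\iota_{X_J}\omega=-dJ$ together with $X_J=\partial_\psi$ forces $\beta=-dJ$, so $\omega=\alpha+dJ\wedge d\psi$. The $S^1$-invariance $L_{X_J}\omega=d\iota_{X_J}\omega=0$ implies $\alpha$ is $J$-invariant, hence its coefficients in the frame $du,dv,d\tilde\lambda$ depend only on $(u,v,\tilde\lambda)$. Restricting to $\Sigma=\{\psi=0\}$, where $d\tilde\mu|_\Sigma=0$, gives $\omega|_\Sigma=d\tilde x\wedge d\tilde y|_\Sigma=du\wedge dv|_\Sigma$, so $\alpha|_\Sigma=du\wedge dv|_\Sigma$; by $\psi$-independence this forces $\alpha=du\wedge dv$ throughout $V$, yielding the required normal form $\omega=du\wedge dv+dJ(\tilde H,\tilde J)\wedge d\psi$. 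The classification is then immediate: two such singularities sharing the same germ $J(\tilde H,\tilde J)$ admit the same normal form and are therefore right-left symplectically equivalent, while any equivalence preserves the (uniquely determined) $\tilde H,\tilde J$ and the (sign-fixed) $2\pi$-periodic $J$, and so preserves the germ $J(\tilde H,\tilde J)$. The one genuinely global point in the argument is propagating the pointwise Darboux chart at $P$ to a saturated neighbourhood of the whole $\alpha$, and this is exactly what the free $S^1$-action of $J$ delivers—no Moser path is needed once the horizontal part of $\omega$ is seen to be both $S^1$-invariant and pinned down by its restriction to a single transversal slice.
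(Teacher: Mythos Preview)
Your argument is correct and follows essentially the same route as the paper: take the Varchenko--Givental canonical chart at a point $P\in\alpha$, use the slice $\{\tilde\mu=0\}$, and propagate $(\tilde x,\tilde y,\tilde\lambda)$ to $S^1$-invariant coordinates $(u,v,\tilde\lambda)$ via the flow of the $2\pi$-periodic integral $J$, with $\psi$ the flow time. Your explicit verification that $\omega=du\wedge dv+dJ\wedge d\psi$ (via $\iota_{X_J}\omega=-dJ$, $S^1$-invariance of the horizontal part, and restriction to the slice) spells out what the paper leaves implicit; the paper in addition presents a model-building construction (Hamiltonian gluing \`a la Lazutkin) to exhibit realizability of every germ $J$, but this is not needed for the classification statement itself.
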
 

\begin{proof}
The classification assertion follows directly from Theorem \ref {theorem_parabolic_point} and the existence of a $2\pi$-periodic first integral $J$ in a neighbourhood of a parabolic orbit, as we now show. 

First observe that since a $2\pi$-periodic integral $J$
is an action variable of the system and since
the conditions $J(0,0) = 0$ and $\partial_{\tilde J}J
(0,0) > 0$ determine 
$J = J(\tilde H, \tilde J)$ unambiguously, the germ of $J = J(\tilde H, \tilde J)$ at $(0,0)$ 
is indeed a symplectic invariant. 

To show that a real-analytic germ $J = J(\tilde H, \tilde J)$ at $(0,0)$  is the only  invariant, and it admits arbitrary values,
we now construct a `model' real-analytic integrable system with a parabolic orbit from such a germ.
Consider $\mathbb R^4$ with standard canonical coordinates
$(\tilde x,\tilde y, \tilde J = \tilde \lambda, \tilde \mu)$. Let 
$$\tilde H = \tilde{x}^2 - \tilde{y}^3 + \tilde{\lambda} \tilde{y} \mbox{ and } \tilde{J} = \tilde \lambda$$
and the function $T = T(\tilde x,\tilde y, \tilde \lambda)$ be defined by $T =  2\pi \partial_{\tilde J}J(\tilde H, \tilde J).$
Take a small disk $D^3$ in the $(\tilde x,\tilde y, \tilde \lambda)$-space around the origin and the cylinder $C$ over this
disk of height $T$:
$$
C = \{ (\tilde x,\tilde y, \tilde \lambda, \tilde \mu) \in \mathbb R^4 \mid 0 \le \tilde\mu \le T(\tilde x,\tilde y, \tilde \lambda) \}.
$$ 
Then the phase space $U$ of the desired integrable system is given as the quotient space
$$
C / \sim, \quad (\tilde x,\tilde y, \tilde\lambda,
0) \sim \Phi(\tilde x,\tilde y, \tilde \lambda, 
0),
$$
where $\Phi = \Phi(\tilde x,\tilde y, \tilde \lambda
)$ is the time-$2\pi$ map of the Hamiltonian flow of
$J$ on $C$. The quotient space naturally inherits a smooth structure, a symplectic structure $\omega / \sim$ (from the canonical symplectic
structure $\omega$ on $C$) and the functions $\tilde H$ and $\tilde{J}$ in involution (this follows by performing a general procedure of Hamiltonian gluing introduced by Lazutkin \cite[Chap.~I, Secs.~4 and~6]{Lazutkin1993}; this procedure can be applied in our case, since the Hamiltonian vector field generated by $J$ is transversal to $D^3$ and $\Phi|_{D^3}$ preserves $\omega|_{D^3}$).  By the construction,
$$(C / \sim, F, \omega / \sim), \quad  F = (\tilde H, \tilde{J}),$$
is an integrable system with a parabolic orbit $\tilde x = \tilde y = \lambda = 0$. Moreover, every real-analytic parabolic 
singularity
is locally right-left equivalent to such a model:  it suffices to cut out an embedded disk $D^3$ transversal to the parabolic orbit, given by $\tilde \mu = 0$ in
canonical coordinates as in Theorem~\ref{theorem_parabolic_point}, to obtain a cylinder $C$ as above.

To prove the local normal form, it suffices to define new coordinates $u,v,\psi$ as follows. Consider coordinates
$\tilde x, \tilde y, \tilde \lambda, \tilde \mu$ as in Theorem~\ref{theorem_parabolic_point}.
On the embedded disk $D^3$ given by $\tilde\mu = 0$, the new coordinates $u,v$ simply coincide with $\tilde x, \tilde y$. These coordinates $u,v$ on $D^3$ are then extended to a neighbourhood 
of the parabolic orbit $\alpha$ using the flow of the $2\pi$-periodic integral $J$, by making them invariant under this flow. The remaining angle coordinate $\psi$
is simply the time needed to reach a given point from $D^3$ using the flow of $J$.
\end{proof}

\renewcommand*{\proofname}{Proof}

\section{Symplectic invariants of integrable systems arising via an integrable Hamiltonian Hopf bifurcation} \label{sec/HamHopf}

The goal of this section is to show that for a certain range of the bifurcation parameter, the only symplectic invariants of the integrable Hamiltonian Hopf bifurcation are the action variables.  

Assume that we are given an integrable 2 degree of freedom Hamiltonian system
$$F = (H,J) \colon M \to \mathbb R^2$$
with (only) the following singularities

\begin{itemize}
\item[i)] two families of elliptic orbits joining at an elliptic-elliptic point,

\item[ii)] a family of hyperbolic orbits with an oriented separatrix diagram joining the two elliptic families at two parabolic orbits.
\end{itemize}

We shall moreover assume that 

\begin{itemize}
\item[iii)] the fibers of $F$ are compact,
\end{itemize}
and, by changing the functions $H$ and $J$ if necessary, that

\begin{itemize}
\item[iv)] $J$ generates a global Hamiltonian circle action that is free outside the elliptic-elliptic point.
\end{itemize}

We note that the assumptions i)--iii) guarantee  the existence of such a Hamiltonian action: this is well-known on a neighbourhood of the family of hyperbolic fibres, and follows from \cite{KudryavtsevaMartynchuk2021} on a neighbourhood of the closure of this family, see \cite[Proposition 5]{Efstathiou2012} for the real-analytic case.

Under these assumptions, the bifurcation diagram is topologically a triangle in $\mathbb R^2$; see Figure~\ref{figure/qsp}. The closure of the region
enclosed by this triangle is usually referred to as a \textit{flap} \cite{Efstathiou2012}. Every value of $F$ that is in the interior of the flap lifts to a union of two tori and every value in the exterior lifts  to a single torus. This is a typical situation in the case of the 
integrable subcritical Hamiltonian Hopf bifurcation \cite{vanderMeer1985} 
(see also \cite{DullinPelayo2016, Hohloch2021}).

\begin{lemma} \label{lemma1}
Consider two integrable Hamiltonian systems 
$$F_1 \colon M_1 \to \mathbb R^2 \ \mbox{ and } \ F_2 \colon M_2 \to \mathbb R^2$$
satisfying the above assumptions {\rm i)--iv)}
and the associated (singular) Lagrangian torus fibration $\mathcal F_i
\colon M^2_i \to B_i,$ where $B_i$ is the bifurcation complex of $F_i.$
Suppose that there exists a diffeomorphism\footnote{In the sense of footnote 3.}
$$\phi \colon B_1 \to B_2$$ preserving the integer affine structures.
Furthermore, suppose
that $\phi$ lifts to a fibration-preserving symplectomorphism of neighbourhoods  of the fibers containing the parabolic orbits (the cuspidal tori).  
Then $\phi$ can be lifted to a fibration-preserving symplectomorphism $\Phi \colon M^2_1 \to M^2_2.$
\end{lemma}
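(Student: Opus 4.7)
The plan is to reduce the claim to a parametric version of Theorem~\ref{Theorem/symplectic_1dof} by exploiting the global Hamiltonian $S^1$-action generated by $J$ (assumption iv), and to use the hypothesized local symplectomorphism near the cuspidal tori as prescribed boundary data at the two parabolic values of $J$. For every regular value $c$ of $J$ different from the value of $J$ at the elliptic--elliptic point, Marsden--Weinstein reduction yields a smooth symplectic surface $(M_{i,c}^{\mathrm{red}}, \omega_{i,c}^{\mathrm{red}})$ carrying a reduced Hamiltonian $H_{i,c}^{\mathrm{red}}$. Inside the $J$-range of the flap, $H_{i,c}^{\mathrm{red}}$ is a simple Morse function with one minimum, one saddle, and one maximum, corresponding respectively to the two elliptic families and to the family of hyperbolic orbits; outside this range the saddle disappears. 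The Reeb graph of $H_{i,c}^{\mathrm{red}}$ is exactly the slice $\{J=c\}\cap B_i$ of the bifurcation complex, and its action variable agrees, up to an integer affine transformation, with the restriction of the transverse action $I_i$ to this slice.

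Since $\phi$ preserves the integer affine structure, it descends for each such $c$ to a Reeb-graph diffeomorphism $\phi_c$ preserving the reduced action variables. The parametric version of Theorem~\ref{Theorem/symplectic_1dof} (stated in the remark following it) then yields a smooth family of fiberwise symplectomorphisms $\Phi_c^{\mathrm{red}}\colon M_{1,c}^{\mathrm{red}} \to M_{2,c}^{\mathrm{red}}$ on any open $J$-interval bounded away from the two parabolic values and from the elliptic--elliptic value. Lifting this family $S^1$-equivariantly, via action--angle coordinates for the $J$-action (equivalently, using a choice of local section of the circle bundle), produces a fibration-preserving symplectomorphism $\Phi$ between open saturated subsets of $M_1$ and $M_2$. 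Such an $S^1$-equivariant lift is unique up to the gauge freedom of composing with the time-$f$ flow of $J$, where $f$ is any smooth function on the base $B_1$.

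It remains to extend $\Phi$ across the three singular strata: the elliptic--elliptic fibre, and each of the two cuspidal tori. Across the elliptic--elliptic point a lift exists by Eliasson's local symplectic normal form together with the hypothesis that $\phi$ preserves the integer affine structure. Across each cuspidal torus the required lift is furnished by the hypothesis of the lemma. The remaining task is to adjust these local lifts, via the gauge freedom above together with the fiberwise $\mathbb{Z}^2$-gauge freedom on regular Liouville tori, so that they agree with $\Phi$ on the overlaps.

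The main obstacle is this gluing step. Over an overlap consisting of regular tori, two $F$-preserving symplectomorphisms differ by a fiberwise affine automorphism encoded by two smooth angle-shift functions on the base, and the question becomes whether these shifts can be interpolated smoothly up to and across the singular strata. At the smooth portion of the boundary of the flap and at the elliptic--elliptic vertex this is a partition-of-unity argument in action variables. The delicate case is at the two parabolic values of $J$, where the reduced Morse picture collapses and smoothness of the gluing functions is threatened; this is precisely where the hypothesized local lift near the cuspidal tori enters the argument, ensuring that the gauge-fixing data admit smooth extensions across the parabolic strata and thereby closing the construction.
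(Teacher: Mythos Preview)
Your proposal uses the same two main ingredients as the paper---the parametric one-degree-of-freedom classification (Theorem~\ref{Theorem/symplectic_1dof}) and Eliasson's normal form at the elliptic--elliptic point---but organizes them differently, and that difference matters. The paper's argument \emph{starts} from the hypothesized lift $\Phi_1$ on a neighbourhood $U$ of the cuspidal tori and then invokes the parametric Theorem~\ref{Theorem/symplectic_1dof} to extend $\Phi_1$ outward (after adjusting it near $\partial U$) to a symplectomorphism $\Phi_2$ on all of $M_1\setminus V_1$, where $V_1$ is a small ball around the elliptic--elliptic point. This leaves a \emph{single} gluing step, at the elliptic--elliptic point, handled by writing $\Psi^{-1}\circ\Phi_2$ (with $\Psi$ the Eliasson normal-form map) as the time-$1$ flow of a generating function $S$ on the base and replacing $S$ by $\chi S$ for a bump function $\chi$.

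By contrast, you build the symplectomorphism first on the regular $J$-range and then face three separate gluing problems: one at each parabolic $J$-value and one at the elliptic--elliptic value. You yourself identify the parabolic gluings as ``the main obstacle'', and your last paragraph asserts rather than proves that the hypothesized local lift ``ensures that the gauge-fixing data admit smooth extensions across the parabolic strata''. In the paper's ordering this obstacle simply does not arise: the cuspidal lift is the seed for the extension, not a patch to be glued in afterwards. So your route is workable in principle but leaves its hardest step unexecuted, whereas the paper's organization sidesteps it entirely. (A minor point: the ``fiberwise $\mathbb Z^2$-gauge freedom'' you mention is a discrete choice of cycle basis and plays no role in smooth interpolation; the relevant continuous freedom is translation along fibres by the Hamiltonian flow of a smooth function on the base, which is precisely what the paper's generating-function argument exploits.)
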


\begin{proof}
Let us denote a lift of $\phi$ to a neighbourhood $U$  of the cuspidal tori by $\Phi_1$. Such a lift exists by the assumption. Using the parametric version of   Theorem~\ref{Theorem/symplectic_1dof}, after possibly changing the symplectomorphism  $\Phi_1$ near the boundary of $U$, it can be extended to a symplectomorphism $\Phi_2 \colon M^2_1 \setminus V_1 \to M^2_2 \setminus V_2$, where $V_1$ and $V_2$ are arbitrary small balls containing the elliptic-elliptic points of the two systems.

By the Eliasson theorem, there exists a fibration preserving symplectomorphism $\Psi$ between neighbourhoods $W_1$ and $W_2$ of the elliptic-elliptic points. Without loss of generality, this symplectomorphism is a lift of $\phi$ and the neighbourhood $W_1$ contains the closure $\overline{V_1}$ of $V_1$. On the set $W_1 \setminus V_1$ we therefore get a symplectomorphism $\Psi^{-1} \circ \Phi_2$ preserving $F_1$. It can be shown that this symplectomorphism arises as the time one map for the Hamiltonian flow of a `generating' function $S \colon F_1(W_1 \setminus V_1) \to \mathbb R.$ Replacing 
$S$ with a new function $\chi S$, where $\chi$ is a smooth bump function that is $0$ on $F(\partial W_1)$ and $1$ on $F(\partial V_1)$, gives rise to a fibration-preserving symplectomorphism defined on $W_1 \setminus V_1$ and interpolating between $\Psi$ and $\Phi_2$.
\end{proof}

It is known that in the analytic case, the only semi-local invariants of cuspidal tori are the action variables \cite{Bolsinov2018}. But Theorem~\ref{theorem/parabolic_classification_c} implies that the same is true in the $C^{\infty}$ category.
Combining this with 
Lemma~\ref{lemma1}, we arrive at the following main result.

\begin{theorem} \label{theorem/main2}
Consider two integrable Hamiltonian systems 
$$F_1 \colon M_1 \to \mathbb R^2 \ \mbox{ and } \ F_2 \colon M_2 \to \mathbb R^2$$
satisfying the above assumptions {\rm i)--iv)}
and the associated (singular) Lagrangian torus fibrations $\mathcal F_i \colon M^2_i \to B_i,$ where $B_i$ is the bifurcation complex of $F_i$. Suppose
that there exists a diffeomorphism 
$$\phi \colon B_1 \to B_2$$ preserving the integer affine structures. 
Then $\phi$ can be lifted to a fibration-preserving symplectomorphism $\Phi \colon M^2_1 \to M^2_2.$
\hfill $\square$
\end{theorem}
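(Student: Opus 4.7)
The plan is to deduce Theorem~\ref{theorem/main2} by combining three ingredients already developed in the paper: the local symplectic classification of parabolic orbits (Theorem~\ref{theorem/parabolic_classification_c}), its semi-local strengthening to cuspidal tori (as sketched in Remark~\ref{rem:affine}), and the gluing procedure of Lemma~\ref{lemma1}.

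First, I would reduce the problem, via Lemma~\ref{lemma1}, to constructing fibration-preserving symplectomorphisms on saturated neighbourhoods of the two singular fibers containing the parabolic orbits (the two cuspidal tori). Once such local semi-local lifts of $\phi$ are in hand, Lemma~\ref{lemma1} takes care of everything else: along the hyperbolic family one uses the parametric version of Theorem~\ref{Theorem/symplectic_1dof} after symplectic reduction by the global $S^1$-action from assumption iv), near the elliptic-elliptic point one uses the Eliasson normal form, and on the overlap regions the discrepancy between two fiberwise symplectomorphisms is cut off using a bump function supported in the base.

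Next, for each cuspidal torus separately, I would promote Theorem~\ref{theorem/parabolic_classification_c} from a local statement near the parabolic orbit to a semi-local statement on a saturated neighbourhood. The idea is: because $\phi$ preserves the integer affine structures on both open strata of the local bifurcation complex, it matches the $J$-action and the swallow-tail action $I^\circ$ (up to $\mathrm{SL}(2,\mathbb Z)$), so Theorem~\ref{theorem/parabolic_classification_c} already provides a lift $\Phi_1$ in a small tubular neighbourhood $V_1\simeq D^3\times S^1$ of the parabolic orbit $\alpha_1$. To extend $\Phi_1$ to the whole cuspidal torus, I would use the free Hamiltonian $S^1$-action generated by $J$ (assumption iv)): transporting $\Phi_1$ by time-$t$ flows, and averaging if needed to make it $S^1$-equivariant, produces the required semi-local symplectomorphism on a saturated neighbourhood.

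The main obstacle lies in this second step, namely in promoting the local symplectic classification at a parabolic orbit to a semi-local one near the whole cuspidal torus in the $C^\infty$ category (where, unlike the analytic case treated in \cite{Bolsinov2018}, Hartogs-type extension tools are unavailable). The key technical point is to ensure that the $S^1$-equivariant extension of $\Phi_1$ agrees, on the complement of $V_1$, with action-angle coordinates induced from $\phi$ on the regular parts of the bifurcation complex; this is where preservation of the \emph{integer affine structure} (as opposed to merely matching the individual action variables on one stratum) is essential, since it guarantees compatibility of the action variables $(I^\circ,J)$ on the swallow-tail domain and $(I,J)$ on the other stratum simultaneously. Once this compatibility is established, the rest of the argument reduces to the standard patching of Lemma~\ref{lemma1}, and the global lift $\Phi \colon M_1\to M_2$ follows.
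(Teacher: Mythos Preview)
Your proposal is correct and follows essentially the same approach as the paper: the paper's proof of Theorem~\ref{theorem/main2} consists precisely of invoking the semi-local $C^\infty$ classification of cuspidal tori (obtained from Theorem~\ref{theorem/parabolic_classification_c} as indicated in Remark~\ref{rem:affine}) and then applying Lemma~\ref{lemma1}. Your outline adds a bit more detail on the passage from the parabolic orbit to the full cuspidal torus via the $S^1$-action, which the paper leaves implicit, but the overall structure is the same.
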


\begin{remark}
We note that a similar result holds in the local case of integrable subcritical Hamiltonian Hopf bifurcation \cite{vanderMeer1985}, regarded as a 3 degree of freedom integrable system in a neighbourhood of a rank one singular orbit. More specifically, fixing a small positive bifurcation parameter, one gets the same bifurcation diagram as in 
Theorem~\ref{theorem/main2}, but now $F_i^{-1}(f)$ is a union 
$T^2 \sqcup C^2$ of a 2-torus and a 2-cylinder if $f$ is in the interior of the flap and 
a 2-cylinder $C^2$ if $f$ is in the complement of the flap. Also in this case, every diffeomorphism 
$\phi \colon B_1 \to B_2$ preserving the integer affine structures in the flaps
can be lifted to a fibration-preserving symplectomorphism. 
\end{remark}

Thus, by Theorem~\ref {theorem/main2}, a complete symplectic invariant on a small neighbourhood of a flap is given by (germs of) integer affine structures on 
two open strata of the bifurcation complex. These integer affine structures are defined by the action variables $(I^\circ, J)$ on the flap (and on the corresponding stratum of the bifurcation complex) and by (multi-valued) action variables $(I, J)$ on the other open stratum called the {\em flap base} \cite{Efstathiou2012}.
The $(I^\circ, J)$-image of the flap is shown in Fig.~\ref{fig/affine}, top right, and has the form 
$$
\{(I^\circ,\lambda)\mid \max\{0,\lambda\} \le I^\circ \le S_2(\lambda) - S_1(\lambda),\ \lambda\in[\lambda_1,\lambda_2]\};
$$ 
the $(\widehat I, J)$-image of the flap base cut
along the curve $s=\{J=0,\ I<S_1(0)\}$ is shown in Fig.~\ref{fig/affine}, bottom right (here $\widehat I$ denotes a single-valued branch of the multi-valued action variable $I$ on the cutted flap base), and has the form 
$$
\mathbb R^2\setminus
\{(I,\lambda)\mid \max\{0,\lambda\}+S_1(\lambda)\le I\le S_2(\lambda),\ \lambda\in[\lambda_1,\lambda_2]\},
$$ 
for some continuous functions $S_1(\lambda)$ and $S_2(\lambda)$ such that $S_1(\lambda_1)=S_2(\lambda_1)$, 
$\lambda_2+S_1(\lambda_2)=S_2(\lambda_2)$, $\max\{0,\lambda\}+S_1(\lambda)<S_2(\lambda)$ for $\lambda_1<\lambda<\lambda_2$. Here $\alpha_1,\alpha_2$ are the parabolic orbits and $c$ is the elliptic-elliptic point, $J(\alpha_i)=\lambda_i$, $\lambda_1<J(c)=0<\lambda_2$.
One can show (see Remark \ref {rem:affine}) that, in the real-analytic case, $S_i(\lambda)$ are $C^1$-smooth on $[\lambda_1,\lambda_2]$ and $S_i''(\lambda)\to+\infty$ as $\lambda\to\lambda_i$.

\section{Acknowledgement}

The authors are grateful to Alexey Bolsinov for valuable remarks and discussions which improved the original
version of the article.  The work of E.K.\ was supported by Russian Science Foundation (grant No.~17-11-01303).

\bibliographystyle{amsplain}
\bibliography{./library.bib}

\end{document}